\theoremstyle{plain}
    \newtheorem{theorem}{Theorem}[section]
    \newtheorem{lemma}[theorem]{Lemma}
    \newtheorem{corollary}[theorem]{Corollary}
    \newtheorem{proposition}[theorem]{Proposition}
 \theoremstyle{definition}
    \newtheorem{definition}[theorem]{Definition}
    \newtheorem{example}[theorem]{Example}
    \newtheorem{remark}[theorem]{Remark}
\theoremstyle{remark}
\numberwithin{equation}{section}
\DeclareMathOperator{\Ad}{Ad}
\DeclareMathOperator{\ind}{index}
\DeclareMathOperator{\End}{End}
\DeclareMathOperator{\Hom}{Hom}
\DeclareMathOperator{\reg}{reg}
\DeclareMathOperator{\rank}{rank}
\DeclareMathOperator{\Spin}{Spin}
\DeclareMathOperator{\SO}{SO}
\DeclareMathOperator{\U}{U}
\DeclareMathOperator{\Cl}{Cl}
 \DeclareMathOperator{\Zeroes}{Zeroes}
    \DeclareMathOperator{\MZ}{MZ}
      \DeclareMathOperator{\DInd}{D-Ind}
\begin{document}

\newcommand{\Spinc}{\Spin^c}

    \newcommand{\R}{\mathbb{R}}
    \newcommand{\C}{\mathbb{C}} 
    \newcommand{\N}{\mathbb{N}}
    \newcommand{\Z}{\mathbb{Z}} 
    \newcommand{\Q}{\mathbb{Q}}
    \newcommand{\bK}{\mathbb{K}}

\newcommand{\g}{\mathfrak{g}}
\newcommand{\h}{\mathfrak{h}}
\newcommand{\p}{\mathfrak{p}}
\newcommand{\kg}{\mathfrak{g}} 
\newcommand{\kt}{\mathfrak{t}}
\newcommand{\kA}{\mathfrak{A}}
\newcommand{\XX}{\mathfrak{X}}
\newcommand{\kh}{\mathfrak{h}} 
\newcommand{\kp}{\mathfrak{p}}
\newcommand{\kk}{\mathfrak{k}}

\newcommand{\cE}{\mathcal{E}}
\newcommand{\cA}{\mathcal{A}}
\newcommand{\calL}{\mathcal{L}}
\newcommand{\calH}{\mathcal{H}}
\newcommand{\cO}{\mathcal{O}}
\newcommand{\cB}{\mathcal{B}}
\newcommand{\cK}{\mathcal{K}}
\newcommand{\cP}{\mathcal{P}}
\newcommand{\calD}{\mathcal{D}}
\newcommand{\cF}{\mathcal{F}}
\newcommand{\cX}{\mathcal{X}}
\newcommand{\cM}{\mathcal{M}}
\newcommand{\cS}{\mathcal{S}}
\newcommand{\cU}{\mathcal{U}}

\newcommand{\Sj}{ \sum_{j = 1}^{\dim G}}
\newcommand{\Sk}{ \sum_{k = 1}^{\dim M}}
\newcommand{\ii}{\sqrt{-1}}

\newcommand{\ddt}{\left. \frac{d}{dt}\right|_{t=0}}

\newcommand{\cSM}{\cS}
\newcommand{\PM}{P}
\newcommand{\DM}{D}
\newcommand{\LM}{L}
\newcommand{\vM}{v}

\newcommand{\Wedge}{\Lambda}

\newcommand{\specialin}{\hspace{-1mm} \in \hspace{1mm} }

\newcommand{\beq}[1]{\begin{equation} \label{#1}}
\newcommand{\eeq}{\end{equation}}
\newcommand{\bspl}{\[ \begin{split}}
\newcommand{\espl}{\end{split} \]}

\newcommand{\Utilde}{\widetilde{U}}
\newcommand{\Btilde}{\widetilde{B}}
\newcommand{\Dtilde}{\widetilde{D}}
\newcommand{\Etilde}{\widetilde{\cE}}

\newcommand{\Rhat}{\widehat{R}}

\title{An equivariant index for proper actions II: properties and applications}

\author{Peter Hochs\footnote{University of Adelaide, \texttt{peter.hochs@adelaide.edu.au}} \hspace{1mm}and Yanli Song\footnote{University of Toronto, \texttt{songyanl@math.utoronto.ca}}}

\date{\today}

\maketitle

\begin{abstract}
In the first part of this series, we defined an equivariant index without assuming the group acting or the orbit space of the action to be compact. This allowed us to generalise an index of deformed Dirac operators, defined for compact groups by Braverman. 
In this paper, we investigate properties and applications of this index.
We prove that it has an induction property that can be used to deduce various other properties of the index.
In the case of compact orbit spaces, we show how it is related to the analytic assembly map from the Baum--Connes conjecture, 
 and an index used by Mathai and Zhang. We use the index to define a notion of $K$-homological Dirac induction, and show that, under conditions, it satisfies the quantisation commutes with reduction principle.
\end{abstract}

\tableofcontents


\section{Introduction}

In part I of this series, an equivariant index was developed that applies to actions by possibly noncompact groups, and with possibly noncompact orbit spaces. To recall the definition of this index, we let $G$ be an almost connected Lie group (i.e.\ having finitely many connected components), acting properly and isometrically on a Riemannian manifold $M$. Let $\cE = \cE^+ \oplus \cE^-$ be a $\Z_2$-graded, Hermitian, $G$-equivariant vector bundle. Let $D$ be an odd, self-adjoint, $G$-equivariant, first order differential operator on $\cE$. If $M$ and $G$ are compact, then we have the usual equivariant index
\[
\ind_G(D) := [\ker D^+] - [\ker D^-] \quad \in R(G),
\]
where $D^{\pm}$ is the restriction of $D$ to $\Gamma^{\infty}(\cE^{\pm})$, and $R(G)$ is the representation ring of $G$.

The definition of the equivariant index has been generalised to cases where either the orbit space $M/G$ or the group $G$ is compact. If $M/G$ is compact, then one has the analytic assembly map from the Baum--Connes conjecture \cite{Connes94} for elliptic operators, and Kasparov's index of transversally elliptic operators \cite{Kasparov14}. If $G$ is compact, then Braverman \cite{Braverman02} defined an index of a natural class of deformed Dirac operators. This, and equivalent indices, has been used very successfully in geometric quantisation \cite{HochsSong15, Zhang14, Paradan11}.

The techniques used in the cases where $M/G$ or $G$ are compact are very different. If $M/G$ is compact, one can use operator algebraic techniques to obtain an index in the $K$-theory or $K$-homology of a $C^*$-algebra related to the group $G$. If $G$ is compact, it is natural to define an index in the completed representation ring
\[
\hat R(G) := \Bigl\{ \bigoplus_{\pi \in \hat G} m_{\pi} \pi; m_{\pi} \in \Z \Bigr\}.
\]

Because of the different approaches in the two cases, it is not immediately clear how to construct a common generalisation, i.e.\ an equivariant index that can be used when \emph{both} $M/G$ and $G$ are noncompact. This was done in \cite{HochsSong16a}, where the condition of $D$ being \emph{$G$-Fredholm} was introduced, which implies that $D$ has a \emph{$G$-index} 
\[
\ind_G(D) \in KK(C_0(G/K)\rtimes G, \C).
\]
Here $K<G$ is a maximal compact subgroup.
This $K$-homology group of the crossed product $C_0(G/K)\rtimes G$ can be identified with $\hat R(K)$ via the Morita equivalence $C_0(G/K)\rtimes G \sim C^*K$. The main result in \cite{HochsSong16a} is  that a natural class of deformed Dirac operators is $G$-Fredholm. This completes Table \ref{table intro}, by filling in the bottom-right entry.
\begin{table} \label{table intro}
\begin{tabular}{|c|c|c|}
\hline 
 & \begin{tabular}{l}$M/G$ compact,\\ $D$ transversally elliptic \end{tabular} & \begin{tabular}{l}$M/G$ noncompact, \\ $D$ a deformed Dirac operator \end{tabular}\\
\hline
$G$ compact & Atiyah, 1974 \cite{Atiyah74} & Braverman, 2002 \cite{Braverman02}\\
\hline
$G$ noncompact & Kasparov, 2015 \cite{Kasparov14} & Part I, 2016 \cite{HochsSong16a} \\
\hline
\end{tabular}
\caption{Special cases of the $G$-index}
\end{table}
As far as the authors are aware, the $G$-index is the first equivariant index that applies in cases where both $M/G$ and $G$ are noncompact. Here by an equivariant index, we mean an index taking values in an object defined purely in terms of the group acting. 

In the present paper, we study properties of the $G$-index of deformed Dirac operators. We start by proving an induction property of the index. This is an explicit description of the image of the $G$-index in $\hat R(K)$ in terms of data on a global, $K$-invariant slice $N\subset M$ such that $M = G\times_K N$. Besides giving a better understanding of the $G$-index, the induction property can also be used to prove various other properties of  it.

One such property is a relation with the analytic assembly map from the Baum--Connes conjecture \cite{Connes94} if $M/G$ is compact. If $G$ is semisimple with discrete series representations, then it turns out that the assembly map can be recovered directly from the $G$-index. Another application of the induction result is a \emph{quantisation commutes with reduction} property of the $G$-index. This generalises the main result in \cite{HochsSong15}, where $G$ was assumed to be compact.

Independently of the induction result, we give a second relation between the $G$-index and the assembly map, and an index defined by Mathai and Zhang \cite{Mathai10}, if $M/G$ is compact. But our main interest is in cases where both $M/G$ and $G$ are noncompact, which are furthest removed from existing index theory. One example of this setting is the action by $G$ on $T^*(G/K)$. In that case, we use the $G$-index to define a version of the Dirac induction isomorphism from the Connes--Kasparov conjecture, which is now defined on $\hat R(K)$.

\subsection*{Acknowledgements}

The authors thank Maxim Braverman, Nigel Higson, Gennadi Kasparov, 
 Mathai Varghese and Guoliang Yu for interesting and helpful discussions. The first author was supported by the European Union, through Marie Curie fellowship PIOF-GA-2011-299300.

\section{Preliminaries}

Let $G$ be an almost connected Lie group, i.e.\ having finitely many connected components. We recall the definition of $G$-Fredholm operators and their $G$-indices, as introduced in part I of this series \cite{HochsSong16a}. We also state the main result from \cite{HochsSong16a}, that deformed Dirac operators are $G$-Fredholm. Finally, we recall the notion of Dirac induction used in the Connes--Kasparov conjecture, which will be used in some of the applications.

\subsection{The $G$-index} \label{sec def index}

For the rest of this paper, we fix a maximal compact subgroup $K<G$, and a proper, isometric action by $G$
 on a  Riemannian manifold $M$. Let $\cE = \cE^+ \oplus \cE^- \to M$ be a $\Z_2$-graded, Hermitian vector bundle. Suppose the action by $G$ lifts to $\cE$, preserving the grading and the Hermitian metric.

Since the action is proper, Abels' theorem \cite{Abels} guarantees the existence of a smooth, equivariant map
\[
p\colon M\to G/K.
\]
Equivalently, we have a $G$-equivariant diffeomorphism
\beq{eq M G K}
M\cong G\times_K N
\eeq
via the action map, with $N := p^{-1}(eK)$.
Pullback along $p$ defines a $G$-equivarant map
\[
p^*\colon C_0(G/K)\to C_b(M).
\]
This induces a $*$-homomorphism
\[
p^*_G\colon C_0(G/K)\rtimes G \to C_b(M) \rtimes G
\]
between crossed-product $C^*$-algebras \cite{Williams07}. 

The representation by $C_b(M)$ in $L^2(\cE)$ by pointwise multiplication, and the unitary representation of $G$ in $L^2(\cE)$ combine to a representation
\[
\pi_{G, C_b(M)}\colon C_b(M)\rtimes G\to \cB(L^2(\cE)).
\]
Conider the representation
\[
\pi^p_{G, G/K}:= \pi_{G, C_b(M)} \circ p^*_G\colon C_0(G/K)\rtimes G\to \cB(L^2(\cE)).
\]
It is explicitly given by
\[
\bigl(\pi_{G, G/K}^p (\varphi)s\bigr)(m) = \int_G \varphi(g, p(m)) g\cdot (s(g^{-1}m))\, dg,
\]
for $\varphi \in C_c(G, C_0(G/K))$, $s \in L^2(\cE)$ and $m\in M$.

Let $D$ be an odd, self-adjoint, $G$-equivariant, first order differential operator on $\cE$. In \cite{HochsSong16a}, the operator $D$ was defined to be \emph{$G$-Fredholm for $p$} if the triple
\beq{eq index cycle}
\Bigl( L^2(\cE), \frac{D}{\sqrt{D^2+1}}, \pi^p_{G, G/K} \Bigr)
\eeq
is a Kasparov $(C_0(G/K)\rtimes G, \C)$ module. Then the \emph{$G$-index} of $D$ is defined as the class
\[
\ind^p_G(D) \in KK(C_0(G/K)\rtimes G, \C)
\]
of \eqref{eq index cycle}. Via the Morita equivalence $C_0(G/K) \rtimes G \sim C^*K$, this index can be identified with an element of
\[
KK(C^*K, \C) = \hat R(K) := \Bigl\{ \bigoplus_{\pi \in \hat K} m_{\pi} \pi; m_{\pi} \in \Z \Bigr\}.
\]
If $D$ is $G$-Fredholm for any map $p$ as above, then it is called \emph{$G$-Fredholm}. Its $G$-index is then independent of $p$ (see Lemma 3.2 in \cite{HochsSong16a}), and denoted by $\ind_G(D)$.

If $D$ is transversally elliptic in the sense of Definition 6.1 in \cite{Kasparov14}, and $M/G$ is compact, then $D$ is $G$-Fredholm by Proposition 6.4 and Remark 8.19 in \cite{Kasparov14}. The $G$-index of $D$ is then a generalisation of Atiyah's index of elliptic operators \cite{Atiyah74} to noncompact groups. 

Another class of $G$-Fredholm operators is obtained by applying a natural deformation to Dirac-type operators.

\subsection{Deformed Dirac operators} \label{sec def Dirac}

Suppose that $M$ is complete in the given Riemannian metric. Let  
\[
c\colon TM \to \End(\cE)
\]
be a vector bunde endomorphism into the odd endomorphisms, such that for all $v\in TM$,
\[
c(v)^2 = -\|v\|^2. 
\]
Suppose that $c(g\cdot v) = g\circ c(v) \circ g^{-1}$ for all $g\in G$ and $v\in TM$.

Let $\nabla^{\cE}$ be a $G$-invariant Hermitian connection on $\cE$, such that for all vector fields $v, w \in \cX(M)$,
\[
[\nabla^{\cE}_v, c(w)] = c(\nabla^{TM}_v w),
\]
where $\nabla^{TM}$ is the Levi--Civita connection of the Riemannian metric. Then we have the \emph{Dirac operator}
\[
D := c\circ \nabla^{\cE}\colon \Gamma^{\infty}(\cE) \to \Gamma^{\infty}(\cE).
\]

Let $\psi\colon M\to \kg$ be a $G$-equivariant smooth map, with respect to the adjoint action by $G$ on the Lie algebra $\kg$. This map defines a vector field $v^{\psi}\in \cX(M)$ by
\[
v^{\psi}_m := \ddt \exp(-t\psi(m))\cdot m,
\]
for all $m \in M$. A key assumption we make  is that the set $\Zeroes(v^{\psi})\subset M$ of zeroes of $v^{\psi}$ is \emph{cocompact}, i.e.\ $\Zeroes(v^{\psi})/G$ is compact.
The \emph{Dirac operator deformed by $\psi$} is the operator
\[
D_{\psi} := D - \ii c(v^{\psi}).
\]

Given a real-valued function $\rho \in C^{\infty}(M)^G$, we call a nonnegative function $f\in C^{\infty}(M)^G$ \emph{$\rho$-admissible} if
\[
\frac{f}{\|df\| + f + 1} \geq \rho.
\]
Such functions exist for all $\rho$, see Lemma 3.10 in \cite{HochsSong16a}.

Suppose for now that $G = K$ is compact. A re-interpretation of Theorem 2.9 in \cite{Braverman02} is that there is a function $\rho \in C^{\infty}(M)^k$, such that for all $\rho$-admissible functions $f$, the operator $D_{f\psi}$ is $K$-Fredholm. Then its index
\[
\ind_K(D_{f\psi}) \in KK(C^*K, \C) =\hat R(K) 
\]
is the index studied in \cite{Braverman02} (see Lemma 2.9 in \cite{HochsSong16a}). It is independent of $f$ and $\nabla^{\cE}$.

The main result in \cite{HochsSong16a} is that this generalises to noncompact groups.
\begin{theorem} \label{thm def Dirac G Fred}
There is a function $\rho_G \in C^{\infty}(M)^G$ such that for all $\rho$-admissible functions $f$, the operator $D_{f\psi}$ is $G$-Fredholm for $p$.
\end{theorem}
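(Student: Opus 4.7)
To prove Theorem \ref{thm def Dirac G Fred}, I must verify that the triple $(L^2(\cE),\, D_{f\psi}/\sqrt{D_{f\psi}^2+1},\, \pi^p_{G,G/K})$ is a Kasparov $(C_0(G/K)\rtimes G,\C)$-module. In the unbounded picture, this amounts to two conditions: first, that $\pi^p_{G,G/K}(a)(1+D_{f\psi}^2)^{-1}$ is a compact operator on $L^2(\cE)$ for every $a$ in the dense subalgebra $C_c(G, C_0(G/K))$; and second, that $\bigl[D_{f\psi}/\sqrt{D_{f\psi}^2+1},\, \pi^p_{G,G/K}(a)\bigr]$ is compact on such $a$. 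The commutator condition is the lighter of the two: since $D_{f\psi}$ is $G$-equivariant and the pullback $p^*\phi$ of a smooth compactly supported $\phi\in C_c^\infty(G/K)$ has bounded differential, the commutator is essentially Clifford multiplication by $d(p^*\phi)$, integrated over a compact set in $G$, and a resolvent argument then gives compactness. The substantive task is local compactness of the resolvent.

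The key tool is a Bochner--Lichnerowicz type formula. Using $c(v)^2=-\|v\|^2$ and $v^{f\psi}=fv^\psi$, a direct computation yields
\[
D_{f\psi}^2 \;=\; D^2 + f^2\|v^\psi\|^2 \;-\; i\bigl\{D,\, c(fv^\psi)\bigr\}.
\]
The anticommutator expands, via the standard identity for a Dirac operator and Clifford multiplication by a vector field, into a first-order operator whose coefficients involve $df$, $v^\psi$, and $\nabla v^\psi$. The notion of a $\rho$-admissible function is designed precisely so that, for a suitable $G$-invariant function $\rho_G$ dominating the pointwise norms of $v^\psi$, $\nabla v^\psi$, and related quantities, this cross term is controlled in the quadratic-form sense by $\alpha(D^2 + f^2\|v^\psi\|^2 + 1)$ with $\alpha<1$. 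Consequently, $(1+D_{f\psi}^2)^{-1} \leq C(1+D^2 + f^2\|v^\psi\|^2)^{-1}$. Because $\Zeroes(v^\psi)$ is $G$-cocompact, $\|v^\psi\|$ is bounded below on the complement of any $G$-invariant neighborhood $U$ of $\Zeroes(v^\psi)$, so $(1+f^2\|v^\psi\|^2)^{-1}$ is uniformly small there, effectively localising the resolvent to a $G$-cocompact region.

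Using the diffeomorphism $M\cong G\times_K N$, the operator $\pi^p_{G,G/K}(a)$ for $a\in C_c(G, C_0(G/K))$ acts as convolution against a kernel compactly supported in $G$, combined with multiplication by a compactly supported function in the $G/K$-direction. Together with the resolvent localisation above, this reduces the compactness question to the classical fact that, over a relatively compact piece of $N$, multiplication by a smooth compactly supported function composed with the resolvent of an elliptic operator is compact (Rellich--Kondrachov, applied via the leading symbol of $D^2$). The main obstacle is to make the Bochner estimate and the choice of $\rho_G$ uniform in the $G$-direction despite the noncompactness of $G$: one needs a $G$-invariant construction of all auxiliary bounds and a careful control of the geometry transverse to the $G$-orbits, especially in a neighborhood of $\Zeroes(v^\psi)$ where the deforming term degenerates and the elliptic estimate for $D^2$ must carry the argument on its own.
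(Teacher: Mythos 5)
This theorem is not proved in the present paper at all: it is quoted from Part I (Theorem 3.12 of \cite{HochsSong16a}), so the comparison has to be with the argument there and with the compact-group analogue that this paper does prove in Subsection 3.2 (Proposition \ref{prop DT K Fred}). Your outline has the right frame (local compactness of $\pi^p_{G,G/K}(a)(D_{f\psi}^2+1)^{-1}$ plus the easier commutator condition, a Bochner-type expansion of $D_{f\psi}^2$, localisation away from $\Zeroes(v^\psi)$ using cocompactness), but it has a genuine gap at the analytic heart. The anticommutator $\{D, c(fv^\psi)\}$ is not a term ``whose coefficients involve $df$, $v^\psi$, and $\nabla v^\psi$'' that $\rho$-admissibility can dominate: it equals a zeroth-order Clifford term plus the genuinely first-order operator $2f\nabla^{\cE}_{v^\psi}$, whose coefficient $f\,v^{\psi}$ is unbounded. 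Admissibility of $f$ only controls $\|df\|$ relative to $f$; it gives no relative form bound of $2f\nabla^{\cE}_{v^\psi}$ by $\alpha\bigl(D^2+f^2\|v^\psi\|^2+1\bigr)$ with $\alpha<1$, and no choice of $\rho_G$ ``dominating pointwise norms'' makes such a bound true. The known route replaces $\nabla^{\cE}_{v^\psi}$ by the Lie derivative $\calL_{\psi}$ plus a bundle endomorphism; for a compact group $\calL_{\psi(n)}$ is bounded by $C_\lambda\|\psi(n)\|$ only on each isotypical component, which is exactly why Lemma \ref{callias} works component by component and why the Casimir resolvent identity \eqref{eq diff Omega} is then needed to reassemble compactness of $\pi_K(e)\bigl((D^T_{f\psi})^2+1\bigr)^{-1}$.

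For noncompact $G$ even that device is unavailable, since $L^2(\cE)$ has no isotypical decomposition with finite constants; this is precisely the new difficulty Part I addresses. There the estimates are organised around the splitting $D = D_{G/K}+D_N$ of \eqref{eq decomp D}, and compactness is obtained not for a localised resolvent by itself but only after composing with $\pi^p_{G,G/K}(a)$ for $a\in C_c(G, C_0(G/K))$, whose convolution in the $G$-variable provides the smoothing in the group direction that replaces the Casimir trick (cf.\ Lemma 4.6 and Proposition 5.2 of \cite{HochsSong16a}, which are the ingredients invoked in Subsection 3.1 and in the proof of Theorem \ref{thm induction}). In your sketch the element $a$ is used only to cut down supports, and the term $f\nabla^{\cE}_{v^\psi}$ (equivalently $f\calL_\psi$) is never actually controlled, so the claimed inequality $(1+D_{f\psi}^2)^{-1}\leq C(1+D^2+f^2\|v^\psi\|^2)^{-1}$, on which everything downstream rests, is unjustified. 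As written, the proposal does not close.
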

See Theorem 3.12 in \cite{HochsSong16a}. Furthermore, the $G$-index of $D_{f\psi}$ is independent of $p$, $f$, $\nabla^{\cE}$, and the Riemannian metric on $TM$, as stated precisely in Proposition 3.13 in \cite{HochsSong16a}.

In this paper, we study properties of the $G$-index of deformed Dirac operators, and relations with the analytic assembly map \cite{Connes94}.

\subsection{Dirac induction} \label{sec DInd}

In some of the applications, we will use the Dirac induction isomorphism from the Connes--Kasparov conjecture. We recall the definition of this isomorphism here.
 
Fix a $K$-invariant inner product on the Lie algebra $\kg$, and let $\kp \subset \kg$ be the orthogonal complement to $\kk$.  Suppose that the representation
\[
\Ad\colon K\to \SO(\kp)
\]
lifts to a homomorphism
\beq{eq tilde Ad}
\widetilde{\Ad}\colon K\to \Spin(\kp). 
\eeq
This lift always exists if one replaces $G$ by a double cover. (See the end of this subsection for the case where it does not exist.) Let $S_{\kp}$ be the spinor representation of $\Spin(\kp)$, see e.g.\ Definition 5.11 in \cite{Lawson89}. We view $S_{\kp}$ as a representation of $K$, via the map $\widetilde{\Ad}$. If $\kp$ is even-dimensional, then $S_{\kp}$ has a natural $\Z_2$-grading $S_{\kp} = S_{\kp}^+ \oplus S_{\kp}^-$. By the element $[S_{\kp}] \in R(K)$, we will mean $[S_{\kp}^+] - [S_{\kp}^-]$ in that case.

Existence of the lift \eqref{eq tilde Ad}  is equivalent to $G/K$ having a $G$-equivariant $\Spin$-structure, with spinor bundle $G\times_K S_{\kp} \to G/K$. We will then say that $G/K$ is equivariantly $\Spin$.

For any finite-dimensional representation space $V$ of $K$, we have the $G$-equivariant 
vector bundle
\[
\cE_V := G\times_K(S_{\kp}\otimes V) \to G/K.
\]
Let $\{X_1, \ldots, X_{\dim \kp}\}$ be an orthonormal basis of $\kp$. Consider the Dirac operator
\beq{eq def DV}
D_V := \sum_{j=1}^{\dim \kp} X_j \otimes c(X_j) \otimes 1_V
\eeq
on 
\[
\Gamma^{\infty}(\cE_V) \cong \bigl(C^{\infty}(G) \otimes S_{\kp}\otimes V \bigr)^K.
\]
Here $c$ denotes the Clifford action by $\kp$ on $S_{\kp}$.
It defines an equivariant  $K$-homology class
\beq{eq DV}
[D_V] \in KK_*^G(C_0(G/K), \C).
\eeq

The analytic assembly map from the Baum-Connes conjecture \cite{Connes94, Kasparov83} is a map
\beq{eq ass map}
\mu_Y^G\colon KK_*^G(C_0(Y), \C)\to KK_*(\C, C^*_rG)
\eeq
for any cocompact, proper $G$-space $Y$. Here $C^*_rG$ is now the reduced group $C^*$-algebra of $G$. There is also a version for the maximal group $C^*$-algebra. Applying this map for $Y=G/K$ to the class \eqref{eq DV} yields
\[
\mu_{G/K}^G[D_V] \in KK(\C, C^*_rG).
\]

The \emph{Dirac induction} map
\[
\DInd_K^G\colon R(K) \to KK_*(\C, C^*_rG)
\]
is defined by $[V] \mapsto \mu_{G/K}^G[D_V]$, for finite-dimensional representation spaces $V$ of $K$. It maps into even $KK$-theory if $G/K$ is even-dimensional, and into odd $KK$-theory otherwise. The Connes--Kasparov conjecture, proved by Chabert, Echterhoff and Nest \cite{Chabert03}, states that it is in isomorphism of Abelian groups. (This was proved for linear reductive groups by Wassermann \cite{Wassermann87}.)

By the universal coefficient theorem, we have
\[
KK_*(C^*_rG, \C) \cong \Hom_{\Z}\bigl( KK_*(\C, C^*_rG), \Z \bigr)
\]
via the Kasparov product. Pulling back along Dirac induction therefore defines an isomorphism of Abelian groups
\beq{eq DInd star}
(\DInd_K^G)^*\colon KK(C^*_rG, \C) \xrightarrow{\cong} \Rhat(K).
\eeq

If the lift \eqref{eq tilde Ad} does not exist, i.e.\ $G/K$ is not equivariantly $\Spin$, one still has  a Dirac induction isomorphism. Let $\pi\colon \tilde G \to G$ be a double cover for which \eqref{eq tilde Ad} exists, and let $\tilde K := \pi^{-1}(K)$. Let $u$ be the nontrivial element of $\ker \pi$. Set
\[
R_{\Spin}(K) := \{V \in R(\tilde K); \text{$u$ acts trivially on $V \otimes S_{\kp}$}\}.
\]
Then for $V\in R_{\Spin}(K)$, the tensor product $V \otimes S_{\kp}$ can be viewed as a (virtual) representation of $K$, and the above constructions apply. This yields an isomorphism
\[
\DInd_K^G\colon R_{\Spin}(K) \xrightarrow{\cong} KK_*(\C, C^*_rG).
\]


\section{Induction from slices} \label{sec induction}

We have seen in Theorem \ref{thm def Dirac G Fred} 
that deformed Dirac operators have well-defined $G$-indices. In the rest of this paper, we discuss properties of these indices. One useful tool is the induction result we prove in this section, Theorem \ref{thm induction}. It is a relation between the $G$-index of a deformed Dirac operator on $M$, and the $K$-index of an operator on a slice $N\subset M$. The latter index can be described explicitly in terms of the $L^2$-kernel of the operator.

We keep using the notation and assumptions of Subsection \ref{sec def Dirac}. We fix a smooth, equivariant map $p\colon M\to G/K$, and consider the corresponding slice $N = p^{-1}(eK)$. 


\subsection{The induction result} \label{sec ind thm}

Consider the restriction $\nabla^{\cE|_N}$ of $\nabla^{\cE}$ to $N$. Since $TN \subset TM|_N$, we have the Clifford action
\[
c_N\colon TN\otimes \cE|_N \to \cE|_N.
\]
These combine to the Dirac-type operator
\beq{eq def DN}
D^N\colon \Gamma^{\infty}(\cE|_N) \xrightarrow{\nabla^{\cE|_N}} \Gamma^{\infty}(TN\otimes \cE|_N)\xrightarrow{c_N}\Gamma^{\infty}(\cE|_N).
\eeq

As before, we fix a $K$-invariant inner product on $\kg$.
Let $\kp \subset \kg$ be the orthogonal complement to $\kk$. Then
\beq{eq g k p}
\kg = \kk \oplus \kp.
\eeq
We will identify
\beq{eq TMN}
TN \oplus N\times \kp \cong TM|_N
\eeq
via the map
\[
(v, (n, X))\mapsto v + X^M_n
\]
for $n\in N$, $v\in T_nN$ and $X\in \kp$.

Let $B$ be the given Riemannian metric on $M$. We will consider two $K$-invariant metrics on $TM|_N$. One is simply the restriction $B|_N$. The other is defined by the properties that the decomposition \eqref{eq TMN} is orthogonal, the metric equals $B|_{TN}$ on $TN$, and is defined by the inner product on $\kg$ on $N\times \kp$. We denote this second metric by $B_{\kp}$.

Choose a $K$-equivariant, isometric vector bundle isomorphism
\[
(TM|_N, B_{\kp}) \to (TM|_N, B|_N),
\]
which is the identity on $TN$.
Via this map, the Clifford action
\[
c|_N\colon (TM|_N, B|_N) \to \End(\cE)
\]
defines the Clifford action
\beq{eq cp}
c_{\kp}\colon (TM|_N, B_{\kp}) \to \End(\cE).
\eeq
We have $c_{\kp}|_{TN} = c|_{TN}$.

For a real-valued function $f\in C^{\infty}(M)^G = C^{\infty}(N)^K$, consider the deformed Dirac operator
\[
D^N_{f\psi|_N} = D^N - \ii c_{\kp}( v^{f\psi}|_N)
\]
on $\Gamma^{\infty}(\cE|_N)$. 
%
Under the additional assumption that $\psi(N)\subset \kk$, this is the Dirac operator on $\cE|_N$ deformed by $f\psi|_N$ as in Subsection \ref{sec def Dirac}, but we do \emph{not} make this assumption. (See Subsection \ref{sec psi N k} for other consequences of that assumption.) The analogue of Theorem \ref{thm def Dirac G Fred} still holds, however.
\begin{proposition} \label{prop DN K Fred}
There is a positive function $\rho_N \in C^{\infty}(N)^K = C^{\infty}(M)^G$ such that if $f$ is $\rho_N$-admissible, then $D^{N}_{f\psi|_N}$ is $K$-Fredholm.
\end{proposition}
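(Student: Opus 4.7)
The strategy is to mimic Braverman's original argument \cite{Braverman02} (equivalently, the $G=K$ case of Theorem \ref{thm def Dirac G Fred}) applied to $K$ acting on $N$, but with the twist that the taming vector field $v^{f\psi}|_N$ may have a component normal to $N$, and that the relevant Clifford action is $c_\kp$ rather than $c_N$. Accordingly I would not try to appeal to Theorem \ref{thm def Dirac G Fred} as a black box (since $v^{f\psi}|_N$ does not in general come from a $K$-equivariant map $N\to\kk$), but rather rework its proof.

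The first step is to record a useful decomposition. Since $K$ preserves $\kg = \kk \oplus \kp$, the components $\psi_{\kk}$ and $\psi_{\kp}$ of $\psi|_N\colon N\to \kg$ are both $K$-equivariant. Under the identification \eqref{eq TMN}, the vector field $v^{\psi}|_N$ splits as the sum of a tangential part $v^{\psi_\kk}\in \cX(N)$ and a normal part which, at $n\in N$, equals $-\psi_{\kp}(n)\in \kp$. In particular
\[
c_\kp(v^{f\psi}|_N) = c_N(v^{f\psi_\kk}) \;-\; f\,c_\kp(\psi_\kp),
\]
and the zero set of $v^{f\psi}|_N$ in $N$ coincides with $\Zeroes(v^\psi)\cap N$, which is $K$-cocompact in $N$ because $\Zeroes(v^\psi)$ is $G$-cocompact in $M\cong G\times_K N$.

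The heart of the argument is the Weitzenb\"ock-type estimate. Essential self-adjointness of $D^N_{f\psi|_N}$ on $\Gamma^\infty_c(\cE|_N)$ follows from completeness of $(N,B|_N)$, which in turn follows from completeness of $M$ together with $K$-cocompactness of the relevant level sets. Using $c_\kp(v)^2=-\|v\|_{B_\kp}^2$ for $v\in TM|_N$, a direct computation gives
\[
(D^N_{f\psi|_N})^2 \;=\; (D^N)^2 \;+\; \|v^{f\psi}\|_{B_\kp}^2 \;+\; A_{f,\psi},
\]
where $A_{f,\psi} := -\ii\,[D^N,\,c_\kp(v^{f\psi}|_N)]$ is a zeroth-order operator bounded pointwise by a continuous multiple of $\|df\|\,\|\psi\| + f\,(\|d\psi\|+\|\psi\|)$. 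I would then choose $\rho_N\in C^\infty(N)^K$ large enough that $\rho_N$-admissibility of $f$ forces $|A_{f,\psi}|\le \tfrac12 \|v^{f\psi}\|_{B_\kp}^2$ outside a $K$-cocompact neighbourhood of $\Zeroes(v^\psi)\cap N$, which is precisely the role played by admissibility in \cite{Braverman02} and in the proof of Theorem \ref{thm def Dirac G Fred}.

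From that point $K$-Fredholmness of $D^N_{f\psi|_N}$ follows exactly as in Braverman: outside a $K$-cocompact subset of $N$ one has $(D^N_{f\psi|_N})^2\ge C>0$, and a standard Rellich-type argument then shows that the triple
\[
\Bigl( L^2(\cE|_N),\; D^N_{f\psi|_N}\big/\sqrt{(D^N_{f\psi|_N})^2+1},\; \pi \Bigr),
\]
with $\pi$ the obvious representation of $C^*K$, is a Kasparov $(C^*K,\C)$-module. The main technical obstacle I anticipate is the bracket computation producing $A_{f,\psi}$: because $c_\kp$ on the normal directions is defined through the isometric identification of $(TM|_N,B_\kp)$ with $(TM|_N,B|_N)$ rather than through a direct Clifford compatibility with $\nabla^{\cE}$, one has to track carefully how $\nabla^{\cE|_N}$ interacts with $c_\kp$ on the normal factor and verify that the resulting zeroth-order error really obeys the pointwise estimate demanded by the admissibility condition.
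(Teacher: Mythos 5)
Your overall strategy---splitting $\psi|_N$ into $\psi_{\kk}$ and $\psi_{\kp}$ and treating the normal part as a Callias-type potential---is essentially the paper's route: Proposition \ref{prop DN K Fred} is deduced there from Proposition \ref{prop DT K Fred} with $T=\ii c_{\kp}(v^{\psi_{\kp}})$, and your displayed identity for $c_{\kp}(v^{f\psi}|_N)$ is Lemma \ref{lem DT DN}. But there is a genuine gap in the heart of your estimate. The cross term $A_{f,\psi}$ (the graded commutator of $D^N$ with $c_{\kp}(v^{f\psi}|_N)$) is \emph{not} a zeroth-order operator: the normal ($\psi_{\kp}$) contribution is indeed a bundle endomorphism, but the tangential contribution contains $-2\ii f\,\nabla^{\cE|_N}_{v^{\psi_{\kk}}}$, a genuine first-order term (this is the term $-2\ii f\nabla^{\cE|_N}_{v^{\psi}}$ in the local formula in the proof of Lemma \ref{callias}). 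It cannot be bounded pointwise by any expression in $\|df\|$, $f$, $\|\psi\|$, $\|d\psi\|$. It can only be controlled after restricting to a single $K$-isotypical component $L^2(\cE|_N)_{\lambda}$, by writing $\nabla_{v^{\psi}}=\calL_{\psi}+(\nabla_{v^{\psi}}-\calL_{\psi})$ and using that $\calL_{\psi(n)}$ acts on $V_{\lambda}$ with norm at most $C_{\lambda}\|\psi(n)\|$---a constant that depends on $\lambda$. Consequently your uniform bound $|A_{f,\psi}|\le \tfrac12\|v^{f\psi}\|^2_{B_{\kp}}$ outside a cocompact set cannot be arranged by any choice of $\rho_N$, the uniform positivity $(D^N_{f\psi|_N})^2\ge C>0$ off a cocompact set fails, and in fact $D^N_{f\psi|_N}$ is in general \emph{not} Fredholm in the classical sense: its $L^2$-kernel may be infinite-dimensional, with only the multiplicity of each irreducible $K$-type finite, which is precisely why the index lands in $\Rhat(K)$ rather than $R(K)$.

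For the same reason the final step needs more than a ``standard Rellich-type argument''. What can be proved, and what the paper proves, is compactness of $\bigl((D^T_{f\psi})^2+a_{\lambda}\bigr)^{-1}$ on each isotypical component separately (Lemma \ref{callias}), with $\rho_N$ chosen so that $f^2(\|v^{\psi}\|^2+T^2)$ eventually dominates the $\lambda$-dependent error $C_{\lambda}\|\psi\|(\|df\|+f)$ for every fixed $\lambda$; one then assembles these into the statement that $\pi_K(e)\bigl((D^T_{f\psi})^2+1\bigr)^{-1}$ is compact for $e\in C^*K$, using the Casimir element $\Omega_K$ (Lemma \ref{lem Omega cpt} and the identity \eqref{eq diff Omega}). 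This isotypical-plus-Casimir mechanism is exactly what your proposal omits, and the step you substitute for it---a uniform pointwise bound on the cross term---is false.
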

This proposition is not a direct consequence of Theorem \ref{thm def Dirac G Fred}, since the vector field $v^{\psi}|_N$ is not induced by a map $N\to \kk$ unless $\psi(N)\subset \kk$. For this reason, and to illustrate a simpler approach that is possible for compact groups, we give a separate proof of Proposition \ref{prop DN K Fred} in Subsection \ref{sec DN K Fred}. This is also a simpler proof of Theorem \ref{thm def Dirac G Fred} in the case where $G$ is compact. Furthermore, in the case of trivial groups, the arguments in Subsection \ref{sec DN K Fred} yield a criterion for Callias-type operators to be Fredholm.

Another way to prove Proposition \ref{prop DN K Fred} would have  been to slightly generalise the proof of Proposition 3.15 in \cite{HochsSong16a}. 
Indeed,  write $\psi = \psi_{\kk} \oplus \psi_{\kp}$ according to the decomposition \eqref{eq g k p}. Then if one adds a sixth term 
\[
A_6 := -\ii \sum_{j=1}^{\dim N} c(e_j)c(\nabla^{TN}_{e_j} f v^{\psi_{\kp}})
\]
(where $\{e_1, \ldots, e_{\dim N}\}$ is a local orthonormal frame for $TN$, and $\nabla^{TN}$ is the Levi--Civita connection for the restricted Riemannian metric on $TN$)
in Lemma 4.6 and Proposition 5.2 in \cite{HochsSong16a}, one obtains the estimates necessary to prove a version of Proposition 3.15 in \cite{HochsSong16a} that implies Proposition \ref{prop DN K Fred}.

Let $\rho_N$ be as in Proposition \ref{prop DN K Fred}, and suppose $f$ is $\rho_N$-admissible. Then by Lemma 2.9 in \cite{HochsSong16a}, we have
\[
\ind_K (D^N_{f\psi|_N}) = \bigl[\ker_{L^2} (D^N_{f\psi|_N})^+ \bigr] -  \bigl[\ker_{L^2} (D^N_{f\psi|_N})^-\bigr] \in KK(C^*K, \C) \cong \Rhat(K). 
\]
The induction result relates this index to the $G$-index $\ind_G(\cE, \psi)$ of $D_{f\psi}$, which makes the latter more concrete and computable.
\begin{theorem}[Induction from slices] \label{thm induction}
The multiplicity of every irreducible representation of $K$ in the $L^2$-kernel of $D^N_{f\psi|_N}$ is finite. Under the identification $KK(C_0(G/K)\rtimes G, \C) = \Rhat(K)$ by Morita equivalence, we have
\[
\ind_G(\cE, \psi) = \bigl[\ker_{L^2} (D^N_{f\psi|_N})^+ \bigr] -  \bigl[\ker_{L^2} (D^N_{f\psi|_N})^-\bigr] \quad \in \Rhat(K).  
\]
\end{theorem}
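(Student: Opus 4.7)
The plan is to combine the geometric splitting $M \cong G \times_K N$ with the Morita equivalence $C_0(G/K) \rtimes G \sim C^*K$ at the level of Kasparov cycles, thereby reducing the $G$-index of $D_{f\psi}$ to a $K$-equivariant index on the slice. The diffeomorphism induces a natural unitary isomorphism
\[
L^2(\cE) \cong \bigl(L^2(G) \,\widehat{\otimes}\, L^2(\cE|_N)\bigr)^K,
\]
for compatible volume forms, in which $K$ acts on $L^2(G)$ by right translation and on $L^2(\cE|_N)$ through the given action, and the $G$-action on $L^2(\cE)$ corresponds to left translation on $L^2(G)$. Under this identification, the representation $\pi^p_{G,G/K}$ of $C_0(G/K)\rtimes G$ factors through the regular representation on $L^2(G/K)$ tensored with the identity on $L^2(\cE|_N)$.

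To pass from $KK(C_0(G/K)\rtimes G, \C)$ to $KK(C^*K, \C) \cong \Rhat(K)$, I would pick a $K$-invariant cutoff function $c\colon G \to \R_{\geq 0}$ with $\int_G c(g)^2\, dg = 1$ and use the associated element of the Morita bimodule to compress the Kasparov cycle defining $\ind_G(D_{f\psi})$ to one whose underlying Hilbert space is $L^2(\cE|_N)$. The compressed operator should equal $D^N_{f\psi|_N}$ modulo correction terms arising from derivatives of $c$ and from the Clifford action of $\kp$-vectors at points of $N$. The remaining task is to exhibit an operator homotopy, or a $C^*K$-compact perturbation, between the compressed operator and $D^N_{f\psi|_N}$ through $K$-Fredholm operators, so that both represent the same class in $\Rhat(K)$.

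Once the two $K$-index classes are matched, finite multiplicity of each $K$-type and the virtual $L^2$-kernel formula follow immediately from Proposition~\ref{prop DN K Fred} together with Lemma~2.9 of \cite{HochsSong16a}. I expect the homotopy step to be the main obstacle. The operator $D_{f\psi}$ Clifford-multiplies in all of $TM|_N$, while $D^N_{f\psi|_N}$ uses only $TN$. When $\psi(N)\subset \kk$ the $\kp$-component of $v^\psi|_N$ vanishes and the reduction is essentially automatic; in general, however, the orthogonal splitting $\psi|_N = \psi_\kk \oplus \psi_\kp$ produces a term $c_\kp((\psi_\kp)^M|_N)$ normal to $N$ that must be shown to be a harmless perturbation of the compressed operator. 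Controlling it should require both the cocompactness of $\Zeroes(v^\psi)$ (to provide a spectral gap outside a $G$-compact set) and the $\rho$-admissibility of $f$ (to ensure this gap dominates the correction terms uniformly), in the spirit of the estimates underlying Theorem~\ref{thm def Dirac G Fred} and the proof sketched above for Proposition~\ref{prop DN K Fred}.
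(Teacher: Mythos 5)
There is a genuine gap at the crux of your argument. Your plan is to compress the Kasparov cycle for $D_{f\psi}$ by a cutoff on $G$ and then connect the compressed operator to $D^N_{f\psi|_N}$ by ``an operator homotopy, or a $C^*K$-compact perturbation''; but the difference between the two is not a small perturbation at all. Compressing by a cutoff does not remove the transverse part $D_{G/K}$ of the Dirac operator: the discrepancy is a first-order differential operator (the full $G/K$-derivative term, on top of the cutoff-derivative terms), which is neither bounded nor relatively compact, and you give no reason why a path between the two operators stays ($K$- or $G$-)Fredholm. This is exactly the analytic difficulty, and the paper resolves it by a different device: after replacing the metric by the product-type metric $B_{\kp}$ (legitimate because the $G$-index is metric-independent by Proposition 3.13 of part I, with completeness checked in Lemma \ref{lem M complete}), it introduces the family $D_{f\psi,t}=D_N+tD_{G/K}-\ii f c_{\kp}(v^{\psi})$, shows $G$-Fredholmness for $t>0$ by the estimates of part I, and for the non-elliptic endpoint $t=0$ by transporting the $K$-Fredholmness of $D^N_{f\psi|_N}$ (Proposition \ref{prop DN K Fred}) through the explicit Morita picture of Lemma \ref{lem ME KK} and Proposition \ref{prop KK F}, using the unitary $\Phi$ of Lemma \ref{lem Phi unitary}; an operator homotopy in $t$ then identifies $\ind_G(\cE,\psi)$ with $[\cM]\otimes_{C^*K}\ind_K(D^N_{f\psi|_N})$. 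Note also that the direction of the Morita identification used there is $[\cM]\otimes_{C^*K}-\,\colon KK(C^*K,\C)\to KK(C_0(G/K)\rtimes G,\C)$, so no compression and no cutoff-correction terms ever appear.

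A secondary misdiagnosis: you treat the Clifford action of the $\kp$-component of $v^{\psi}|_N$ as a defect of the slice operator that must be ``perturbed away''. In the paper's setup $D^N_{f\psi|_N}=D^N-\ii c_{\kp}(v^{f\psi}|_N)$ already contains this normal term (via the modified Clifford action $c_{\kp}$), and the point is not to remove it but to prove that the resulting Dirac-plus-Callias-type operator is $K$-Fredholm; that is the content of Propositions \ref{prop DT K Fred} and \ref{prop DN K Fred}, resting on the anticommutation statement of Lemma \ref{lem DT DN}, which is precisely why the auxiliary metric $B_{\kp}$ is needed. Your appeal to cocompactness of $\Zeroes(v^{\psi})$ and $\rho$-admissibility points in the right direction for those estimates, but as written your proof leaves both the Fredholmness of the slice-plus-normal operator and the homotopy through Fredholm operators unestablished.
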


\subsection{Compact groups and deformed Dirac operators} \label{sec DN K Fred}

We will prove
 a slightly more general statement than Proposition \ref{prop DN K Fred}. 
 Consider the setting of Subsection \ref{sec ind thm}.
 Suppose $\psi(N)\subset \kk$. (What follows will later be applied to the component $\psi_{\kk}$ of $\psi$ in $\kk$.) Let a nonnegative function $f\in C^{\infty}(N)^K$  be given. Let $T\in \End(\cE|_N)^K$ be a fibrewise self-adjoint, odd vector bundle endomorphism, such that
\beq{eq c T}
c(v^{\psi})T + Tc(v^{\psi}) = 0,
\eeq
and 
\beq{eq DNfT}
D^N fT + fT D^N \in \End(\cE|_N).
\eeq
Suppose that the pointwise norm of the endomorphism \eqref{eq DNfT} is bounded above by 
\[
\Theta\cdot (\|df\| + f),
\]
for a function $\Theta \in C^{\infty}(N)^K$ (independent of $f$).
Also suppose that the endomorphism $\|v^{\psi}\|^2 + T^2$ of $\cE|_N$ is invertible outside a compact set. For $f\in C^{\infty}(N)^K$, we consider the operator
\[
D^T_{f\psi} = D^N_{f\psi} + fT = D^N +f(-\ii c(v^{\psi}) + T).
\]
on $\Gamma^{\infty}(\cE|_N)$.
This is a combination of a deformed Dirac operator as studied in this paper, and a Callias-type operator \cite{Anghel93, Bruening92b, Bunke95, Callias78, Kucerovsky01}.
\begin{proposition} \label{prop DT K Fred}
There is a positive function $\rho_N \in C^{\infty}(N)^K$ such that if $f$ is $\rho_N$-admissible, then the operator $D^T_{f\psi}$ is $K$-Fredholm.
\end{proposition}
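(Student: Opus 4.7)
The plan is to extend the Bochner--Weitzenb\"ock approach used by Braverman in \cite{Braverman02} and in the proof of Theorem \ref{thm def Dirac G Fred} (cf.\ Proposition 3.15 of \cite{HochsSong16a}) to accommodate the Callias-type perturbation $fT$. Concretely, I would square $D^T_{f\psi}$ and show that, on each $K$-isotypical component of $L^2(\cE|_N)$, the positive term $f^2(\|v^{\psi}\|^2 + T^2)$ dominates all other contributions outside a compact set of $N$, forcing $K$-Fredholmness in the sense of the Kasparov $(C^*K,\C)$-module condition.

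First I would expand
\[
(D^T_{f\psi})^2 = (D^N)^2 + f^2\bigl(-\ii c(v^{\psi}) + T\bigr)^2 + \bigl\{D^N,\, -\ii fc(v^{\psi})\bigr\} + \bigl\{D^N,\, fT\bigr\}.
\]
The anticommutation hypothesis \eqref{eq c T}, combined with $c(v^{\psi})^2 = -\|v^{\psi}\|^2$, gives $\bigl(-\ii c(v^{\psi}) + T\bigr)^2 = \|v^{\psi}\|^2 + T^2$, which is precisely the endomorphism assumed to be invertible outside a compact set. By hypothesis \eqref{eq DNfT}, the anticommutator $\{D^N, fT\}$ is a bundle endomorphism with pointwise norm at most $\Theta(\|df\| + f)$. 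A standard Clifford-connection computation, using $[D^N, f] = c(df)$ and the identity $\{D^N, c(X)\} = \sum_j c(e_j)c(\nabla^{TN}_{e_j}X) - 2\nabla^{\cE|_N}_X$ for $X \in \cX(N)$, yields
\[
\bigl\{D^N,\, -\ii fc(v^{\psi})\bigr\} = -\ii c(df)c(v^{\psi}) - \ii f\sum_j c(e_j)c(\nabla^{TN}_{e_j}v^{\psi}) + 2\ii f\, \nabla^{\cE|_N}_{v^{\psi}}.
\]

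Next I would exploit the hypothesis $\psi(N)\subset \kk$: the vector field $v^{\psi}$ is an infinitesimal $K$-action on $N$ and is therefore Killing for the restricted metric, so $\nabla^{\cE|_N}_{v^{\psi}}$ differs from the Lie derivative $\calL^{\cE|_N}_{v^{\psi}}$ by a pointwise bounded, bundle-endomorphism-valued correction (compare Lemma 4.6 of \cite{HochsSong16a}). On each $K$-isotype of $L^2(\cE|_N)$, $\calL^{\cE|_N}_{v^{\psi}}$ acts as a bounded multiplier whose pointwise norm depends only on the $K$-type $\pi$ and on $\|\psi\|$. Combining the estimates above, I would construct a positive $K$-invariant function $\rho_N \in C^{\infty}(N)^K$ such that, for any $\rho_N$-admissible $f$, one has a pointwise bound on each $K$-isotype,
\[
(D^T_{f\psi})^2 \geq f^2\bigl(\|v^{\psi}\|^2 + T^2\bigr) - C_{\pi}\bigl(\|df\| + f + 1\bigr),
\]
with $C_{\pi}$ depending on $\pi$. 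Since $\|v^{\psi}\|^2 + T^2 \geq \delta$ for some positive $K$-invariant $\delta \in C^{\infty}(N\setminus L)^K$ off a compact set $L\subset N$, choosing $\rho_N$ large enough relative to $\delta^{-1}$ forces $f^2\delta$ to dominate $C_{\pi}(\|df\|+f+1)$ on $N\setminus L$. A Rellich-type compactness argument, exactly as in Section 5 of \cite{HochsSong16a}, then gives $K$-compactness of $\bigl((D^T_{f\psi})^2 + 1\bigr)^{-1}$ on each isotype, which is equivalent to $K$-Fredholmness of $D^T_{f\psi}$.

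The main obstacle I expect is the first-order cross term $2\ii f\, \nabla^{\cE|_N}_{v^{\psi}}$: its operator norm grows with $f$ and so cannot be absorbed into $f^2(\|v^{\psi}\|^2 + T^2)$ by the $\rho_N$-admissibility condition alone. The resolution relies on $K$-equivariance together with $\psi(N)\subset\kk$, which convert $\nabla^{\cE|_N}_{v^{\psi}}$, up to a bounded correction, into a Lie derivative acting by a bounded multiplier on each $K$-isotype. Because $K$-Fredholmness requires only isotype-wise compactness of the resolvent, not uniformity across $\hat K$, the constants $C_{\pi}$ above are allowed to depend on $\pi$, and $\rho_N$ can be chosen independently of $\pi$. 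Once the case $T=0$ is noted, Proposition \ref{prop DN K Fred} follows by instantiating $T = -\ii c_{\kp}(v^{\psi_{\kp}}|_N)$ and verifying that \eqref{eq c T} and \eqref{eq DNfT} hold by Clifford-orthogonality between $TN$ and $\kp$ in the metric $B_{\kp}$.
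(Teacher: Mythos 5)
Your core estimate is the same as the paper's: the paper also squares $D^T_{f\psi}$ (the analogue of Lemma 4.5 of \cite{HochsSong16a}), uses \eqref{eq c T} to produce the potential $f^2(\|v^{\psi}\|^2+T^2)$, controls $\{D^N,fT\}$ by the hypothesis \eqref{eq DNfT}, and handles the first-order term $f\nabla^{\cE|_N}_{v^{\psi}}$ exactly as you propose, by comparing it with the Lie derivative $\calL_{\psi(n)}$ and using that the latter acts on the $V_\lambda$-isotype with norm at most $C_\lambda\|\psi\|$; this is Lemma \ref{callias}. Two caveats on your version of that estimate. First, your displayed lower bound replaces what are genuinely unbounded $K$-invariant \emph{functions} ($\Theta_1+\Theta_2+C_\lambda\|\psi\|$ in the paper) by a constant $C_\pi$, and the claim that a single $\rho_N$ makes $f^2\delta$ dominate the error on a fixed set $N\setminus L$ for every $\pi$ cannot hold literally; what one actually arranges (and all one needs) is that for each fixed $\pi$ the potential minus the error tends to infinity, which the paper achieves by choosing $\tilde\rho_N$ so that $(\Theta_1+\Theta_2)(\tilde\rho_N-1)$ grows faster than $\|\psi\|$ — this is the precise mechanism that lets $\rho_N$ be independent of the $K$-type while $C_\lambda$ is not, and your sketch should make it explicit.

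The genuine soft spot is your final assertion that ``$K$-Fredholmness requires only isotype-wise compactness of the resolvent.'' That is not a definition but a claim needing proof: $((D^T_{f\psi})^2+1)^{-1}$ is an infinite direct sum over $\hat K$ of compact operators, and such a sum need not be compact, so one must explain why $\pi_K(e)\bigl((D^T_{f\psi})^2+1\bigr)^{-1}$ is compact for $e$ in (a dense subalgebra of) $C^*K$. This passage is exactly where the paper does additional work: it inserts the Casimir $\Omega_K$, proves compactness of $\bigl((D^T_{f\psi})^2+\Omega_K+1\bigr)^{-1}$ because the eigenvalues $\|\lambda+\rho_K\|-\|\rho_K\|$ force the summands' norms to decay (Lemma \ref{lem Omega cpt}), and then removes $\Omega_K$ via the resolvent identity \eqref{eq diff Omega}. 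Your claim can in fact be justified more elementarily for compact $K$ — elements of $C^*K$ supported on finitely many $K$-types are dense, and for such $e$ the operator $\pi_K(e)\bigl((D^T_{f\psi})^2+1\bigr)^{-1}$ involves only finitely many isotypes, so the general case follows by norm approximation using the uniform bound $\|((D^T_{f\psi})^2+1)^{-1}\|\le 1$ — but as written the step is asserted, not argued, and it is the one point where your route diverges from (and, once completed, is slightly more elementary than) the paper's Casimir argument. The deduction of Proposition \ref{prop DN K Fred} at the end matches the paper's, up to an immaterial sign in $T=\pm\ii c_{\kp}(v^{\psi_{\kp}})$, with the anticommutation checks being Lemma \ref{lem DT DN}.
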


\begin{remark}\label{remark callias}
If $K$ is the trivial group, so that  $\psi = 0$, and if $T^2$ is invertible outside a compact set, then Proposition \ref{prop DT K Fred} shows that the Callias-type operator 
\[
D^T_f = D^N + f \cdot T
\]
is Fredholm on the noncompact manifold $N$, for admissible functions $f$. 
\end{remark}

To deduce Proposition \ref{prop DN K Fred} from Proposition \ref{prop DT K Fred}, write $\psi = \psi_{\kk} \oplus \psi_{\kp}$ according to the decomposition \eqref{eq g k p}. Then 
if we replace $\psi$ by $\psi_{\kk}$ in Proposition \ref{prop DT K Fred}, and set $T:= \ii c_{\kp}(v^{\psi_{\kp}})$, then the conditions \eqref{eq c T} and \eqref{eq DNfT} hold, so Proposition \ref{prop DN K Fred} follows. The least trivial conditions to check are the following.
\begin{lemma}\label{lem DT DN}
The operators $c(v^{\psi_{\kk}})$ and $c_{\kp}(v^{\psi_{\kp}})$ anticommute, and 
\[
D^N fc_{\kp}(v^{\psi_{\kp}})+ fc_{\kp}(v^{\psi_{\kp}}) D^N
\]
is a vector bundle endomorphism of $\cE|_N$.
\end{lemma}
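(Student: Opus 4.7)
The plan is to use the orthogonal decomposition \eqref{eq TMN} together with the key compatibility $c_{\kp}|_{TN} = c|_{TN}$, and then rely only on Clifford relations for both parts.

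For the anticommutation, I would first observe that for $n \in N$, the value $v^{\psi_{\kk}}_n = \ddt \exp(-t\psi_{\kk}(n))\cdot n$ is tangent to the $K$-orbit through $n$, and since $K$ preserves $N$, it lies in $T_n N$. On the other hand, $v^{\psi_{\kp}}_n$ is by definition of the form $X^M_n$ with $X \in \kp$, so under the identification \eqref{eq TMN} it is precisely the image of the $\kp$-component. Thus $v^{\psi_{\kk}}|_N \in TN$ and $v^{\psi_{\kp}}|_N$ lies in the $N\times\kp$ summand, and these two summands are orthogonal with respect to $B_{\kp}$ by construction. Because $v^{\psi_{\kk}}|_N$ is tangent to $N$, we have $c(v^{\psi_{\kk}}|_N) = c_{\kp}(v^{\psi_{\kk}}|_N)$. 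The Clifford relation for $c_{\kp}$ with respect to the metric $B_{\kp}$ then yields
\[
c(v^{\psi_{\kk}})c_{\kp}(v^{\psi_{\kp}}) + c_{\kp}(v^{\psi_{\kp}})c(v^{\psi_{\kk}}) = -2 B_{\kp}\bigl(v^{\psi_{\kk}}|_N, v^{\psi_{\kp}}|_N\bigr) = 0.
\]

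For the second claim, the most efficient route is a principal symbol computation. Writing $W := fc_{\kp}(v^{\psi_{\kp}})$, the operator $D^N W + W D^N$ is a priori first order, so it is a vector bundle endomorphism if and only if its principal symbol vanishes. Locally choose an orthonormal frame $\{e_1,\ldots,e_{\dim N}\}$ of $TN$ and write $D^N = \sum_j c_N(e_j)\nabla^{\cE|_N}_{e_j} = \sum_j c_{\kp}(e_j)\nabla^{\cE|_N}_{e_j}$. Expanding $D^N(W s) + W D^N s$ and collecting only the terms in which a derivative falls on $s$, one obtains
\[
\sum_j f\bigl(c_{\kp}(e_j)c_{\kp}(v^{\psi_{\kp}}) + c_{\kp}(v^{\psi_{\kp}})c_{\kp}(e_j)\bigr) \nabla^{\cE|_N}_{e_j} s = -2\sum_j f B_{\kp}(e_j, v^{\psi_{\kp}}|_N)\nabla^{\cE|_N}_{e_j} s,
\]
which vanishes because each $e_j \in TN$ is $B_{\kp}$-orthogonal to the $\kp$-component containing $v^{\psi_{\kp}}|_N$. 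All remaining terms are zeroth order in $s$ (they come from $e_j(f)$, from the commutator of $\nabla^{\cE|_N}_{e_j}$ with the endomorphism $c_{\kp}(v^{\psi_{\kp}})$, and from Clifford-multiplying these against $c_{\kp}(e_j)$), so the anticommutator is a vector bundle endomorphism of $\cE|_N$.

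The main obstacle is the bookkeeping between the two Clifford actions $c$ and $c_{\kp}$ and the two metrics $B|_N$ and $B_{\kp}$; the argument hinges on exploiting that the isometry defining $c_{\kp}$ is the identity on $TN$, so that $D^N$ can be written equivalently in terms of $c_{\kp}$, and on exploiting that the decomposition \eqref{eq TMN} is orthogonal with respect to $B_{\kp}$ (but generally not with respect to $B|_N$). Once these identifications are in place, both assertions reduce to the Clifford orthogonality $B_{\kp}(TN, \kp) = 0$.
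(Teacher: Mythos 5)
Your proposal is correct and follows essentially the same route as the paper: both the anticommutation and the vanishing of the first-order part of $D^N fc_{\kp}(v^{\psi_{\kp}})+ fc_{\kp}(v^{\psi_{\kp}}) D^N$ are reduced to the fact that, for the metric $B_{\kp}$, vectors tangent to $N$ are orthogonal to the vectors generated by $\kp$, combined with the Clifford relation and a local orthonormal frame computation. The only cosmetic difference is that the paper records the explicit zeroth-order remainder $\sum_{j} c\bigl(\nabla^{TM}_{e_j} fv^{\psi_{\kp}}\bigr)$, whereas you argue at the level of principal symbols that the remainder is $C^{\infty}$-linear, which suffices for the statement of the lemma.
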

\begin{proof}
The claim follows from the fact that for the metric $B_{\kp}$ on $TM|_N$, tangent vectors to $N$ are orthogonal to tangent vectors defined by elements of $\kp$. This immediately implies that $c(v^{\psi_{\kk}})$ and $c_{\kp}(v^{\psi_{\kp}})$ anticommute. It also implies that, in terms of a local orthonormal frame $\{e_1, \ldots, e_{\dim N}\}$ of $TN$,
\[
D^N fc_{\kp}(v^{\psi_{\kp}})+ fc_{\kp}(v^{\psi_{\kp}}) D^N = \sum_{j=1}^{\dim N} c\bigl(\nabla^{TM}_{e_j} fv^{\psi_{\kp}}\bigr),
\]
so the claim follows.
\end{proof}

Let $\kt \subset \kk$ be a maximal torus. Fix a set of positive roots for $(\kk_{\C}, \kt_{\C})$. Let $\rho_K$ be half the sum of these positive roots (not to be confused with the function $\rho$ as in Theorem \ref{thm def Dirac G Fred}). Let $\Lambda_+ \subset \ii \kt^*$ be the set of dominant integral weights. For $\lambda \in \Lambda_+$, let $V_{\lambda}$ be the irreducible representation space of $K$ with highest weight $\lambda$. 
For $\lambda \in \Lambda_+$, let $L^2(\cE|_N)_{\lambda}$ be the $V_{\lambda}$-isotypical component of $L^2(\cE|_N)$.
\begin{lemma}\label{callias} 
There is a real-valued function $\rho_N \in C^{\infty}(N)^K$, such that if $f$ is $\rho_N$-admissible, then for all $\lambda \in \Lambda_+$, and all $a_{\lambda} > 0$,
the operator
\[
\bigl( (D^T_{f\psi})^2  + a_{\lambda} \bigr)^{-1}\big|_{L^2(\cE|_N)_{\lambda}}
\]
on $L^2(\cE|_N)_{\lambda}$ is compact.
\end{lemma}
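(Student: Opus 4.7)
The plan is to derive a Weitzenböck-type lower bound for $(D^T_{f\psi})^2$ in which, on each isotypical component, a confining potential dominates the remaining terms outside a compact set in $N$, and then to invoke the standard spectral theory of elliptic operators with confining potential.

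Expanding
\[
(D^T_{f\psi})^2 = (D^N)^2 + f^2(-\ii c(v^\psi)+T)^2 + \{D^N,\, f(-\ii c(v^\psi)+T)\},
\]
the anticommutation \eqref{eq c T} together with $c(v^\psi)^2 = -\|v^\psi\|^2$ collapses the middle term to $f^2(\|v^\psi\|^2+T^2)$, which by the invertibility hypothesis is bounded below by $f^2\delta$ outside a compact $C_0\subset N$, for some fixed $\delta>0$. In the anticommutator, the $T$-contribution has operator norm $\leq \Theta(\|df\|+f)$ by \eqref{eq DNfT}; the $c(v^\psi)$-contribution, using $\psi(N)\subset\kk$ so that $v^\psi$ is tangent to $N$, is computed in a local orthonormal frame $\{e_j\}$ of $TN$ to be
\[
\{D^N, fc(v^\psi)\} = c(\grad f)c(v^\psi) + f\sum_j c(e_j)c(\nabla^{TN}_{e_j}v^\psi) - 2f\,\nabla^\cE_{v^\psi}.
\]
The first two terms are of order zero and bounded pointwise by $C_1(\|df\|+f)$ for some locally bounded $K$-invariant $C_1$. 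For the first-order term $\nabla^\cE_{v^\psi}$, I would use the Kostant identity $\nabla^\cE_{v^\psi} = \calL^\cE_\psi - \mu^\cE(\psi)$, where $\calL^\cE_\psi$ is the infinitesimal $K$-action along $\psi$ and $\mu^\cE(\psi)$ a moment-type endomorphism. On $L^2(\cE|_N)_\lambda$ the action $\calL^\cE_\psi$ has pointwise operator norm bounded by $\|\psi(n)\|\cdot\|\lambda+\rho_K\|$, because the $K$-representation on the $\lambda$-isotypical is a sum of copies of $V_\lambda$. Consequently the entire cross term is dominated in operator norm on $L^2(\cE|_N)_\lambda$ by $C_\lambda\bigl(\|df\|+f+f\|\psi\|\bigr)$ for some finite $C_\lambda$ depending on $\lambda$ and on the geometry of $N$.

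Next, invoking Lemma 3.10 of \cite{HochsSong16a}, I would choose a single $K$-invariant $\rho_N$ with $\rho_N\to 1$ rapidly enough at infinity that for every $\rho_N$-admissible $f$ the potential
\[
W_\lambda := f^2(\|v^\psi\|^2 + T^2) - C_\lambda(\|df\|+f+f\|\psi\|)
\]
tends to $+\infty$ outside a (possibly $\lambda$-dependent) compact subset of $N$; a single $\rho_N$ works for every $\lambda$ because $C_\lambda$ and $\|\psi\|$ are fixed once $\lambda$ is fixed, while admissibility is enforced pointwise. On each $L^2(\cE|_N)_\lambda$ this yields the operator inequality $(D^T_{f\psi})^2 + a_\lambda \geq (D^N)^2 + W_\lambda + a_\lambda$ outside $C_0$, and a standard Garding estimate together with Rellich compactness on sublevel sets of $W_\lambda$ then gives compactness of the resolvent on $L^2(\cE|_N)_\lambda$.

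The main obstacle is the $\lambda$-dependence of the first-order cross term $-2f\,\nabla^\cE_{v^\psi}$ through the infinitesimal $K$-action, which is precisely what forces the compactness statement to be made one isotypical at a time rather than globally on $L^2(\cE|_N)$; a secondary technical point is to verify that one choice of $\rho_N$ simultaneously absorbs all the $C_\lambda$'s, which reduces to the fact that these constants are finite for each fixed $\lambda$.
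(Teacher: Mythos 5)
Your proposal follows essentially the same route as the paper's proof: expand $(D^T_{f\psi})^2$ Bochner-style, absorb the zero-order cross terms using \eqref{eq DNfT} and the Clifford computation, compare $\nabla^{\cE|_N}_{v^{\psi}}$ with the infinitesimal $K$-action to get a bound $C_{\lambda}\|\psi\|$ on each isotypical component, choose one admissibility function $\rho_N$ (built so the confining term $f^2(\|v^{\psi}\|^2+T^2)$ eventually dominates the $\lambda$-dependent error for each fixed $\lambda$), and conclude discrete spectrum, hence compact resolvent, on $L^2(\cE|_N)_{\lambda}$. The only inaccuracy is your claim that invertibility of $\|v^{\psi}\|^2+T^2$ outside a compact set yields a uniform lower bound $f^2\delta$ at infinity (the lowest eigenvalue may decay), but this aside is not needed: as in the paper, one builds the factor $(\|v^{\psi}\|^2+\|T\|^2)^{-1}$, together with a function growing faster than $\|\psi\|$, into the choice of $\rho_N$, which is exactly the uniformity point you flag at the end.
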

\begin{proof}
Analogously to Lemma 4.5 in \cite{HochsSong16a}, we have the local expression, with respect to a local orthonormal frame $\{e_1, \ldots, e_{\dim N}\}$ of $TN$,
\begin{multline*}
(D^T_{f\psi})^2 = \\
(D^N)^2 + f^2(\|v^{\psi}\|^2+T^2) + \ii \sum_{j=1}^{\dim N} c(e_j)c(\nabla^{TM}_{e_j} fv^{\psi}) -2\ii f\nabla^{\cE|_N}_{v^{\psi}} + (D^NfT + fTD^N).
\end{multline*}
By assumption on $T$, there is a function $\Theta_1 \in C^{\infty}(N)^K$ (independent of $f$ and $\lambda$) such that we have the pointwise estimate
\[
 \Bigl\| \ii \sum_{j=1}^{\dim N} c(e_j)c(\nabla^{TM}_{e_j} fv^{\psi}) + (D^NfT + fTD^N)\Bigr\| \leq \Theta_1 (\|df\|+ f)
\]

 Let $C_{\lambda}  > 0$ be such that for all $X\in \kk$, the operator on $V_{\lambda}$ defined by $X$ has norm at most $C_{\lambda}\|X\|$. For every $n\in N$, the operator
\[
\nabla^{\cE|_N}_{v^{\psi}_n} - \calL_{\psi(n)}
\]
is a linear endomorphism of $\cE_n$. Since, for such $n$, we have
\[
\calL_{\psi(n)}|_{L^2(\cE)_{\lambda}} \leq C_{\lambda}\|\psi(n)\|,
\]
there is a positive function $\Theta_2 \in C^{\infty}(N)^K$ (independent of $f$ and $\lambda$)  such that
\[
\Bigl| \nabla^{\cE|_N}_{v^{\psi}} \Bigr| \leq \Theta_2 + C_{\lambda}\|\psi\|,
\]
where use the absolute value of operators as before.
 
We conclude that
\beq{eq est DT}
(D^T_{f\psi})^2|_{L^2(\cE|_N)_{\lambda}}  \geq (D^N)^2|_{L^2(\cE|_N)_{\lambda}} + f^2(\|v^{\psi}\|^2+T^2) - (\Theta_1 + \Theta_2 + C_{\lambda}\|\psi\|)(\|df\| + f)
\eeq
Let $\tilde \rho_N \in C^{\infty}(N)^K$ be a real-valued function such that $(\Theta_1 + \Theta_2)(\tilde \rho_N - 1)$ tends to infinity faster than $\|\psi\|$ as its argument tends to infinity. Choose $\rho_N \in C^{\infty}(N)^K$ such that, outside a relatively compact neighbourhood of the points $n\in N$ where $\|v^{\psi}_n\|^2 + \|T_n\|^2  = 0$, we have
\[
\rho_N \geq \frac{(\Theta_1 + \Theta_2)\tilde \rho_N}{\|v^{\psi}\|^2 + \|T\|^2}.
\]
In addition, choose $\rho_N$ so that it is at least equal to $1$ outside a compact set.
Suppose $f$ is $\rho_N$-admissible. Then by \eqref{eq est DT}, we find that
\[
(D^T_{f\psi})^2|_{L^2(\cE|_N)_{\lambda}}  \geq (D^N)^2|_{L^2(\cE|_N)_{\lambda}} + \zeta_{\lambda},
\]
where $\zeta_{\lambda} \in C^{\infty}(N)^K$ satisfies 
\[
\zeta_{\lambda} \geq \bigl( (\Theta_1 + \Theta_2)(\tilde \rho_N - 1) - C_{\lambda}\|\psi\|\bigr)(\|df\| + f),
\]
outside a relatively compact neighbourhood of the set $\{ n\in N; \|v^{\psi}_n\|^2 + \|T_n\|^2  = 0\}$. Since $f\geq 1$ outside a compact set, and by the assumption on $\tilde \rho_N$, the function on the right hand side tends to infinity as its argument tends to infinity. This implies that $(D^T_{f\psi})^2|_{L^2(\cE|_N)_{\lambda}}$ has discrete spectrum. It follows that
\[
\bigl( (D^T_{f\psi})^2  + a_{\lambda} \bigr)^{-1}
\]
is indeed a compact operator on $L^2(\cE|_N)_{\lambda}$, for all $a_{\lambda} > 0$.
\end{proof}

In the proof of Proposition \ref{prop DT K Fred}, we will use the Casimir element $\Omega_K$ in the centre of the universal enveloping algebra of $\kk$. 
For $\lambda \in \Lambda_+$, 
the element $\Omega_K$ acts on $V_{\lambda}$ as the scalar
\beq{eq Omega lambda}
\|\lambda + \rho_K\| - \|\rho_K\|.
\eeq
(These norms are defined by the same inner product that was used to define $\Omega_K$, the one fixed before to define the metric $B_{\kp}$.)

The operator $(D^T_{f\psi})^2 + 1$ is invertible by Proposition 10.2.11 in \cite{higson00}. Since $\Omega_K$ is a nonnegative operator, the operator $(D^T_{f\psi})^2 + \Omega_K+ 1$ is invertible as well.
The main part of the proof of Proposition \ref{prop DT K Fred} is the following.
\begin{lemma}\label{lem Omega cpt} There is a real-valued function $\rho_N \in C^{\infty}(N)^K$, such that if $f$ is $\rho_N$-admissible, 
the operator
\[
\bigl( (D^T_{f\psi})^2 + \Omega_K + 1 \bigr)^{-1}
\]
on $L^2(\cE|_N)$ is compact.
\end{lemma}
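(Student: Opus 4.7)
The plan is to decompose $L^2(\cE|_N)$ into $K$-isotypical components, apply Lemma \ref{callias} on each piece with a suitable choice of $a_\lambda$, and then assemble the resulting compact operators into a globally compact operator using the growth of the Casimir eigenvalues.

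Since $D^T_{f\psi}$ is $K$-equivariant, so is $(D^T_{f\psi})^2$, and $\Omega_K$ commutes with the $K$-action on $L^2(\cE|_N)$. Therefore both $(D^T_{f\psi})^2 + \Omega_K + 1$ and its inverse preserve the orthogonal decomposition
\[
L^2(\cE|_N) = \bigoplus_{\lambda \in \Lambda_+} L^2(\cE|_N)_\lambda.
\]
By Schur's lemma, $\Omega_K$ acts on $L^2(\cE|_N)_\lambda$ as the scalar $c_\lambda \geq 0$ given in \eqref{eq Omega lambda}. I would then let $\rho_N$ be as in Lemma \ref{callias} and take $f$ to be $\rho_N$-admissible, and apply Lemma \ref{callias} with $a_\lambda := c_\lambda + 1 > 0$ to conclude that
\[
R_\lambda := \bigl( (D^T_{f\psi})^2 + \Omega_K + 1 \bigr)^{-1}\big|_{L^2(\cE|_N)_\lambda}
\]
is compact on $L^2(\cE|_N)_\lambda$ for every $\lambda \in \Lambda_+$, with uniform norm bound $\|R_\lambda\| \leq (c_\lambda + 1)^{-1}$ coming from $(D^T_{f\psi})^2 \geq 0$.

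To patch these local compactness statements into global compactness, I would use that $\Lambda_+$ is discrete and $c_\lambda \to \infty$ as $\lambda$ escapes every bounded region, so that only finitely many $\lambda$ satisfy $c_\lambda \leq M$ for any $M$. Hence for each $\varepsilon > 0$ the truncation $\bigoplus_{\lambda \in F} R_\lambda$ to a sufficiently large finite set $F \subset \Lambda_+$ is a finite orthogonal sum of compact operators, while its complement in $\bigl( (D^T_{f\psi})^2 + \Omega_K + 1 \bigr)^{-1}$ has operator norm at most $\varepsilon$. Since the compact operators form a norm-closed ideal in $\cB(L^2(\cE|_N))$, the inverse is compact. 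The main subtlety is promoting the per-component compactness supplied by Lemma \ref{callias} to compactness on the full Hilbert space; it is precisely the growth of the Casimir scalars that provides the uniform decay needed to assemble the components.
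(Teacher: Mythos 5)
Your proof is correct and follows essentially the same route as the paper's: decompose $L^2(\cE|_N)$ into $K$-isotypical components, apply Lemma \ref{callias} on each summand with $a_{\lambda}$ the shifted Casimir scalar, and use the growth of the eigenvalues $\|\lambda+\rho_K\|-\|\rho_K\|$ to see that the direct sum of the resolvents is a norm limit of compact operators. Your explicit norm bound $\|R_{\lambda}\|\leq (c_{\lambda}+1)^{-1}$ and finite-truncation argument just spell out the paper's remark that the direct sum converges in operator norm to a compact operator.
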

\begin{proof}
For $\lambda \in \Lambda_+$, the Casimir operator $\Omega_K$ acts on $L^2(\cE|_N)_{\lambda}$ as the scalar \eqref{eq Omega lambda}. By $K$-equivariance of $D^T_{f\psi}$, we therefore have the decomposition
\[
\bigl( (D^T_{f\psi})^2 + \Omega_K + 1 \bigr)^{-1} = \bigoplus_{\lambda \in \Lambda_+} \bigl( (D^T_{f\psi})^2|_{L^2(\cE|_N)_{\lambda}} + \|\lambda + \rho_K\| - \|\rho_K\| + 1 \bigr)^{-1}.
\]
By Lemma \ref{callias} every term in this sum is compact. 
Since the norm of a direct sum of operators is the supremum of the norms of the terms, the above direct sum
 converges in the operator norm, to a compact operator. 
\end{proof}

\medskip \noindent \emph{Proof of Proposition \ref{prop DT K Fred}.}
Let $e\in C^{\infty}(K)$. Since $\Omega_K$ commutes with $D^N$, $T$ and $c(v^{\psi})$, we have
\begin{multline}\label{eq diff Omega}
\pi_K(e)\bigl( (D^T_{f\psi})^2 + 1 \bigr)^{-1} - \pi_K(e)\bigl( (D^T_{f\psi})^2 + \Omega_K^r + 1 \bigr)^{-1} \\
= -\bigl( (D^T_{f\psi})^2 + 1 \bigr)^{-1}  \Omega_K \pi_K(e) \bigl( (D^T_{f\psi})^2 + \Omega_K^r + 1 \bigr)^{-1},
\end{multline}
as a simpler analogue of Lemma 4.6 in \cite{HochsSong16a}. Because $\Omega_K \pi_K(e) =\pi_K( \Omega_K e)$ is a bounded operator, Lemma \ref{lem Omega cpt} implies that 
\[
\pi_K(e)\bigl( (D^T_{f\psi})^2 + 1 \bigr)^{-1}
\] 
is a compact operator, which implies the claim.
\hfill $\square$

\subsection{A special case} \label{sec psi N k}

Under some additional assumptions, Theorem \ref{thm induction} takes a simpler form. One of these assumptions is that $\psi(N) \subset \kk$. This assumption is not satisfied in all relevant examples, see Subsection \ref{sec TGK}. In fact, cases where this assumption is \emph{not} satisfied are furthest removed from existing index theory, and therefore potentially the most interesting.

Since we now suppose that $\psi(N) \subset \kk$,
 the operator $D^N_{f\psi|_N}$ in Theorem \ref{thm induction} is precisely of the form studied by Braverman in \cite{Braverman02}. 
Therefore, Braverman's cobordism invariance result, Theorem 3.7 in \cite{Braverman02}, and all of its consequences, generalise to the $G$-index of deformed Dirac operators, under this additional assumption.


Suppose, furthermore,  that $G/K$ is even-dimensional, and  equivariantly $\Spin$, with $\Z_2$-graded spinor bundle
\[
\cE_{G/K} = G \times_K S_\p. 
\]
Let $\Cl(N\times \kp) = N\times \Cl(\kp)$ be the Clifford bundle of $N\times \kp\to N$.
Consider the $\Cl(N\times \kp)$-module $N\times S_{\kp}$, and the Clifford module 
\beq{eq EN}
\cE_N := \Hom_{\Cl(N\times \kp)} (N\times S_{\kp}, \cE).
\eeq
Then, since $S_{\kp}$ is an irreducible representation of $\Cl(\kp)$, 
\beq{eq decomp EN}
\cE = \cE_N \otimes S_{\kp}. 
\eeq
Let $D^{\cE_N}$ be the Dirac operator associated to any $K$-invariant Clifford connection on $\cE_N$. 
Since now the vector bundle endomorphism $c_{\kp}( v^{f\psi}|_N)$ of $\cE|_N$ acts trivially on the factor $N\times S_{\kp}$, we have the deformed Dirac operator
\[
D^{\cE_N}_{f\psi} := D^{\cE_N} - \ii c_{\kp}( v^{f\psi}|_N)
\]
on $\Gamma^{\infty}(\cE_N)$. In terms of the decomposition \eqref{eq decomp EN}, we then have
\[
D^N_{f\psi|_N} = D^{\cE_N}_{f\psi} \otimes 1_{S_{\kp}}.
\]
Therefore, Theorem \ref{thm induction} reduces to the following statement.
\begin{corollary} \label{cor induction psi N k}
In the setting of Theorem \ref{thm induction}, suppose that $\psi(N)\subset \kk$, and that $G/K$ is even-dimensional and equivariantly $\Spin$. Then
under the identification $KK(C_0(G/K)\rtimes G, \C) = \Rhat(K)$ by Morita equivalence, we have
\[
\ind_G(\cE, \psi) =\bigl( \bigl[\ker_{L^2} (D^{\cE_N}_{f\psi})^+  \bigr] -  \bigl[\ker_{L^2} (D^{\cE_N}_{f\psi})^-\bigr] \bigr) \otimes S_{\kp} \quad \in \Rhat(K).  
\]
\end{corollary}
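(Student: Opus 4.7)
My plan is to derive Corollary 3.3 directly from Theorem 3.1 by unpacking the structure of $D^N_{f\psi|_N}$ under the extra hypotheses $\psi(N)\subset\kk$ and $G/K$ equivariantly $\Spin$. The input is the formula of Theorem 3.1; everything else is checking that the right-hand side there can be rewritten as $(\text{kernel})\otimes S_{\kp}$.

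First I would verify that, under the assumption $\psi(N)\subset \kk$, the restriction $v^{\psi}|_N$ is actually a vector field on $N$, i.e.\ takes values in $TN$. This is because $N = p^{-1}(eK)$ is $K$-stable (as $eK$ is the unique $K$-fixed point of $G/K$), so for $n\in N$ the curve $t\mapsto \exp(-t\psi(n))\cdot n$ stays inside $N$, giving $v^{\psi}_n \in T_nN$. Since the isomorphism $(TM|_N, B_{\kp})\to (TM|_N,B|_N)$ defining $c_{\kp}$ is the identity on $TN$, we then have $c_{\kp}(v^{f\psi}|_N) = c(v^{f\psi}|_N)$, and this is a Clifford action by an element of $TN$.

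Next I would exploit the decomposition $\cE|_N \cong \cE_N \otimes S_{\kp}$ as $\Z_2$-graded, $K$-equivariant Clifford bundles, with $\cE_N$ a $\Cl(TN)$-module and $S_{\kp}$ a $\Cl(\kp)$-module. Choose a $K$-invariant Clifford connection on $\cE_N$, and combine it with the (flat, $K$-equivariant) connection on the trivial bundle $N\times S_{\kp}$ to obtain a Clifford connection on $\cE|_N$ that is of product type. By Proposition~3.13 in \cite{HochsSong16a}, the $G$-index on $M$ is independent of the choice of Clifford connection on $\cE$; extending the product connection on $\cE|_N$ to a $G$-invariant Clifford connection on all of $\cE$ (using the diffeomorphism $M\cong G\times_K N$), we may assume $\nabla^{\cE}|_N$ is of the product form. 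With this choice, $D^N = D^{\cE_N}\otimes 1_{S_{\kp}}$, and since the deformation term acts only on the $\cE_N$-factor by the previous paragraph, we obtain $D^N_{f\psi|_N} = D^{\cE_N}_{f\psi}\otimes 1_{S_{\kp}}$. The $L^2$-kernel then factors as $\ker_{L^2}(D^{\cE_N}_{f\psi})\otimes S_{\kp}$, and compatibility of the gradings gives the identity in $\Rhat(K)$ by substituting into Theorem \ref{thm induction}.

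The main obstacle I expect is justifying the product-form reduction of the Dirac operator: one needs a $K$-invariant Clifford connection on $\cE$ whose restriction to $N$ respects the tensor factorisation $\cE|_N = \cE_N\otimes S_{\kp}$, and one must verify that the resulting $D^N$ has no cross-terms from the two factors. This is where the equivariant $\Spin$ hypothesis on $G/K$ is essential, as it is precisely what allows the $S_{\kp}$-factor to be extracted as a globally defined, $K$-equivariant summand. Once the splitting of $D^N_{f\psi|_N}$ is established, the remainder is a formal computation in $\Rhat(K)$ using that $[S_{\kp}] = [S_{\kp}^+] - [S_{\kp}^-]$ and that the tensor product of graded kernels distributes over the virtual character difference.
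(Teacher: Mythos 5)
Your overall route is the same as the paper's: under $\psi(N)\subset\kk$ the vector field $v^{\psi}|_N$ is tangent to $N$ (so $c_{\kp}(v^{f\psi}|_N)=c(v^{f\psi}|_N)$ acts only through the $\Cl(TN)$-factor), one decomposes $\cE|_N\cong \cE_N\otimes S_{\kp}$, writes the deformed operator as $D^{\cE_N}_{f\psi}\otimes 1_{S_{\kp}}$ for a product-type connection, and substitutes into Theorem \ref{thm induction}; the factorisation of the $L^2$-kernel and the bookkeeping with $[S_{\kp}]=[S_{\kp}^+]-[S_{\kp}^-]$ are then routine, exactly as you say.

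The step where you diverge from the paper is also the one that does not hold up as written: you justify passing from the restricted connection $\nabla^{\cE|_N}$ to a product connection by ``extending the product connection on $\cE|_N$ to a $G$-invariant Clifford connection on all of $\cE$'' and then invoking Proposition 3.13 of \cite{HochsSong16a}. But the restriction to $TN$-directions of a $G$-invariant Clifford connection on $(\cE,c)$ need not be a Clifford connection on $(\cE|_N,c_N)$ at all: the Clifford condition on $M$ involves $\nabla^{TM}_v w$ for $v,w$ tangent to $N$, which has components in the $\kp$-directions unless $N$ is totally geodesic, so there is no reason your chosen product connection on $\cE|_N$ arises as the restriction of any admissible connection on $M$; at minimum this requires a construction and verification you do not supply. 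The cleaner justification, and the one the paper implicitly relies on, stays entirely on $N$: because $\psi(N)\subset\kk$, the operator $D^N_{f\psi|_N}$ is precisely a deformed Dirac operator of the type studied by Braverman for the compact group $K$, and its index in $\Rhat(K)$ is independent of the choice of $K$-invariant Clifford connection (and of $f$), see \cite{Braverman02} and Subsection 2.2. So you may replace the restricted connection by $\nabla^{\cE_N}\otimes 1+1\otimes d$ directly at the level of $N$, without touching the data on $M$, and the rest of your argument goes through unchanged.
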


If $D$ is a $\Spinc$-Dirac operator, we will see in Subsection \ref{sec [Q,R]=0} that the $G$-index of its deformation satisfies the \emph{quantisation commutes with reduction} principle in the setting of this subsection.

\begin{remark} \label{rem Sp zero}
If $G$ is semisimple, $G/K$ is even-dimensional, and $\rank(G) \not= \rank(K)$, then the element $[S_{\kp}] \in R(K)$ is zero, see (1.2.5) in \cite{Barbasch83}. (The arguments there actually imply that the same is true for reductive groups.) In this case, Corollary \ref{cor induction psi N k} is a vanishing result for the $G$-index, under the condition that $\psi(N)\subset \kk$. This is an exceptional situation, however, which shows how restrictive this condition is. In Section \ref{sec properties}, we will see many examples of nonzero $G$-indices.
\end{remark}

We have seen that the situations where the $G$-index of deformed Dirac operators has the potential to yield most information not accessible via existing index theory are those where
\begin{enumerate}
\item $G$ is noncompact;
\item $M/G$ is noncompact; and
\item $\psi(N)$ is not contained in $\kk$ (for any choice of $p\colon M\to G/K$).
\end{enumerate}
In Subsection \ref{sec TGK}, we will see a natural class of examples in this new setting. 

In the rest of this section, we prove Proposition \ref{prop DN K Fred} and 
Theorem \ref{thm induction}.

\subsection{An explicit form of the Morita equivalence isomorphism}

Let us define the module $\cM$, that implements the Morita equivalence $C_0(G/K) \rtimes G \sim C^*K$, as in Situation 10 in \cite{Rieffel82}. As a Hilbert $C^*K$-module, it is the completion of $C_c(G)$ in the $C^*K$-valued inner product given by
\[
(f, f')_{C^*K}(k) = \int_{G}\overline{f({g^{-1}})} f'(g^{-1}k)\, dg,
\]
for $f, f' \in C_c(G)$ and $k\in K$. The right action by $C^*K$ is given by
\[
(f\psi)(g) = \int_K f(kg) \psi(k) \, dk,
\]
for $f \in C_c(G)$, $\psi \in C(K)$ and $g\in G$. The representation $\pi_{\cM}$ is given by
\[
(\pi_{\cM}(\varphi)f)(g) = \int_G \varphi(g', gK)f(g'^{-1}g) \delta_G(g')^{1/2}\, dg',
\]
for $\varphi \in C_c(G, C_0(G,K))$, $f\in C_c(G)$ and $g\in G$. (We will always identify maps from $G$ to $C_0(G/K)$ with functions on $G\times G/K$.)

Consider the class
\[
[\cM] := [\cM, 0, \pi_{\cM}] \in KK(C_0(G/K)\rtimes G, C^*K)
\]
defined by $\cM$.
Let $\calH$ be a Hilbert space, and $\pi_K\colon K\to \U(\calH)$ a unitary representation. We will also use the symbol $\pi_K$ for the corresponding representation
\[
\pi_K\colon C^*K \to \cB(\calH).
\]
Let $F$ be a $K$-equivariant, bounded operator on $\calH$.

Consider the representation
\[
\pi_{C_0(G/K)\rtimes G}\colon C_0(G/K)\rtimes G \to \cB\bigl( (L^2(G) \otimes \calH)^K\bigr), 
\]
defined by
\beq{eq def pi GKG}
(\pi_{C_0(G/K)\rtimes G}(\varphi)\sigma)(g) = \int_G \varphi(g, g'K)\delta_G(g')^{1/2} \sigma(g'^{-1}g)\, dg',
\eeq
for $\varphi \in C_c(G, C_0(G/K))$, $\sigma \in  (L^2(G) \otimes \calH)^K$, and $g\in G$. Here $\delta_G$ is the modular function on $G$.

\begin{lemma} \label{lem ME KK}
There is a unitary isomorphism 
\[
\Psi\colon \cM \otimes_{C^*K} \calH \to (L^2(G) \otimes \calH)^K
\]
that intertwines the representation $\pi_{C_0(G/K)\rtimes G}$ and the representation
\[
\pi_{\cM}\otimes 1_{\calH}\colon C_0(G/K) \rtimes G \to \cB\bigl( \cM \otimes_{C^*K} \calH  \bigr), 
\]
and satisfies
\[
\Psi \circ (1_{\cM} \otimes F) = (1_{L^2(G)} \otimes F) \circ \Psi. 
\]
\end{lemma}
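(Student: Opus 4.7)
The plan is to construct $\Psi$ explicitly by a Rieffel-induction formula on the dense subspace $C_c(G)\otimes_\C \calH$ of $\cM\otimes_{C^*K}\calH$, and then verify the three required properties by direct computation. Under the Morita equivalence $\cM$, tensoring with the $C^*K$-module $\calH$ is known to realize the induced representation $\mathrm{Ind}_K^G\calH$, which has a standard model as $K$-covariant $L^2$-functions $G\to\calH$; the identification of this model with $(L^2(G)\otimes\calH)^K$ is our target for $\Psi$.

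For $f\in C_c(G)$ and $v\in\calH$, I try a formula of the shape
\[
\Psi(f\otimes v)(g) \;=\; \int_K f\bigl(\gamma_g(k)\bigr)\,\pi_K(k)^{\pm 1}v\,dk,
\]
where $\gamma_g(k)$ (a combination such as $kg$, $gk$, or $g k^{-1}$) and any modular correction involving $\delta_G$ are chosen so that (a) $\Psi(f\otimes v)$ lands in $(L^2(G)\otimes\calH)^K$, and (b) $\Psi((f\cdot\psi)\otimes v)=\Psi(f\otimes \pi_K(\psi)v)$, so that $\Psi$ descends to the $C^*K$-balanced tensor product. Both properties reduce to changes of variable $k\mapsto k_0^{-1}k$ inside the $K$-integral, combined with the explicit formula $(f\cdot\psi)(g)=\int_K f(k'g)\psi(k')\,dk'$ for the right $C^*K$-action on $\cM$.

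The key quantitative step is to show that $\Psi$ is isometric. Expanding $\bigl\langle\Psi(f\otimes v),\Psi(f'\otimes v')\bigr\rangle_{L^2(G)\otimes\calH}$ as a triple integral over $G\times K\times K$ and applying Fubini, one of the $K$-variables can be absorbed into the $G$-integration by a left-translation substitution, so the expression collapses to
\[
\int_K \int_G \overline{f(g^{-1})}\,f'(g^{-1}k)\,\bigl(v,\pi_K(k)v'\bigr)_\calH\,dg\,dk \;=\; \bigl(v,\pi_K\bigl((f,f')_{C^*K}\bigr)v'\bigr)_\calH,
\]
which is precisely the inner product in $\cM\otimes_{C^*K}\calH$. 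Density of $K$-averages of tensors $f\otimes v$ in $(L^2(G)\otimes\calH)^K$ then gives surjectivity, and $\Psi$ extends to a unitary.

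Intertwining is verified by substituting the formulas for $\pi_\cM$ and $\pi_{C_0(G/K)\rtimes G}$ into both $\Psi\circ(\pi_\cM(\varphi)\otimes 1_\calH)(f\otimes v)$ and $\pi_{C_0(G/K)\rtimes G}(\varphi)\circ\Psi(f\otimes v)$ and showing that the resulting iterated integrals over $G\times K$ coincide after a substitution that absorbs the $K$-average into the $g'$-integration; compactness, hence unimodularity, of $K$ ensures that the $\delta_G(g')^{1/2}$ factors on both sides line up. Commutation with $F$ is then immediate: $F$ acts only on the $\calH$-factor and commutes with $\pi_K(k)$ by $K$-equivariance, so it passes through both the $K$-average defining $\Psi$ and the $C^*K$-balancing. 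The main technical obstacle is the modular-function bookkeeping in the intertwining step, and it is this calculation that pins down the precise modular normalization to use in the definition of $\Psi$.
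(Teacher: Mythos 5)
Your proposal follows essentially the same route as the paper: the paper defines $\Psi$ on elementary tensors by the $K$-averaging formula $\Psi(f\otimes\xi)(g)=\int_K f(gk)\,\pi_K(k)\xi\,dk$, observes that it descends through the $C^*K$-balanced tensor product, and then simply asserts that "this map has the desired properties" (unitarity, intertwining, commutation with $F$). Your plan is the same construction, with the verification steps (isometry via the $(f,f')_{C^*K}$ inner-product computation, density for surjectivity, the $\delta_G^{1/2}$ bookkeeping in the intertwining check) spelled out rather than left implicit, and with the exact translation convention in the averaging formula to be pinned down by those checks.
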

\begin{proof}
Consider the map
\[
\Psi\colon C_c(G) \otimes_{\C} \calH \to (C_c(G) \otimes \calH)^K
\]
given by averaging over $K$:
\[
\Psi(f \otimes \xi) (g)= \int_K f(gk)\pi_K(k) \xi\, dk, 
\]
for $f \in C_c(G)$, $\xi \in \calH$ and $g \in G$. One checks that for all $\psi \in C(K) \subset C^*K$, and $f$ and $\xi$ as above,
\[
\Psi(f\psi \otimes \xi) = \Psi(f \otimes \pi_K(\psi)\xi).
\]
Hence the map $\Psi$ descends to a map
\[
C_c(G) \otimes_{C(K)} \calH \to (C_c(G) \otimes \calH)^K,
\]
still denoted by $\Psi$. 
This map has the desired properties.
\end{proof}

Next, suppose that $\calH$ has a $K$-invariant $\Z_2$-grading, $F$ is odd and self-adjoint, and the triple
$
(\calH, F, \pi_K)
$
is a Kasparov $(C^*K, \C)$-cycle. Let
\[
[F] \in KK(C^*K, \C)
\]
be its class.
\begin{proposition} \label{prop KK F}
The triple
\begin{equation} \label{eq KK module 1}
\bigl( (L^2(G) \otimes \calH)^K, 1_{L^2(G)}\otimes F, \pi_{C_0(G/K)\rtimes G} \bigr)
\end{equation}
is a Kasparov $(C_0(G/K) \rtimes G, \C)$-cycle, and its class in $KK(C_0(G/K) \rtimes G, \C)$ equals
\[
[\cM] \otimes_{C^*K} [F].
\]
\end{proposition}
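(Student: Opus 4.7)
The plan is to reduce the proposition to a standard description of the Kasparov product with a Morita equivalence bimodule, using the unitary isomorphism $\Psi$ constructed in Lemma \ref{lem ME KK}. Since $\Psi$ intertwines the representations of $C_0(G/K) \rtimes G$ and commutes with the operators on both sides (each acting as $1 \otimes F$ on the appropriate factor), the triple \eqref{eq KK module 1} is unitarily equivalent to
\[
\bigl( \cM \otimes_{C^*K} \calH,\ 1_{\cM} \otimes F,\ \pi_{\cM} \otimes 1_{\calH} \bigr).
\]
It therefore suffices to show that this tensored triple is a Kasparov $(C_0(G/K)\rtimes G, \C)$-cycle whose class equals $[\cM]\otimes_{C^*K} [F]$.

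For the cycle axioms, self-adjointness of $1_{\cM} \otimes F$ is inherited directly from $F$. The remaining conditions require that $[\pi_{\cM}(\varphi)\otimes 1,\, 1\otimes F]$ and $(\pi_{\cM}(\varphi)\otimes 1)((1\otimes F)^2 - 1)$ be compact for every $\varphi \in C_c(G, C_0(G/K))$. Since $\cM$ is a Morita equivalence bimodule, $\pi_{\cM}(\varphi)$ lies in the algebra of compact $C^*K$-linear operators on $\cM$, and can be approximated in norm by finite sums of rank-one operators $\theta_{e,e'}\colon m \mapsto e \cdot (e', m)_{C^*K}$, with $e, e' \in \cM$. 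On elementary tensors these act by
\[
(\theta_{e,e'}\otimes 1_{\calH})(m \otimes v) = e \otimes \pi_K\bigl((e', m)_{C^*K}\bigr) v,
\]
so a direct computation expresses the commutator $[\theta_{e,e'}\otimes 1,\, 1\otimes F]$ and the product $(\theta_{e,e'}\otimes 1)(1\otimes(F^2-1))$ in terms of the expressions $[\pi_K((e',m)_{C^*K}), F]$ and $\pi_K((e',m)_{C^*K})(F^2 - 1)$. These are compact since $(\calH, F, \pi_K)$ is a Kasparov $(C^*K, \C)$-cycle, and compactness passes to norm limits.

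For the identification of the class, since the operator in the cycle representing $[\cM]$ is zero, the tensored triple $(\cM \otimes_{C^*K} \calH,\ 1_{\cM} \otimes F,\ \pi_{\cM} \otimes 1_{\calH})$ is exactly the standard representative of the Kasparov product $[\cM] \otimes_{C^*K} [F]$: the operator $1_{\cM}\otimes F$ is trivially an $F$-connection, and the positivity condition is automatic. Transporting back along $\Psi$ yields the proposition.

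The main obstacle is the compactness verification in step two; once the Morita rank-one approximation is in place, it reduces cleanly to the Kasparov cycle axioms for $F$, so the entire argument is essentially a careful unwinding of definitions enabled by Lemma \ref{lem ME KK}.
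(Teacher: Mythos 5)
Your proposal follows essentially the same route as the paper: the paper also passes from \eqref{eq KK module 1} to the triple $\bigl(\cM\otimes_{C^*K}\calH,\ 1_{\cM}\otimes F,\ \pi_{\cM}\otimes 1_{\calH}\bigr)$ via Lemma \ref{lem ME KK}, and then identifies that triple with the product $[\cM]\otimes_{C^*K}[F]$ by citing the standard fact for Kasparov products in which the first operator is zero (Blackadar, Example 18.3.2(a)), which you instead verify by hand. Your verification is sound in substance (indeed the commutator vanishes identically, since $F$ is $K$-equivariant and hence $1\otimes F$ commutes with $a\otimes 1$ for every adjointable $a$ on $\cM$); the only point to tighten is that the expression $\pi_K\bigl((e',m)_{C^*K}\bigr)(F^2-1)$ depends on the vector $m$ rather than being a fixed operator, so to invoke the axiom $\pi_K(b)(F^2-1)\in\cK(\calH)$ you should first factor $e'=e''\,b$ with $b\in C^*K$ (or work with the maps $T_e\colon v\mapsto e\otimes v$ and their adjoints), after which compactness follows as you indicate.
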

\begin{proof}
Since the operator $F$ commutes with the representation of $C^*K$ in $\calH$, the operator $1_{\cM} \otimes F$ on $\cM \otimes_{C^*K} \calH$ is well-defined. Furthermore, the Kasparov product of $[\cM]$ and $[F]$ is represented by the Kasparov $(C_0(G/K) \rtimes G, \C)$-cycle
\[
\bigl( \cM \otimes_{C^*K} \calH, 1_{\cM}\otimes F, \pi_{\cM} \otimes 1_{\calH} \bigr).
\]
(See e.g. \cite{Blackadar}, Example 18.3.2(a).) This Kasparov cycle is unitarily equivalent to \eqref{eq KK module 1} by Lemma \ref{lem ME KK}.
\end{proof}

\subsection{Product metrics} \label{sec prod metric}

The $K$-invariant metric $B_{\kp}$ on $TM|_N$ extends to a $G$-invariant Riemannian metric on $TM$, which we still denote by $B_{\kp}$. By Proposition 3.13 in \cite{HochsSong16a}, this Riemannian metric leads to the same $G$-index as the original metric, as long as $M$ is complete with respect to $B_{\kp}$.
\begin{lemma}\label{lem M complete}
The manifold $M$ is complete in the metric $B_{\kp}$.
\end{lemma}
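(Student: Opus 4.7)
The plan is to exploit the diffeomorphism $M\cong G\times_K N$ by constructing a related complete Riemannian metric $B^M$ on $M$ that is pointwise dominated by $B_{\kp}$, and then deducing completeness of $B_{\kp}$ from a topological comparison. The building blocks are a complete product metric on $G\times N$ and a natural Riemannian submersion onto $M$.

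First, equip $G$ with the left-invariant Riemannian metric $g_G$ determined by the chosen $\Ad(K)$-invariant inner product on $\kg$; every left-invariant metric on a Lie group is geodesically complete. Since $N=p^{-1}(eK)$ is closed in the complete manifold $(M,B)$, the restricted metric $(N,B|_N)$ is complete, so the product $(G\times N,\,g_G\oplus B|_N)$ is complete. The right $K$-action on $G\times N$ given by $k\cdot(g,n)=(gk^{-1},kn)$ is free, proper, and isometric: right multiplication by $k$ is an isometry of $g_G$ by $\Ad(K)$-invariance of the inner product on $\kg$, and $K$ acts isometrically on $(N,B|_N)$ because $B$ is $G$-invariant. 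The quotient map $q\colon G\times N\to (G\times N)/K\cong M$ is therefore a principal $K$-bundle and inherits a unique Riemannian metric $B^M$ on $M$ making $q$ a Riemannian submersion. Completeness of the total space then implies $(M,B^M)$ is complete, since horizontal lifts of geodesics in $M$ are geodesics in $G\times N$ that extend for all time.

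The key step is the pointwise inequality $B_{\kp}\geq B^M$ on $TM$. By $G$-invariance of both metrics it suffices to verify this at a point $n\in N$. The vertical space of $q$ at $(e,n)$ is $\{(-X,X^M_n):X\in\kk\}\subset\kg\oplus T_nN$, and a direct computation parametrises its orthogonal complement (in the product metric) by $\{(Y_\kp+Y_\kk(w),w):Y_\kp\in\kp,\,w\in T_nN\}$, where $Y_\kk(w)\in\kk$ is uniquely determined by $\langle Y_\kk(w),X\rangle_\kk=B(w,X^M_n)$ for all $X\in\kk$. Evaluating the squared $B_{\kp}$-norm of $dq(Y_\kp+Y_\kk(w),w)=((Y_\kk(w))^M_n+w)+Y_\kp^M_n$ via the orthogonal decomposition $T_nM=T_nN\oplus\{X^M_n:X\in\kp\}$, and comparing to the squared $B^M$-norm $|Y_\kp|^2+|Y_\kk(w)|^2+B(w,w)$, yields the non-negative difference $B((Y_\kk(w))^M_n,(Y_\kk(w))^M_n)+|Y_\kk(w)|^2$.

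Given a $B_{\kp}$-Cauchy sequence $(m_j)$, the pointwise domination implies it is also $B^M$-Cauchy, hence converges in the complete space $(M,B^M)$ to some $m_\infty\in M$, and therefore in the manifold topology. Since $B_{\kp}$ and $B^M$ are smooth Riemannian metrics inducing this topology, they are locally bi-Lipschitz on a compact neighbourhood of $m_\infty$, so $d_{B_{\kp}}(m_j,m_\infty)\to 0$. This yields metric, and hence geodesic, completeness of $(M,B_{\kp})$. The main subtlety lies in the pointwise-domination step, an elementary linear-algebra computation that requires correctly identifying the non-orthogonal decomposition induced by the infinitesimal $\kk$-action on $T_nN$.
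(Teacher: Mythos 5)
Your argument is correct, and it starts from the same building blocks as the paper's proof: the left-invariant metric $B_G$ on $G$, completeness of $(N,B|_{TN})$ as a closed subset of the complete manifold $(M,B)$, completeness of the product metric on $G\times N$, and passage to the quotient $M\cong (G\times N)/K$. Where you genuinely diverge is the final step. The paper asserts that the distance function induced on $M$ by the Riemannian distance of $B_G\times B|_{TN}$ (i.e.\ the quotient distance, which is the distance of your submersion metric $B^M$) \emph{equals} the distance of $B_{\kp}$, and concludes directly; you instead prove only the pointwise domination $B^M\leq B_{\kp}$ and then transfer completeness via a Cauchy-sequence argument. Your inequality is in fact the safer statement: as your computation shows, the horizontal lift of a vector $w\in T_nN$ acquires a $\kk$-component $Y_{\kk}(w)$ whenever $w$ is not $B$-orthogonal to the $K$-orbit directions in $N$, so $B^M$ is strictly smaller than $B_{\kp}$ on such directions and the two distance functions need not coincide; equality requires an orthogonality condition on the slice that is not assumed. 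Your route therefore buys robustness at the cost of a short linear-algebra computation, which checks out (the defining relation $\langle Y_{\kk}(w),X\rangle = B(w,X^M_n)$ applied with $X=Y_{\kk}(w)$ gives exactly the nonnegative difference you state); it could even be shortened by observing that $\|v\|_{B^M}$ is the minimum of the product norms over all lifts of $v$, while the lift $(X,w)$ of $v=w+X^M_n$ with $X\in\kp$ has product norm exactly $\|v\|_{B_{\kp}}$. The remaining ingredients you invoke, completeness of the base of a Riemannian submersion with complete total space and the upgrade from topological convergence to $d_{B_{\kp}}$-convergence for a $d_{B_{\kp}}$-Cauchy sequence, are standard and correctly used.
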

\begin{proof}
Note that $G$ is complete in the left invariant Riemannian metric $B_G$ defined by the inner product on $\kg$ used in the definition of $B_{\kp}$. Since $M$ is complete in $B$, the slice $N$ is complete in the metric $B|_{TN}$. Hence $G\times N$ is complete in the product metric $B_{G}\times B|_{TN}$. The quotient $M$ of $G\times N$ is therefore complete in the distance function induced by the Riemannian distance on $G\times N$ (see Proposition 3.1 in \cite{Alekseevsky03}). This equals the distance function defined by the Riemannian metric $B_{\kp}$, so the claim follows.
\end{proof}

Let $L^2(\cE, B_{\kp})$ be the $L^2$-space of sections of $\cE$, defined with respect to the Riemannian density associated to $B_{\kp}$.
We use the metric $B_{\kp}$ for two reasons. The first is that Lemma \ref{lem DT DN} is true for this metric. The second is that it allows us to decompose the space $L^2(\cE, B_{\kp})$ in a way that will allow us to apply Proposition \ref{prop KK F}.

For this decomposition, we 
consider the map
\beq{eq Phi}
\Phi\colon \bigl(C_c(G)\otimes \Gamma_c(\cE|_N)\bigr)^K \to \Gamma_c(\cE)
\eeq
given by
\[
\Phi(\varphi \otimes s)(gn) = \varphi(g) g\cdot s(n),
\]
for $\varphi \in C_c(G)$ and $s\in \Gamma_c(\cE|_N)$ such that $\varphi \otimes s$ is $K$-invariant,  and $g\in G$ and $n\in N$. In general the $K$-invariant simple tensors of the form $\varphi \otimes s$ may not span the whole space $\bigl(C_c(G)\otimes \Gamma_c(\cE|_N)\bigr)^K$. Then we extend $\Psi$ linearly to sums of tensors that are $K$-invariant, while their individual terms may not be. We will tacitly use this convention in the rest of this section.
\begin{lemma} \label{lem Phi unitary}
The map $\Phi$ extends to a $G$-equivariant, unitary isomorphism
\[
\Phi\colon \bigl(L^2(G)\otimes L^2(\cE|_N)\bigr)^K \to L^2(\cE, B_{\kp}).
\]
\end{lemma}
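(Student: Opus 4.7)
The plan is to establish well-definedness, the isometry property, surjectivity, and $G$-equivariance in turn.

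First, I would check that $\Phi$ is well-defined on $K$-invariants. The $K$-action on $C_c(G)\otimes \Gamma_c(\cE|_N)$ is the diagonal action $k\cdot(\varphi\otimes s) = R_k\varphi \otimes (k\cdot s)$, where $R_k\varphi(g) = \varphi(gk)$ and $(k\cdot s)(n) = k\cdot s(k^{-1}n)$. The $K$-invariance condition translates exactly into the requirement that $\sum_i \varphi_i(g)\, g\cdot s_i(n)$ depends only on the product $gn \in M = G\times_K N$, so $\Phi(\sum_i \varphi_i \otimes s_i)$ defines a section of $\cE$ on $M$.

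The core step is the isometry. Unitarity of the $G$-action on the fibres of $\cE$ gives
\[
\bigl|\Phi(F)(gn)\bigr|^2_{\cE_{gn}} = \sum_{i,j} \varphi_i(g)\overline{\varphi_j(g)}\langle s_i(n), s_j(n)\rangle_{\cE_n},
\]
which is a $K$-invariant function on $G\times N$ descending to $|\Phi(F)|^2$ on $M$. By Fubini, its integral against $dg\otimes dn$ (Haar on $G$, Riemannian density on $N$ from $B|_N$) equals $\|F\|^2$ in $L^2(G)\otimes L^2(\cE|_N)$. What remains is the geometric identity
\[
\int_{G\times N} h(g,n)\, dg\, dn = \int_M \bar h(m)\, d\text{vol}_{B_{\kp}}(m)
\]
for any $K$-invariant integrable $h$ on $G\times N$, with Haar on $K$ normalised to $1$. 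I would verify this at a point $n_0 \in N \subset M$ using slice coordinates $(n, Y)\mapsto \exp(Y)\cdot n$ from a neighbourhood of $(n_0, 0)$ in $N\times \kp$ to a neighbourhood of $n_0$ in $M$: the differential at $(n_0, 0)$ sends $(v, Y)$ to $v + Y^M_{n_0}$, which is by the very construction of $B_{\kp}$ an isometry $(T_{n_0}N, B|_N)\oplus (\kp, \langle\cdot,\cdot\rangle_\kp) \to (T_{n_0}M, B_{\kp})$. Combining this with the analogous decomposition of Haar on $G$ near $e$ as a product of Haar on $K$ and the $G$-invariant measure on $G/K$ induced by the $\kp$-inner product, and using $G$-invariance of both sides to propagate the identity from a single slice point, gives the desired equality.

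Surjectivity uses the correspondence $s \leftrightarrow \tilde s$ with $\tilde s(g,n) := g^{-1}\cdot s(gn)$, which identifies sections of $\cE$ on $M$ with $K$-invariant sections of the pullback of $\cE|_N$ to $G\times N$. For $s\in \Gamma_c(\cE)$, the support of $\tilde s$ is $\alpha^{-1}(\supp s)$, where $\alpha(g,n) = gn$, which is compact because $\alpha$ is a proper principal $K$-bundle map (preimage of a compact set under a principal bundle projection with compact structure group). Approximating $\tilde s$ in $L^2(G\times N, dg\otimes dn)$ by finite sums of simple tensors and then $K$-averaging shows that $\Phi$ has dense image. Finally, $G$-equivariance is immediate: $\Phi((g_0\cdot\varphi)\otimes s)(gn) = \varphi(g_0^{-1}g)\, g\cdot s(n) = (g_0\cdot \Phi(\varphi\otimes s))(gn)$.

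The main obstacle is the measure identification in the isometry step: verifying carefully that $d\text{vol}_{B_{\kp}}$ on $M$ is exactly the measure induced on the quotient $M = (G\times N)/K$ from $dg\otimes dn$, with normalised Haar on $K$. This depends precisely on the product-type local structure of $B_{\kp}$ along the slice $N$, encoded in the orthogonal splitting $TM|_N = TN \oplus (N\times \kp)$ that defined $B_{\kp}$ in the first place.
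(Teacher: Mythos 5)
Your argument is correct and follows essentially the same route as the paper: reduce unitarity of $\Phi$ to the identification of the Riemannian density of $B_{\kp}$ on $M = G\times_K N$ with the measure induced from $dg\otimes dn$ on $G\times N$. The one substantive difference is that where the paper obtains this identification from the Bourbaki quotient-measure construction plus a citation (Lemma 5.1 in \cite{Hochs14}), you prove it directly: both measures are $G$-invariant and $M = G\cdot N$, so it suffices to compare densities at points of $N$, which you do via the slice chart $(n,Y)\mapsto \exp(Y)\cdot n$, whose differential along $N$ is a $B_{\kp}$-isometry by the very construction of $B_{\kp}$; this makes the step self-contained where the paper is not. One point you should make explicit: the equality of measures, and hence exact (rather than up-to-a-positive-scalar) unitarity of $\Phi$, requires the Haar measure $dg$ defining $L^2(G)$ to be normalised compatibly with the inner product on $\kg$, namely so that $dg$ disintegrates as the $G$-invariant measure on $G/K$ determined by the inner product on $\kp$ times the probability Haar measure on $K$; this is the same convention implicit in the paper's citation. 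A minor wrinkle in your surjectivity step: the $K$-average of a finite sum of simple tensors need not lie in the algebraic tensor product $C_c(G)\otimes\Gamma_c(\cE|_N)$, but this is repaired by first projecting $\varphi$ and $s$ onto finitely many $K$-isotypical components (which preserves compact supports), after which the average is again a finite sum of tensors; the paper simply asserts density of the image, so your sketch is if anything more detailed. Your well-definedness and equivariance checks agree with what the paper leaves to the reader.
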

\begin{proof}
Equivariance of $\Phi$ follows directly from the definitions. For surjectivity, note that $\Phi \bigl( \bigl(C_c(G)\otimes \Gamma_c(\cE|_N)\bigr)^K\bigr)$ is dense in $L^2(\cE, B_{\kp})$.
To show that $\Phi$ is an isometry, we consider the $G$-invariant measure $d[g, n]$ on $M=G\times_KN$ induced by the Riemannian density on $G\times N$ associated to the product metric $B_G \times B|_{TN}$ used in the proof of Lemma \ref{lem M complete}. (See e.g.\ \cite{Bourbaki}, Chapter VII, Section 2.2, Proposition 4b.) By a direct verification, the map $\Phi$ is unitary with respect to that measure.
One can show that the measure $d[g,n]$ equals the one given by the Riemannian density associated to $B_{\kp}$. (See Lemma 5.1 in \cite{Hochs14}.) 
\end{proof}

\subsection{Morita equivalence and the $G$-index}

The comments on the Riemannian metric $B_{\kp}$ in Subsection \ref{sec prod metric} allow us to deduce Theorem \ref{thm induction} from Proposition \ref{prop KK F}.

For any $K$-equivariant (real or complex) vector bundle $E\to N$, consider the $G$-equivariant vector bundle 
\[
G\times_K E \to M.
\]
Analogously to \eqref{eq Phi}, we have $G$-equivariant map
\[
\Phi_E\colon \bigl(C^{\infty}(G)\otimes \Gamma^{\infty}(E)\bigr)^K \to \Gamma^{\infty}(G\times_K E),
\]
given by 
\[
\Phi_E(\varphi \otimes s)(gn) = \varphi(g) g\cdot s(n),
\]
for  $\varphi \in C^{\infty}(G)$ and $s\in \Gamma^{\infty}(E)$ such that $\varphi \otimes s$ is $K$-invariant, and $g \in G$ and $n\in N$. If $E = \cE|_N$, this gives
\[
\Phi:= \Phi_{\cE|_N}\colon\bigl(C^{\infty}(G)\otimes \Gamma^{\infty}(\cE|_N)\bigr)^K \to \Gamma^{\infty}(\cE).
\]

Recall that we used the restricted connection $\nabla^{\cE|_N}$ to define the Dirac operator $D^N$ in \eqref{eq def DN}. We will also use a decomposition of the Dirac operator $D$. To define this decomposition, we recall that
we have
\[
M \cong G\times_K N
\]
as in \eqref{eq M G K}. We have a $G$-equivariant isomorphism of vector bundles
\beq{deco tangent}
TM \cong p^*T(G/K) \oplus G\times_K TN.
\eeq
This decomposition of $TM$ yields two projections
\beq{eq pGK pN}
\begin{split}
p_{G/K}\colon&TM \to p^*T(G/K);\\
p_N\colon& TM\to G\times_K TN.
\end{split}
\eeq
Identifying $T^*M\cong TM$ via the Riemannian metric as before, we obtain two partial Dirac operators
\[
\begin{split}
D_{G/K}\colon& \Gamma^{\infty}(\cE) \xrightarrow{\nabla^{\cE}} \Gamma^{\infty}(TM\otimes \cE)\xrightarrow{p_{G/K}\otimes 1_{\cE}} 
 \Gamma^{\infty}(p^*T(G/K)\otimes \cE)
\xrightarrow{c} \Gamma^{\infty}(\cE); \\
D_{N}\colon& \Gamma^{\infty}(\cE) \xrightarrow{\nabla^{\cE}} \Gamma^{\infty}(TM\otimes \cE)\xrightarrow{p_{N}\otimes 1_{\cE}} 
 \Gamma^{\infty}(G\times_K TN \otimes \cE)
\xrightarrow{c} \Gamma^{\infty}(\cE).
\end{split}
\]
Since $p_{G/K} + p_N$ is the identity map on $TM$, we have
\beq{eq decomp D}
D = D_{G/K} + D_{N}.
\eeq
This decomposition played a crucial role in the etimates in \cite{HochsSong16a}.

 The proofs of the following two lemmas are straightforward.
\begin{lemma} \label{lem conn Phi}
The following diagram commutes:
\beq{diag conn Phi}
\xymatrix{
(C^\infty(G)\otimes \Gamma^\infty(\cE|_N))^K \ar[r]^-{\Phi} \ar[dd]^-{1\otimes \nabla^{\cE|_N}}& \Gamma^{\infty}(\cE)\ar[d]^-{\nabla^{\cE}} \\
 & \Gamma^{\infty}(T^*M \otimes \cE)\ar[d]^-{p_N} \\
(C^\infty(G)\otimes \Gamma^\infty(T^*N\otimes \cE|_N))^K \ar[r]_-{\Phi_{T^*N \otimes \cE|_N}}& \Gamma^{\infty}((G\times_K T^*N) \otimes \cE).
}
\eeq
\end{lemma}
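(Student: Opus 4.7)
The plan is to verify the commutativity of \eqref{diag conn Phi} pointwise on simple $K$-invariant tensors $\varphi \otimes s$ with $\varphi \in C^{\infty}(G)$ and $s \in \Gamma^{\infty}(\cE|_N)$, and extend by linearity to the full space $\bigl(C^{\infty}(G)\otimes \Gamma^{\infty}(\cE|_N)\bigr)^K$. Since both compositions land in $\Gamma^{\infty}\bigl((G\times_K T^*N)\otimes \cE\bigr)$, I would test equality at an arbitrary point $gn \in M \cong G\times_K N$ by pairing with a tangent vector $v \in T_nN$, viewed as an element of the $(G\times_K TN)$-summand of $T_{gn}M$ via the map $v \mapsto g_* v$.

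The key geometric input is that, under the decomposition \eqref{deco tangent}, at a point $n \in N$ the $(G\times_K TN)$-summand of $T_nM$ coincides with $T_nN \subset T_nM$, and by $G$-equivariance of the splitting, at the point $gn$ it coincides with $g_*T_nN$. In particular, for $v \in T_nN$ we have $p_{G/K}(g_* v) = 0$ and $p_N(g_* v) = g_* v$, so the projection $p_N$ in the right column of \eqref{diag conn Phi} has no effect on the pairing with $g_* v$; it is enough to compute $\nabla^{\cE}_{g_* v}\Phi(\varphi\otimes s)$ directly at $gn$.

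The computation itself is then short, and uses only $G$-invariance of $\nabla^{\cE}$. With $g$ fixed, the section $n'\mapsto \Phi(\varphi\otimes s)(gn') = \varphi(g)\, g\cdot s(n')$ is obtained by applying $g$ to the section $s$ of $\cE|_N$ scaled by $\varphi(g)$, so $G$-invariance yields
\[
\nabla^{\cE}_{g_* v}\Phi(\varphi\otimes s)\big|_{gn} = \varphi(g)\, g\cdot \nabla^{\cE|_N}_v s\big|_n.
\]
The other route around the diagram sends $\varphi\otimes s$ first to $\varphi\otimes \nabla^{\cE|_N} s$ and then to $\Phi_{T^*N \otimes \cE|_N}(\varphi\otimes \nabla^{\cE|_N} s)$, whose value at $gn$ paired with $g_* v$ is, by definition of $\Phi_{T^*N\otimes \cE|_N}$, the same expression $\varphi(g)\, g\cdot \nabla^{\cE|_N}_v s\big|_n$.

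The main obstacle is purely bookkeeping: ensuring that the identifications between $T^*M$, the subbundle corresponding to $G\times_K T^*N$, and the tangent space $T_nN$ at points of $N$ all behave coherently under the diffeomorphism $M \cong G\times_K N$ and the $G$-action, and in particular that the splitting \eqref{deco tangent} is $G$-equivariant and restricts at points of $N$ to the orthogonal decomposition $T_nM = \kp\cdot n \oplus T_nN$. Once these identifications are made explicit, the commutativity follows directly from $G$-invariance of $\nabla^{\cE}$ as indicated above.
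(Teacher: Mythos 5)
Your verification is correct, and in fact the paper offers no argument at all for this lemma — it simply declares the proofs of Lemmas \ref{lem conn Phi} and \ref{lem Phi D} straightforward — so your direct pointwise check on $K$-invariant tensors, using equivariance of $\nabla^{\cE}$ and pairing against vectors $g_*v$ with $v \in T_nN$, is exactly the intended routine computation. One small clarification: the orthogonality of the splitting $T_nM = \kp\cdot n \oplus T_nN$ that you invoke holds for the metric $B_{\kp}$ (the metric in force in this part of the paper, under which $D_N$ and the projections are taken), not necessarily for the original metric $B$; equivalently, the cleanest reading of $p_N$ on the covector factor is as restriction of covectors to the $G\times_K TN$ summand (the dual of the inclusion), under which your statement that $p_N$ does not affect the pairing with $g_*v$ is immediate and metric-independent.
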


\begin{lemma} \label{lem Phi D}
One has
\[
\Phi\circ (1\otimes D^N) = D_N \circ \Phi.
\]
\end{lemma}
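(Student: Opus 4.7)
The plan is to deduce the identity directly from Lemma \ref{lem conn Phi} together with the $G$-equivariance of the Clifford action. Decomposing
\[
D^N = c_N \circ \nabla^{\cE|_N}, \qquad D_N = c \circ (p_N \otimes 1_{\cE}) \circ \nabla^{\cE}
\]
(and using the metric identification $T^*M \cong TM$ throughout), Lemma \ref{lem conn Phi} already intertwines the derivative parts:
\[
(p_N \otimes 1_{\cE}) \circ \nabla^{\cE} \circ \Phi = \Phi_{T^*N \otimes \cE|_N} \circ (1 \otimes \nabla^{\cE|_N}).
\]
What remains is to show the Clifford-action identity
\[
c \circ \Phi_{T^*N \otimes \cE|_N} = \Phi \circ (1 \otimes c_N),
\]
as maps $(C^\infty(G) \otimes \Gamma^\infty(T^*N \otimes \cE|_N))^K \to \Gamma^\infty(\cE)$; composing it with the first display yields $D_N \circ \Phi = \Phi \circ (1 \otimes D^N)$.

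I would verify the Clifford identity pointwise at an arbitrary $m = gn \in M$, on a simple tensor $\varphi \otimes (\xi \otimes e)$ with $\varphi \in C^\infty(G)$, $\xi \in T^*_n N$, and $e \in \cE_n$. On one side,
\[
\Phi(\varphi \otimes c_N(\xi)e)(gn) = \varphi(g)\, g \cdot \bigl(c_N(\xi)\, e\bigr).
\]
On the other side,
\[
\Phi_{T^*N \otimes \cE|_N}(\varphi \otimes (\xi \otimes e))(gn) = \varphi(g)\, \bigl(g_* \xi \otimes g \cdot e\bigr),
\]
where $g_* \xi$ is regarded as an element of $T^*_{gn}M$ via the inclusion $G \times_K T^*N \hookrightarrow T^*M$ arising from \eqref{deco tangent}. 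Applying $c$ and using the equivariance assumption $c(g \cdot v) = g \circ c(v) \circ g^{-1}$ together with $c|_{TN} = c_N$ gives $\varphi(g)\, c(g_* \xi)(g \cdot e) = \varphi(g)\, g \cdot c_N(\xi)\, e$, matching the first expression. The identity then extends by linearity to general (possibly non-simple) $K$-invariant sums, according to the convention introduced just before Lemma \ref{lem Phi unitary}.

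The only nontrivial point is identifying the restriction of $c$ along the subbundle $G \times_K TN \subset TM$ at $gn$ with the pushforward of $c_N$ at $n$; this is precisely where the splitting \eqref{deco tangent} and the $G$-equivariance of $c$ combine. Every other step is a direct bookkeeping against Lemma \ref{lem conn Phi}, so I expect no real technical obstacle.
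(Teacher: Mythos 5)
Your argument is correct, and it is exactly the ``straightforward'' verification the paper leaves to the reader: the connection part is Lemma \ref{lem conn Phi}, and the Clifford part follows from the equivariance $c(g\cdot v)=g\circ c(v)\circ g^{-1}$ together with $c|_{TN}=c_N$, extended by linearity via the stated convention on $K$-invariant sums. The only point worth keeping in mind is that when the lemma is used (in the proof of Theorem \ref{thm induction}) the operators are taken with respect to $B_{\kp}$ and $c_{\kp}$, for which the splitting \eqref{deco tangent} is orthogonal, so your identification of the $G\times_K T^*N$ summand of $T^*M$ with $G\times_K TN$ under the metric is indeed legitimate.
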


Let 
\[
c_{\kp}\colon (TM, B_{\kp}) \to \End(\cE)
\]
be the $G$-equivariant extension of \eqref{eq cp}.
Let $f\in C^{\infty}(M)^G$. Then
\[
\Phi \circ (1\otimes c_{\kp}(fv^{\psi}|_N)) = c_{\kp}(fv^{\psi}) \circ \Phi. 
\]
So Lemma \ref{lem Phi D} implies that
\beq{Phi def Dirac}
\Phi \circ (1\otimes D^N_{f\psi|_N}) = (D_N - \ii c_{\kp}(fv^{\psi}))\circ \Phi.
\eeq
This allows us to prove Theorem \ref{thm induction}

\medskip\noindent
\emph{Proof of Theorem \ref{thm induction}.}
Let $\rho \in C^{\infty}(M)^G$ be as in Theorem \ref{thm def Dirac G Fred}, for the Riemannian metric $B_{\kp}$ and the Clifford action $c_{\kp}$. Let $\rho_N \in C^{\infty}(N)^K = C^{\infty}(M)^G$ be as in Proposition \ref{prop DN K Fred}. Suppose
$f\in C^{\infty}(M)$ is $\max(\rho, \rho_N)$-admissible. For $t\in \R$, consider the operator
\[
D_{f\psi, t} := D_N + tD_{G/K} - \ii f c_{\kp}(v^{\psi}).
\]
Here $D_N$ and $D_{G/K}$ are defined with respect to $B_{\kp}$ and $c_{\kp}$. The arguments in Sections 4 and 5 of \cite{HochsSong16a}, with $D_{G/K}$ replaced by $tD_{G/K}$, show that for all $t>0$, the operator $D_{f\psi, t}$ is $G$-Fredholm for $p$. For $t=0$, this operator is not elliptic. So  we cannot apply the Rellich lemma as in Subsection3.5 of \cite{HochsSong16a} to show that $D_{f\psi, 0}$ is $G$-Fredholm for $p$. However, we saw in Proposition \ref{prop DN K Fred} that the operator $D^N_{f\psi}$ is $K$-Fredholm. Proposition \ref{prop KK F} therefore implies that
\beq{eq 1 times DN}
\Bigl( \bigl(L^2(G) \otimes L^2(\cE|_N)\bigr)^K, 1_{L^2(G)}\otimes \frac{D^N_{f\psi}}{\sqrt{(D^N_{f\psi})^2+1}}, \pi_{C_0(G/K)\rtimes G} \Bigr)
\eeq
is a Kasparov $(C_0(G/K)\rtimes G, \C)$-cycle. 
The isomorphism $\Phi$ 
 intertwines the representations $\pi_{C_0(G/K)\rtimes G}$ and $\pi_{G, G/K}$, up to the factor $\delta_G(g')^{1/2}$ in the definition \eqref{eq def pi GKG} of $\pi_{C_0(G/K)\rtimes}$. On pages 131 and 132 of \cite{Williams07}, it is explained how to remove this factor. 
Then
Lemma \ref{lem Phi unitary} and the equality \eqref{Phi def Dirac} imply that \eqref{eq 1 times DN} is unitarily equivalent to 
\beq{eq D0}
\Bigl(L^2(\cE),  \frac{D_{f\psi, 0}}{\sqrt{(D_{f\psi, 0})^2+1}}, \pi_{G, G/K} \Bigr).
\eeq
The  triple \eqref{eq D0} is therefore also a  Kasparov $(C_0(G/K)\rtimes G, \C)$-cycle, which is to say that  $D_{f\psi, 0}$ is $G$-Fredholm for $p$.

We conclude that for all $t\geq 0$, the operator $D_{f\psi, t}$ is $G$-Fredholm for $p$. So using an operator homotopy, we obtain
\[
\ind_G^p D_{f\psi, 0} = \ind_G^p D_{f\psi, 1} \quad \in KK(C_0(G/K)\rtimes G, \C).
\]
By Proposition 3.13 in \cite{HochsSong16a}, we have
\[
\ind_G(\cE, \psi) = \ind_G^p D_{f\psi, 1}. 
\]
Since the triples \eqref{eq 1 times DN} and \eqref{eq D0} are unitarily equivalent, Proposition \ref{prop KK F} implies that
\[
\ind_G^p D_{f\psi, 0} = [\cM] \otimes_{C^*K} \ind_K (D^N_{f\psi|_N}).
\]
The result now follows from Lemma 2.9 in \cite{HochsSong16a}.
\hfill $\square$


\section{Properties of the $G$-index} \label{sec properties}

The $G$-index of deformed Dirac operators turns out to have several interesting properties. We already saw that it equals Braverman's index if $G$ is compact (see Lemma Lemma 2.9 in \cite{HochsSong16a}). If $M/G$ is compact, we describe how it is related to the analytic assembly map and to an index used by Mathai and Zhang. We work out examples where $M = T^*(G/K)$ and where $G=\R$. The first of these examples gives rise to a $K$-homological version of the Dirac induction isomorphism described in Subsection \ref{sec DInd}.
 Finally, we
 show that for $\Spinc$-Dirac operators, the index satisfies the \emph{quantisation commutes with reduction} principle that was originally formulated in symplectic geometry.

\subsection{The analytic assembly map and the Mathai--Zhang index} \label{sec cocpt}

In this subsection, we suppose that $M/G$ is compact, but $M$ and $G$ may be noncompact. 
Then the $G$-index is closely related to the analytic assembly map and an index defined by Mathai and Zhang \cite{Mathai10}. 

Let
\begin{itemize}
\item $C^*G$ now denote the \emph{maximal} group $C^*$-algebra of $G$;
\item $\mu_M^G\colon KK^G(C_0(M), \C)\to KK(\C, C^*G)$ be the analytic assembly map \eqref{eq ass map};
\item $[1_K] \in KK(\C, C_0(G/K)\rtimes G) \cong R(K)$ be the class corresponding to the trivial representation of $K$;
\item $[1_{G}]  \in KK(C^*G, \C)$ be the class corresponding to the trivial representation of $G$, equal to the class of the  $*$-homomorphism $I^G\colon C^*G \to \C$ given on $L^1(G)$ by integrating functions over $G$.
\end{itemize}
The Mathai--Zhang index was defined in  Definition 2.4 in \cite{Mathai10}, for Dirac operators. It is a numerical index, which is defined in terms of  the $G$-invariant part of the kernel of an operator.

As before, let $p\colon M\to G/K$ be a smooth, equivariant map. Since $M/G$ is compact, the map $p$ is proper. So it induces $p^*\colon C_0(G/K)\to C_0(M)$, and hence $(p_G)^*\colon C_0(G/K)\rtimes G \to C_0(M)\rtimes G$. 
Let
\[
j^G\colon KK^G(C_0(M), \C) \to KK(C_0(M)\rtimes G, C^*G)
\]
be  the descent map (\cite{Kasparov88}, Section 3.11). 
\begin{proposition} \label{prop indices}
If $M/G$ is compact, then there are maps $\ind_G$ and $\ind_{\MZ}$, which on $K$-homology classes defined by elliptic operators are given by the $G$-index and the Mathai--Zhang index of these operators, respectively, such that the following diagram commutes:
\beq{eq diag indices 1}
\xymatrix{
KK^G(C_0(M), \C) \ar[rr]^-{(p_G)_* \circ j^G} \ar@<3pt>@/^3pc/[rrrr]^-{\mu_M^G} \ar@<-3pt>@/_4pc/[rrrrd]_-(0.8){\ind_{MZ}} \ar@<-0pt>@/_0pc/[rrd]_-(0.5){\ind_G} & &
KK(C_0(G/K)\rtimes G,  C^*G) \ar[rr]^-{[1_K] \otimes_{C_0(G/K)\rtimes G} \relbar } \ar[d]^-{\relbar  \otimes_{C^*G}[1_G]} & &
 KK(\C, C^*G) \ar[d]^-{\relbar  \otimes_{C^*G} [1_G]} \\
 & &
KK(C_0(G/K)\rtimes G, \C) \ar[rr]^-{[1_K] \otimes_{C_0(G/K)\rtimes G} \relbar}  & &
 KK(\C, \C).
 \\
}
%
%
%
\eeq
\end{proposition}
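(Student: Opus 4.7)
The approach is to \emph{define} $\ind_G$ and $\ind_{\MZ}$ as the compositions of Kasparov products shown in the diagram, observe that the inner square and all subtriangles commute formally by associativity of the Kasparov product, and then identify these compositions with the intrinsic indices on $K$-homology classes of elliptic operators. The diagram thus decomposes the proof into three independent identifications, plus a trivial associativity check.

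First, since $M/G$ is compact, $p\colon M \to G/K$ is proper, so $p^*\colon C_0(G/K)\to C_0(M)$ and $p_G^*\colon C_0(G/K)\rtimes G \to C_0(M)\rtimes G$ are well-defined $*$-homomorphisms. The commutativity of the bottom-right square is immediate: for $x \in KK(C_0(G/K)\rtimes G, C^*G)$, associativity of the Kasparov product gives
\[
[1_K]\otimes_{C_0(G/K)\rtimes G}\bigl(x\otimes_{C^*G}[1_G]\bigr) = \bigl([1_K]\otimes_{C_0(G/K)\rtimes G}x\bigr)\otimes_{C^*G}[1_G].
\]
For the top curved arrow, one identifies $[1_K]\otimes_{C_0(G/K)\rtimes G}(p_G)_*\circ j^G$ with $\mu_M^G$. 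This uses that under the Morita equivalence $C_0(G/K)\rtimes G \sim C^*K$, the class $[1_K]$ corresponds to the cutoff projection for the cocompact proper action of $G$ on $G/K$. Pulling back along $p_G^*$ produces the cutoff projection for the action on $M$ (since a cutoff on $M$ can be taken to be $c_0\circ p$ for a cutoff $c_0$ on $G/K$), and the resulting composition is exactly the standard description of $\mu_M^G$ via cutoff idempotents.

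The core step is to identify the downward composition in the middle column with $\ind_G$ on classes of elliptic operators. Given an elliptic $D$ on $\cE$ represented by the cycle $(L^2(\cE), D/\sqrt{D^2+1}, \pi_{C_0(M)})$, descent $j^G$ produces the $(C_0(M)\rtimes G, C^*G)$-cycle obtained by combining $\pi_{C_0(M)}$ with the $G$-representation on $L^2(\cE)$ and equipping $L^2(\cE)$ with the standard $C^*G$-valued inner product. Pullback by $p_G^*$ replaces $C_0(M)\rtimes G$ by $C_0(G/K)\rtimes G$ acting via $\pi^p_{G, G/K}$. The interior tensor product with $[1_G]=[I^G]$ collapses the right $C^*G$-action to $\C$; a direct computation shows that this yields, up to unitary equivalence, exactly the cycle \eqref{eq index cycle} representing $\ind_G^p(D)$. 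The $\ind_{\MZ}$ identification then follows by continuing to the bottom-right corner: the full composition pairs $\ind_G(D)$, viewed in $\hat R(K)$ via Morita, with the trivial $K$-representation, and on elliptic operators this multiplicity coincides with the $G$-trace of the kernel projection used by Mathai--Zhang.

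The main obstacle will be the $\ind_G$ identification, since the interior tensor product $\_\otimes_{C^*G}[1_G]$ requires careful bookkeeping of inner products: one must verify that the averaging over $G$ implicit in the $C^*G$-valued inner product cancels against the $I^G$-trace on $[1_G]$ so as to recover the $L^2(\cE)$-inner product (modulo normalization by the cutoff on $G/K$). A related subtlety is the final comparison with the Mathai--Zhang $G$-trace formula; the cleanest route is to go through the explicit description of $\ker_{L^2} D^\pm$ as an element of $\hat R(K)$, where the multiplicity of the trivial $K$-type can be matched termwise with the cutoff-weighted trace of the kernel projections.
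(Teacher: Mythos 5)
Your overall strategy is the same as the paper's: define $\ind_G$ and $\ind_{\MZ}$ as the indicated compositions, get the two squares from associativity/functoriality of the Kasparov product, identify the top composition with $\mu_M^G$ via the observation that $[1_K]$ is the class of a cutoff function on $G/K$ whose pullback along $p$ is a cutoff function on $M$ (this is the paper's "naturality of the assembly map with respect to $p_*$"), and reduce the $\ind_G$ identification to showing that $j^G[D]\otimes_{C^*G}[1_G]$ recovers the cycle $\bigl(L^2(\cE), D/\sqrt{D^2+1}, \pi_{G,C_0(M)}\bigr)$. The "direct computation" you defer is carried out in the paper by the explicit unitary $L^2(\cE)\otimes C^*G\otimes_{I^G}\C \xrightarrow{\cong} L^2(\cE)$, $s\otimes\psi\otimes z\mapsto z\,I^G(\psi)s$, and no cutoff normalization enters at that point, so the worry you raise there is not an actual issue.

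The one genuine gap is the Mathai--Zhang identification. The paper does not prove it: it defines $\ind_{\MZ}$ as $\bigl(\relbar\otimes_{C^*G}[1_G]\bigr)\circ\mu_M^G$ and invokes Bunke's appendix to \cite{Mathai10}, which shows this composition agrees with the Mathai--Zhang index on classes of Dirac operators. You instead propose to verify the identification along the bottom path, by matching the multiplicity of the trivial $K$-type in $\ind_G(D)\in\Rhat(K)$ "termwise" with a cutoff-weighted trace of kernel projections. This is not a routine bookkeeping step: the Mathai--Zhang index is defined (Definition 2.4 of \cite{Mathai10}) only after perturbing $D$ so that the $G$-invariant parts of the kernels become finite-dimensional, so a direct comparison with the unperturbed $L^2$-kernel, or with the trivial isotypical component of the $K$-homology class, amounts to reproving Bunke's theorem; your sketch does not supply that argument. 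Either cite Bunke's result (as the paper does, after which your diagram chase is complete), or be prepared to give the perturbation-and-trace argument in full.
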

This proposition implies that the assembly map and the $G$-index of an elliptic operator $D$ can both be recovered from the class
\begin{equation} \label{fundamental index}
(p_G)_* \circ j^G[D] \in KK(C_0(G/K)\rtimes G, C^*G).
\end{equation}
Furthermore, the Mathai--Zhang index can be recovered from either of these two indices, via the Kasparov product with $[1_G]$ and $[1_K]$, respectively.

To prove Proposition \ref{prop indices}, we consider an odd, self-adjoint, elliptic, $G$-equivariant differential operator on $\cE\to M$.
Then we have the class 
\[
[D]_{C_0(M)\rtimes G} = \Bigl[L^2(\cE), \frac{D}{\sqrt{D^2 + 1}}, \pi_{G, C_0(M)}\Bigr] \in KK(C_0(M)\rtimes G, \C)
\]
as in (the elliptic case of) Proposition 6.4 in \cite{Kasparov14}. Here $\pi_{G, C_0(M)}\colon C_0(M)\rtimes G \to \cB(L^2(\cE))$ is induced by the representations of $C_0(M)$ and $G$ in $L^2(\cE)$.

Consider the class $[D] \in KK^G(C_0(M), \C)$ defined by $D$.
\begin{lemma} \label{lem Khom class M}
We have
\[
[D]_{C_0(M)\rtimes G} = j^G[D] \otimes_{C^*G}[1_G].
\]
\end{lemma}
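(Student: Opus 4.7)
The plan is to unfold both sides of the equality as explicit Kasparov cycles and exhibit a unitary equivalence between them.

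First I would write out $j^G[D]$ using the standard descent construction. The underlying Hilbert $C^*G$-module is the completion $\widetilde{L^2(\cE)}$ of $C_c(G, L^2(\cE))$ in the $C^*G$-valued inner product
\[
\langle \xi, \eta\rangle_{C^*G}(g) \;=\; \int_G \bigl\langle \xi(h),\, h\cdot \eta(h^{-1}g)\bigr\rangle_{L^2(\cE)}\, dh.
\]
The operator is the pointwise bounded transform $\widetilde F := F \otimes 1$, where $F = D/\sqrt{D^2+1}$ acts in each fibre, and the representation $\widetilde\pi$ of $C_0(M)\rtimes G$ on this module is given by the usual descent convolution formula involving $\rho := \pi_{G,C_0(M)}|_{C_0(M)}$ together with the $G$-action on $L^2(\cE)$. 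Since $[1_G] = [(\C, 0, I^G)]$, the Kasparov product $j^G[D]\otimes_{C^*G}[1_G]$ is represented (up to the usual considerations for tensoring with a $*$-homomorphism) by the cycle
\[
\bigl( \widetilde{L^2(\cE)} \otimes_{I^G} \C, \;\; \widetilde F \otimes 1, \;\; \widetilde\pi \otimes 1\bigr).
\]

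The main step is to produce a unitary isomorphism
\[
\Psi\colon \widetilde{L^2(\cE)} \otimes_{I^G} \C \;\longrightarrow\; L^2(\cE)
\]
intertwining the operators and the representations of $C_0(M)\rtimes G$. The natural candidate is induced by the map $\xi \mapsto \int_G g\cdot \xi(g)\, dg$ from $C_c(G, L^2(\cE))$ (possibly with a modular factor $\delta_G^{1/2}$). To verify that this descends to an isometry on the balanced tensor product, I would compute directly that
\[
\bigl\|\Psi(\xi \otimes 1)\bigr\|^2_{L^2(\cE)} \;=\; I^G\bigl(\langle \xi, \xi\rangle_{C^*G}\bigr),
\]
using $G$-invariance of the inner product on $L^2(\cE)$ together with Fubini. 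Surjectivity follows from a standard density/cutoff argument: any element of $L^2(\cE)$ is approximated by $\Psi(\chi \otimes s)$ with $\chi \in C_c(G)$ of integral $1$ and $s \in L^2(\cE)$, after suitable rescaling.

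Once $\Psi$ is in place, the intertwining properties are essentially formal. The operator $\widetilde F$ is pointwise application of $F$ and commutes with the $G$-action on $L^2(\cE)$, so $\Psi\circ(\widetilde F\otimes 1) = F\circ \Psi$. For the representations, unpacking the descent formula for $\widetilde \pi$ on a simple tensor $\varphi\otimes s$ (with $\varphi \in C_c(G,C_0(M))$) and integrating through $\Psi$ recovers exactly the convolution formula defining the integrated representation $\pi_{G, C_0(M)}$ of $C_0(M)\rtimes G$ on $L^2(\cE)$.

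The principal obstacle will be bookkeeping rather than any conceptual difficulty: matching the conventions (left versus right actions, placement of $g$ versus $g^{-1}$, and the modular function $\delta_G$) in the descent formula with those in the definition of $\pi_{G,C_0(M)}$. If $G$ is unimodular this is straightforward; in the general case one absorbs a factor of $\delta_G^{1/2}$ into $\Psi$, exactly as on pp. 131--132 of \cite{Williams07}, which was already invoked in the proof of Theorem \ref{thm induction}.
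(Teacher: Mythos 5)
Your proposal is correct and takes essentially the same route as the paper's proof: write out $j^G[D]$ as an explicit descent cycle, use that the Kasparov product with $[1_G]$ is just the pushforward along the $*$-homomorphism $I^G$, and then identify the resulting module with $L^2(\cE)$ by an explicit integration/averaging unitary that intertwines the operator and the representations of $C_0(M)\rtimes G$. The only cosmetic difference is the model of the descent module — you work with the completion of $C_c(G, L^2(\cE))$, whereas the paper untwists it to $L^2(\cE)\otimes C^*G$ with the $G$-action appearing only in the representation (its unitary is $s\otimes\psi\otimes z\mapsto zI^G(\psi)s$, with no $g\cdot$); reconciling where the $G$-action sits in the inner product versus the representation is exactly the convention bookkeeping you already flag, and your proposed isometry computation would settle it.
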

\begin{proof}
One can check that
\[
j^G[D] = \Bigl[L^2(\cE)\otimes C^*G,  \frac{D_{f\psi}}{\sqrt{D_{f\psi}^2 +1}} \otimes 1, \pi \Bigr] \quad \in KK(C_0(M)\rtimes G, C^*G),
\]
where $\pi\colon C_0(M)\rtimes G \to \cB(L^2(\cE)\otimes C^*G)$ is given by
\[
\pi(\varphi \otimes f)( s\otimes \psi) (g)= \int_G \varphi(g')\psi(g'^{-1}g) f g'\cdot s \, dg',
\]
for $\varphi, \psi \in C_c(G)$, $f\in C_0(M)$, $s\in L^2(\cE)$ and $g\in G$.


The product with $[1_G]$ is just the map functorially induced by the map $I^G$.
So taking this product, we obtain
\[
\begin{split}
j^G[D] \otimes_{C^*G} [1_G]&= \Bigl[L^2(\cE)\otimes C^*G \otimes_{I_G}\C,   \frac{D_{f\psi}}{\sqrt{D_{f\psi}^2 +1}}  \otimes 1\otimes 1, \pi\otimes 1 \Bigr].
\end{split}
\]
Now we have the unitary isomorphism 
\[
\Phi\colon L^2(\cE)\otimes C^*G \otimes_{I_G}\C \xrightarrow{ \cong} L^2(\cE)
\]
given by
\[
\Phi(s\otimes \psi \otimes z) = zI^G(\psi)s,
\]
for $s \in L^2(\cE)$, $\psi \in C_c(G)$ and $z\in \C$. By a direct computation, we find that for all $\varphi, \psi \in C_c(G)$, $f\in C_0(M)$, $s \in L^2(\cE)$  and $z\in \C$,
\[
\Phi \circ (\pi\otimes 1)(\varphi\otimes f) (s\otimes \psi \otimes z) = \pi_{G, M}(\varphi \otimes f) \Phi (s\otimes \psi \otimes z).
\]
\end{proof}

\begin{remark}
Proposition 6.4 in \cite{Kasparov14} states that the class $[D]_{C_0(M)\rtimes G}$ is well-defined even if $D$ is just transversally elliptic. If $D$ is elliptic, then it also defines a class in $K^G(C_0(M), \C)$, and
Lemma \ref{lem Khom class M} is a relation between these classes.
\end{remark}

Define the map $\ind_G$ by commutativity of the following diagram:
\[
\xymatrix{
KK^G(C_0(M), \C) \ar[d]_-{j^G} \ar@<30pt>@/^3pc/[ddd]^-{\ind_G}\\
KK(C_0(M)\rtimes G, C^*G) \ar[d]_-{\relbar \otimes_{C^*G}[1_G]}\\
 KK(C_0(M)\rtimes G, \C) \ar[d]_-{(p_G)_*} \\
KK(C_0(G/K)\rtimes G, \C)
}
\]
Then Lemma \ref{lem Khom class M} shows that this map indeed equals the $G$-index on classes defined by elliptic operators.

Define the map 
\[
\ind_{\MZ}\colon KK^G(C_0(M), \C) \to KK(\C, \C) = \Z
\]
as the composition of the analytic assembly map and the Kasparov product over $C^*G$ with $[1_G]$. 
Bunke showed in the appendix to \cite{Mathai10} that this corresponds to the Mathai--Zhang index on classes of Dirac operators.

\medskip \noindent \emph{Proof of Proposition \ref{prop indices}.}
Consider the diagram
\begin{equation} \label{eq diag indices}
\hspace{-0.8cm}
\xymatrix{
KK^G(C_0(M), \C) \ar[r]^-{j^G} \ar@<0pt>@/^3pc/[rrrr]^-{\mu_M^G} \ar@<-5pt>@/_6pc/[rrrrd]_-(0.8){\ind_{MZ}} \ar@<-0pt>@/_4pc/[rrd]_-(0.8){\ind_G} &
KK(C_0(M)\rtimes G, C^*G) \ar[r]^-{(p_G)_*} \ar[d]^-{\relbar \otimes_{C^*G} [1_G]} &
KK(C_0(G/K)\rtimes G,  C^*G) \ar[rr]^-{[1_K] \otimes_{C_0(G/K)\rtimes G} \relbar } \ar[d]^-{\relbar  \otimes_{C^*G}[1_G]} & &
 KK(\C, C^*G) \ar[d]^-{\relbar  \otimes_{C^*G} [1_G]} \\
 &
 KK(C_0(M)\rtimes G, \C) \ar[r]^-{(p_G)_*} &
KK(C_0(G/K)\rtimes G, \C) \ar[rr]^-{[1_K] \otimes_{C_0(G/K)\rtimes G} }  & &
 KK(\C, \C).
 \\
}
\end{equation}
The two squares with the map $(p_G)_*$ and the products with $[1_K]$ and $[1_G]$, commute by the basic functoriality and associativity properties of $KK$-theory. 
The class $[1_K] \in KK(\C, C_0(G/K)\rtimes G)$ equals the class defined by a cutoff function on $G/K$. Hence commutativity of the top part of this diagram is one of the standard definitions of the assembly map, combined with the fact that the assembly map behaves naturally with respect to the map $p_*$.
The remaining parts of the diagram commute by the comments made above. We conclude that the whole diagram \eqref{eq diag indices} commutes. 
\hfill $\square$

\begin{remark}
In \cite{Braverman14, Mathai13}, a generalisation of the Mathai--Zhang index of deformed Dirac operators to non-cocompact actions is studied. 
If $G$ is compact, then
\[
[1_K] \otimes_{C_0(G/K)\rtimes G} \ind_G(D_{f\psi}) 
\]
is the index studied in \cite{Braverman14, Mathai13}. Since we saw in Proposition \ref{prop indices} that this is also true if $M/G$, instead of $G$,  is compact, this leads the authors to suspect that it holds in general.
\end{remark}

\begin{example}
If $G=K$ is \emph{compact}, so that $M$ is as well, then the class \eqref{fundamental index} in 
\[
KK(C^*K, C^*K) = \Hom(R(K), R(K))
\]
is taking the tensor product with the usual equivariant index of $D$. The map
\[
I^K_*\colon \Hom(R(K), R(K))  \to R(K)
\]
is now given by applying operators on $R(K)$ to the trivial representation. As expected, applying this to the class \eqref{fundamental index} yields the equivariant index of $D$, i.e.\ its image under the assembly map for compact groups.

The Kasparov product with $[1_K]\in KK(C^*K, \C)$ is the map
\[
\Hom(R(K), R(K)) \to \Hom(R(K), \Z)
\]
given by taking the dimension of the invariant part after applying an operator on $R(K)$.  Applying this map to \eqref{fundamental index} yields the map $R(K) \to \Z$ that maps the class of $V \in \hat K$ to
\[
\dim (\ind_K(D) \otimes V)^K = [\ind_K(D):V^*] = [\ind_K(D):V]. 
\]
(Here $\ind_K$ now denotes the usual equivariant index.)
In other words, we recover the fact that the $K$-index of $D$ as defined in \cite{HochsSong16a} 
is the image  in $\Rhat(K)$ of the usual $K$-index of $D$.
\end{example}

\subsection{Another relation with the assembly map}

As in Subsection \ref{sec cocpt}, assume that $M/G$ is compact. In addition, we assume  that $G/K$ is even-dimensional and equivariantly $\Spin$. Consider the Dirac induction isomorphism
\[
\DInd_K^G\colon R(K)\xrightarrow{\cong} KK(\C, C^*_rG)
\]
described in Subsection \ref{sec DInd}.
We now use the analytic assembly map with respect to the \emph{reduced} group $C^*$-algebra $C^*_rG$, which we still denote by $\mu_M^G$. 

In this setting, we can express the $G$-index in terms of the assembly map.
\begin{proposition} \label{prop ass Gind}
Under the identification $KK(C_0(G/K)\rtimes G, \C) = \Rhat(K)$ via Morita equivalence, we have
\beq{eq ass Gind}
\ind_G(D_M) = (\DInd_K^G)^{-1}\bigl(\mu_M^G[D_M] \bigr) \otimes S_{\kp} \quad \in R(K) \hookrightarrow \Rhat(K).
\eeq
\end{proposition}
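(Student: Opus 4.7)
My plan is to reduce both sides of the asserted identity to the $K$-equivariant index of a Dirac operator on the slice $N = p^{-1}(eK)$ and show they agree. Since $M/G$ is compact, $N$ is compact, and the zero set of the trivial vector field $v^0 \equiv 0$ is all of $M$, hence cocompact. The deformed-Dirac framework of Subsection~\ref{sec def Dirac} therefore applies with $\psi = 0$, and the hypothesis $\psi(N)\subset \kk$ of Corollary~\ref{cor induction psi N k} is trivially satisfied. Applying that corollary with $\psi = 0$, and using that $D^{\cE_N}$ is elliptic on the compact manifold $N$ and hence $K$-Fredholm with index in $R(K)$, gives
\[
\ind_G(D_M) = \ind_K(D^{\cE_N}) \otimes [S_\kp] \quad \in R(K) \hookrightarrow \Rhat(K),
\]
where $\cE_N = \Hom_{\Cl(N\times \kp)}(N\times S_\kp, \cE|_N)$.

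To match this with the right-hand side, I will prove the naturality identity
\[
\mu_M^G[D_M] = \DInd_K^G\bigl([\ind_K D^{\cE_N}]\bigr) \quad \in KK(\C, C^*_r G).
\]
The geometric content is that under $M = G\times_K N$ and $\cE|_N = \cE_N \otimes S_\kp$, the operator $D_M$ is obtained by inducing the $K$-equivariant operator $D^{\cE_N}$ on the fiber $N$ from $K$ to $G$, coupled with Clifford multiplication in the $\kp$-directions that realises the $G$-equivariant spin Dirac operator on $G/K$. At the level of $KK$-theory this becomes the statement that, under the Green--Julg-type isomorphism $KK^G(C_0(G/K), \C)\cong R(K)$ (sending $V\mapsto [D_V]$), the topological pushforward $p_![D_M]\in KK^G(C_0(G/K),\C)$ corresponds to $V_0 := \ind_K D^{\cE_N} \in R(K)$, i.e.\ $p_![D_M] = [D_{V_0}]$. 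Then the naturality $\mu_M^G[D_M] = \mu_{G/K}^G p_![D_M]$ of the assembly map, together with the definition $\DInd_K^G(V_0) = \mu_{G/K}^G[D_{V_0}]$ of Dirac induction, delivers the identity.

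Combining the two displayed equations and applying the isomorphism $(\DInd_K^G)^{-1}$ furnished by the Connes--Kasparov conjecture recalled in Subsection~\ref{sec DInd} yields the proposition. The main obstacle is the naturality identity for $\mu_M^G[D_M]$: while it expresses a folklore multiplicativity of the assembly map under induction from the maximal compact subgroup, a careful proof requires either constructing the pushforward $p_!$ explicitly and checking its compatibility with assembly, or equivalently realising $[D_M]$ as a Kasparov product of the form $[D^{\cE_N}]\otimes_{C^*K}[D_{G/K}^{\Spin}]$ via the slice decomposition and then invoking associativity of the Kasparov product with descent.
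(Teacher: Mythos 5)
Your proposal follows essentially the same route as the paper: take $\psi = 0$ (so the hypothesis $\psi(N)\subset\kk$ of Corollary \ref{cor induction psi N k} holds trivially), deduce $\ind_G(D_M) = \ind_K(D^{\cE_N})\otimes S_{\kp}$ under the Morita identification, and then invoke the identity $\mu_M^G[D_M] = \DInd_K^G\bigl(\ind_K(D^{\cE_N})\bigr)$ together with the Connes--Kasparov isomorphism. The only difference is that the paper disposes of that last identity by citing Theorem 4.5 in \cite{Hochs09} and Theorem 4.8 in \cite{Mathai14} (proved there for $\Spinc$-Dirac operators, by arguments that apply to general Dirac operators), whereas you sketch a pushforward/Kasparov-product argument for it and correctly flag it as the step needing the careful induction analysis carried out in those references.
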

\begin{proof}
Since we may take $\psi = 0$ in the cocompact case, the condition in Subsection \ref{sec psi N k} that $\psi(N)\subset \kk$ is automatically satisfied. By Corollary \ref{cor induction psi N k}, we therefore have
\beq{eq ind DM DN}
 \ind_G(D_M) =  [\cM] \otimes_{C^*K} \bigl(  \ind_K (D^{\cE_N})\otimes S_{\kp}\bigr).
\eeq
Hence the claim follows from the fact that
\[
\mu_M^G[D] = \DInd_K^G \bigl(\ind_K(D^{\cE_N})\bigr).
\]
This was proved for $\Spinc$-Dirac operators in Theorem 4.5 in \cite{Hochs09} and Theorem 4.8 in \cite{Mathai14}. The arguments apply to general Dirac operators, however.
\end{proof}

Proposition \ref{prop ass Gind} implies that the $G$-index of Dirac operators is determined by the assembly map. Conversely, the index $\mu_M^G[D_M]$ can be expressed in terms of $\ind_G(D_M)$ precisely if tensoring with $S_{\kp}$ is an injective operation on $R(K)$.
\begin{lemma}
Suppose that $G$ is semisimple with discrete series, i.e.\ $\rank(G) = \rank(K)$. Then the map from $R(K)$ to $R(K)$ given by the tensor product with $S_{\kp}$ is injective.
\end{lemma}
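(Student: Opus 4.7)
The plan is to reduce the problem to multiplication by a nonzero element in an integral domain, by restricting everything to a maximal torus. Since $G$ is semisimple with $\rank(G) = \rank(K)$, we can choose a common maximal torus $T \subset K \subset G$. Let $\Delta_n \subset i\kt^*$ denote the set of noncompact roots of $(\kg_\C, \kt_\C)$; these are precisely the weights of $\kp_\C$ under $T$, and they come in pairs $\pm \alpha$ since $\kp$ is a real representation of $T$. Fix a system $\Delta_n^+$ of positive noncompact roots.

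The first step is to compute the character of the virtual representation $[S_\kp^+]-[S_\kp^-]$ of $T$ (or of a double cover of $T$, in case $G/K$ is not equivariantly $\Spin$, via the mechanism described at the end of Subsection \ref{sec DInd}). A standard computation using the structure of the spinor module gives
\[
\mathrm{ch}\bigl([S_\kp^+]-[S_\kp^-]\bigr) \;=\; \prod_{\alpha \in \Delta_n^+}\bigl(e^{\alpha/2} - e^{-\alpha/2}\bigr),
\]
which is a nonzero element of $R(T)$, since $R(T)$ is the group ring of the (possibly enlarged) weight lattice and the displayed product is a nontrivial finite signed sum of distinct characters.

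The second step uses two classical facts: the restriction homomorphism $R(K) \to R(T)$ is injective (for example by the Weyl character formula, or because every class function on $K$ is determined by its values on $T$), and $R(T)$ is the group ring $\Z[\hat T]$ of a finitely generated free abelian group, hence isomorphic to a Laurent polynomial ring $\Z[x_1^{\pm 1}, \dots, x_r^{\pm 1}]$, which is an integral domain. Putting these together: if $V \in R(K)$ satisfies $[V]\otimes [S_\kp] = 0$ in $R(K)$, then restricting to $T$ we obtain $\mathrm{ch}(V)\cdot \mathrm{ch}([S_\kp]) = 0$ in $R(T)$; since $R(T)$ has no zero divisors and $\mathrm{ch}([S_\kp])\neq 0$, we conclude $\mathrm{ch}(V)=0$, and then $[V]=0$ in $R(K)$ by injectivity of the restriction map.

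There is no real obstacle here; the only subtle point is to make sense of the half-integer exponents $e^{\alpha/2}$ in the character formula when $G/K$ itself is not equivariantly $\Spin$, which is handled by passing to the double cover $\tilde G$ as in Subsection \ref{sec DInd}, so that the relevant weights $\alpha/2$ live in the weight lattice of the enlarged torus $\tilde T$ and the argument proceeds verbatim in $R(\tilde T)$.
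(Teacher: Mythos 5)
Your proof is correct, and it rests on the same character identity as the paper's proof --- the restriction of $\chi_{S_{\kp}} = \chi_{S_{\kp}^+} - \chi_{S_{\kp}^-}$ to a common maximal torus $T$ (which exists by the equal rank hypothesis) is $\prod_{\alpha}(e^{\alpha/2}-e^{-\alpha/2})$, the product over positive noncompact roots --- but the injectivity mechanism is genuinely different. The paper argues pointwise: this product is nonzero at every regular element of $T$, hence $\chi_{S_{\kp}}$ is nonvanishing on the open dense subset $K\cdot T^{\reg}\subset K$, so $\chi_V\,\chi_{S_{\kp}}=0$ forces the continuous class function $\chi_V$ to vanish on a dense set and therefore identically. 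You instead argue algebraically: the restriction $R(K)\to R(T)$ (or into $R(\tilde T)$ after passing to the double cover) is injective, $R(T)$ is a Laurent polynomial ring and hence an integral domain, and the product is a nonzero element of it; in fact, since each factor $e^{\alpha/2}-e^{-\alpha/2}$ is already nonzero, the domain property alone gives nonvanishing of the product, so your side remark about the expansion into distinct characters (where one would otherwise have to rule out cancellation, e.g.\ by isolating the extreme weight $e^{\rho_n}$) is not even needed. Your route avoids any appeal to density or continuity and needs no citation beyond the standard spinor character computation; the paper's pointwise statement is slightly stronger, and it is precisely what justifies the remark immediately following the lemma, where the character of the preimage of $\mu_M^G[D_M]$ under Dirac induction is obtained by literally dividing the character of $\ind_G(D_M)$ by $\chi_{S_{\kp}}$ where the latter is nonzero. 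Your treatment of the half-integral weights via the double cover $\tilde G$, $\tilde T$ is correct and consistent with the convention of Subsection \ref{sec DInd}.
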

\begin{proof}
Let $T^{\reg}$ be the set of regular elements in a maximal torus $T$ of $K$.
Since $\rank(G) = \rank(K)$,  it was noted in Remark 2.2 in \cite{Parthasarathy72} that the character $\chi_{S_p} = \chi_{S_{\kp}^-} - \chi_{S_{\kp}^-}$ of $S_{\kp}$ satisfies
\[
\chi_{S_{\kp}}|_{T} = \prod_{\alpha \in R_n^+} (e^{\alpha/2} - e^{-\alpha/2}),
\]
where $\alpha$ runs over a set of positive noncompact roots. This function is nonzero on $T^{\reg}$, and hence on  the open dense subset $K \cdot T^{\reg} \subset K$, where $K$ acts on itself by conjugation. Therefore, multiplication by $\chi_{S_{\kp}}$ is injective.
\end{proof}
In the setting of this lemma, we have
\[
\mu_M^G[D_M] = \DInd_K^G[V],
\]
where the character of $[V] \in R(K)$ equals the character of $\ind_G(D_M)$ divided by $\chi_{S_{\kp}}$.

%
%

\begin{example}
Suppose that $M = G/K$. For $V\in \hat K$,  let $D_V$ be the Dirac operator on the Clifford module $G\times_K(S_{\kp} \otimes V)\to G/K$, as defined in \eqref{eq def DV}. Since $M/G$ is compact, this operator is $G$-Fredholm without the need of a deformation term. Since $M$ is now a homogeneous space, all operators on $G$-equivariant vector bundles over $M$ are transversally elliptic, including the zero operator. Therefore, Proposition 6.4 in \cite{Kasparov14} implies that
\[
\ind_G(D_V) = \bigl[ (L^2(G)\otimes S_{\kp} \otimes V)^K, 0, \pi_{G, G/K} \bigr] \quad \in KK(C_0(G/K)\rtimes G, \C),
\]
via a linear operator homotopy. By Proposition 2.10 in \cite{HochsSong16a}, this class corresponds to $S_{\kp} \otimes V \in \Rhat(K)$. 
The right hand side of \eqref{eq ass Gind} for this operator equals $V\otimes S_{\kp}$, so that we obtain an independent verification of Proposition \ref{prop ass Gind} in this case.

One could view the $G$-index of $D_V$ as a $K$-homological analogue of the Dirac induction of $V$. The twist by $S_{\kp}$ one obtains in this way makes this slightly unnatural though, also in view of Remark \ref{rem Sp zero}. In Subsection \ref{sec TGK}, we define a more natural notion of $K$-homological Dirac induction, using a non-cocompact action.
\end{example}

\subsection{$K$-homological Dirac induction} \label{sec TGK}

We assume that $G/K$ is even-dimensional and equivariantly $\Spin$. The Dirac induction isomorphism
\[
\DInd_K^G\colon R(K) \to K_*(C^*_rG)
\]
 involves Dirac operators on $G/K$. Using the $G$-index, we define a version of Dirac induction in terms of Dirac operators on the  non-cocompact manifold 
\[
M = T^*(G/K) \cong G \times_K \p.  
\]
(We identify $\kp \cong \kp^*$ using a fixed $K$-invariant inner product on $\kg$.)
Consider the map $p\colon T^*(G/K)\to G/K$ given by $p([g, X]) = gK$, for $g\in G$ and $X\in \kp$.
As in (\ref{deco tangent}), we have a $G$-equivariant decomposition of the tangent bundle 
\[
TM \cong p^*T(G/K) \oplus G\times_K (T\p).
\]
Consider the $K$-equivariant vector bundle
 $\cE_\p := \p \times S_\p \to \kp$, and the $G$-equivariant vector bundle $\cE_{G/K} := G \times_K S_\p \to G/K$.
Let us form
\[
\cE_M := p^*\cE_{G/K} \otimes \big(G\times_K \cE_\p),
\]
which defines a $G$-equivariant spinor bundle on $M$. Here, and in the remainder of this subsection, we use \emph{graded} tensor products. Note that the bundle $\cE_M$ contains two factors $S_{\kp}$, coming from the spinor bundles on $G/K$ and $\kp$.
 The natural Clifford actions by $T\kp$ on $\cE_\p$ and by $T(G/K)$ on $\cE_{G/K}$ (both denoted by $c$) combine to a 
 Clifford action 
\[
c\colon TM \cong p^*T(G/K) \oplus G\times_K (T\p) \to \End(\cE_M), 
\]
given by
\[
c(v, w) = c(v) \otimes 1 + 1 \otimes c(w),
\]
for $v\in p^*T(G/K)$ and $w\in T\kp$.

Let $V \in \hat K$.
Consider the $G$-equivariant vector bundle
\[
 \cE_V := \cE_M \otimes ( G \times_K (\p \times V)) \to M. 
\]
By  construction, we have
\beq{iso space}
\Gamma^\infty(\cE_V) \cong \bigl( C^\infty(G) \otimes S_\p \otimes V  \otimes C^\infty(\p, S_\p) \bigr)^K. 
\eeq
Let $D_\p$ be the $K$-equivariant Dirac operator on $C^\infty(\p, S_\p)$  so that
\[
[D_\p] \in KK^K(C_0(\p), \C)
\] 
is the \emph{fundamental class}. Let $D_{V}$ be the operator on 
\[
 C^\infty(G) \otimes S_\p  \otimes V
\]
defined as in \eqref{eq def DV}. We take the Dirac operator $D_M^V$ on \eqref{iso space} to be 
\[
D_M^V := D_{V} \otimes 1 + 1 \otimes D_\p
\]

Let $\psi\colon M \to \g$ be the $G$-equivariant map defined by
\[
\psi[g, X] =\Ad(g)X,
\] 
for $g\in G$ and $X\in \kp$. The vanishing set $\Zeroes(v^{\psi})$ is the zero section of the vector bundle $G\times_K \p \to G/K$, and hence cocompact. Let $\rho \in C^{\infty}(M)^G$ be as in 
 Theorem \ref{thm def Dirac G Fred}, and let $f\in C^{\infty}(M)^G$ be $\rho$-admissible.
Consider the deformed Dirac operator 
\[
D_{f\psi}^V := D_M^V - \ii f c(v^\psi).
\]
on $\cE_V$.
It has a $G$-index 
\[
\ind_G(\cE_V, \psi) \in KK(C_0(G/K)\rtimes G, \C). 
\]
\begin{proposition} \label{prop TGK}
Under the isomorphism $KK(C_0(G/K)\rtimes G, \C) \cong \Rhat(K)$ given by Morita equivalence, we have
\[
\ind_G(\cE_V, \psi) = [V].
\]
\end{proposition}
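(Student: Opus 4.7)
The plan is to apply Theorem~\ref{thm induction} to reduce the computation of $\ind_G(\cE_V,\psi)$ to a $K$-equivariant index on the slice $N = p^{-1}(eK) = \kp$, and then to identify that operator as the tensor product of $1_V$ with a Bott--Dirac operator on $\kp$ whose $K$-equivariant index is the trivial representation. Note that $\psi(N)=\kp\not\subset\kk$, so Corollary~\ref{cor induction psi N k} does not apply and one must use the full induction theorem.

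First I would identify $\cE_V|_N$, $D^N$, and $c_\kp(v^\psi|_N)$ explicitly. Since $p|_N$ is constant, the bundles $p^*\cE_{G/K}$, $G\times_K\cE_\kp$ and $G\times_K(\kp\times V)$ restrict over $N$ to the trivial bundles $\kp\times S_\kp$, $\kp\times S_\kp$ and $\kp\times V$, respectively, so
\[
\cE_V|_N \cong \kp\times(S_\kp\otimes S_\kp\otimes V),
\]
and the restriction of any $G$-invariant Clifford connection is flat on each tensor factor. Under the decomposition $TM|_N\cong TN\oplus(N\times\kp)$ of \eqref{eq TMN}, the Clifford action $c_\kp$ of the summand $TN=T\kp$ acts on the middle $S_\kp$ factor (coming from $\cE_\kp$), while the action of the $N\times\kp$ summand acts on the leftmost $S_\kp$ factor (coming from $p^*\cE_{G/K}$); denote the latter by $c_1$. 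Hence $D^N = 1\otimes D_\kp\otimes 1_V$. A direct computation using $\psi[e,X]=X$ shows that $v^\psi_X=\ddt[\exp(-tX),X]$ corresponds to $(0,-X)\in T_XN\oplus\kp$ under \eqref{eq TMN}, so $c_\kp(v^{f\psi}|_N)$ acts as $-f\cdot c_1(X)$ on the leftmost $S_\kp$. Thus
\[
D^N_{f\psi|_N}=A\otimes 1_V, \qquad A:=1\otimes D_\kp+\ii f\cdot c_1(X)\otimes 1,
\]
an operator on $C^\infty(\kp,S_\kp\otimes S_\kp)$.

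The operator $A$ is precisely the $K$-equivariant Bott--Dirac operator on $\kp$: it is the deformation, by the identity map $\kp\to\kp$, of the $S_\kp$-twisted Dirac operator on $\kp$. Under the standard $K$-equivariant isomorphism $S_\kp\otimes S_\kp\cong \Wedge^*\kp^*_\C$, $A$ corresponds to Witten's deformed de Rham operator for the strictly convex Morse function $\tfrac12\|X\|^2$; its unique critical point is the origin, of Morse index zero, so the $L^2$-kernel of $A$ is one-dimensional, sits in the even part of the grading, and carries the trivial $K$-representation. Consequently $\ind_K(A)=1\in R(K)$ and $\ind_K(A\otimes 1_V)=[V]$, and Theorem~\ref{thm induction} gives $\ind_G(\cE_V,\psi)=[V]$ in $R(K)\subset\Rhat(K)$.

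The main obstacle is the sign in the calculation of $v^\psi|_N$: one must verify that it corresponds to $(0,-X)$ rather than $(0,+X)$ in $T_XN\oplus\kp$ under \eqref{eq TMN}, since this is what ensures the deformation produces the confining Morse function $+\tfrac12\|X\|^2$ whose ground state lies in $L^2$. Similar care with the grading convention on $S_\kp\otimes S_\kp$ is required when identifying $A$ with Witten's Laplacian, so that the one-dimensional ground state contributes $+[V]$ rather than $-[V]$ to the graded index.
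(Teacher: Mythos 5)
Your overall strategy is the same as the paper's: apply Theorem \ref{thm induction} to pass to the slice $N=\kp$ (correctly noting that $\psi(N)=\kp\not\subset\kk$, so Corollary \ref{cor induction psi N k} is unavailable), identify $\cE_V|_N\cong \kp\times(S_\kp\otimes S_\kp\otimes V)$ with the two Clifford actions distributed over the two $S_\kp$ factors as you describe, and recognise the slice operator as (Bott--Dirac operator on $\kp$)$\,\otimes 1_V$. Where you diverge is the final computation. The paper stays at the level of $KK$-classes: it shows the bounded transform of $D_f=D_\kp\otimes 1-\ii f\otimes\beta$ is an $F_\kp$-connection, so its class is the Kasparov product $[f\beta]\otimes_{C_0(\kp)}[D_\kp]$, uses the homotopy $[f\beta]=[\beta]$ (valid simply because $f>0$), and then quotes Bott periodicity (Proposition 11.4.5 in \cite{higson00}) to get the trivial representation of $K$ (Lemma \ref{index pair}). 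You instead compute the $L^2$-kernel explicitly via $S_\kp\otimes S_\kp\cong\Wedge^\bullet\kp^*_\C$ and the harmonic-oscillator/Witten argument. That is a legitimate and more hands-on route, and it meshes well with the kernel formula in Theorem \ref{thm induction}.

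However, there is a gap in that last step as you have written it. Theorem \ref{thm induction} computes the index from the $L^2$-kernel of $D^N_{f\psi|_N}$ for an \emph{admissible} function $f$, and your identification of $A=1\otimes D_\kp+\ii f\,c_1(X)\otimes 1$ with the Witten deformation for the Morse function $\tfrac12\|X\|^2$ is only literally correct when $f\equiv 1$. An admissible $f$ need not be constant, and since $f$ is only $K$-invariant (not radial), the vector field $f(X)X$ need not even be a gradient, so "Witten deformation of a Morse function" does not apply verbatim and the ground state is no longer the standard Gaussian. You must either redo the kernel/ vanishing argument for general admissible $f$, or insert an invariance statement reducing to $f\equiv 1$ — e.g.\ a homotopy of Fredholm operators through positive deformation functions, which is exactly what the paper's one-line homotopy $[f\beta]=[\beta]$ accomplishes at the $KK$-level while avoiding Fredholmness issues along the path. (Note that $f\equiv1$ itself may fail the admissibility condition required to invoke Theorem \ref{thm induction} directly, even though the resulting operator is Fredholm by the oscillator argument.) By contrast, your worry about the sign of $v^\psi|_N$ is not critical: with either sign the deformed operator is a confining Bott--Dirac operator, and its index is the trivial representation in either case (degree-zero versus top-degree Gaussian ground state; both lie in the even part since $\dim\kp$ is even, and $\Wedge^{\topp}\kp^*$ is trivial because $K$ acts through $\SO(\kp)$), though of course the convention should be stated consistently.
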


Let us prove this proposition.
We denote by $\overline{S_\p} = \p \times S_\p$ the trivial vector bundle over $\p$ with fibre $S_{\kp}$. Consider the vector bundle endomorphism  $\beta$ of $\overline{S_\p}$ given by
\[
\beta (X, \xi) := (X, c(X)\xi),
\]
for $X\in \kp$ and $\xi \in S_{\kp}$. It is invertible outside the compact set $\{0\}\subset \kp$, and hence defines a class in 
 the  ($K$-equivariant) topological $K$-theory of $\p$.  This is the \emph{Bott element}, denoted by $[\beta]$. 

We choose an orthonormal basis $\{X_1,\dots, X_{\dim \kp}\}$ of $\p$. Let $\{\eta_1, \ldots, \eta_{\dim \kp}\}$ be the dual basis of $\kp^*$. Then the identity map on $\kp$ equals  $\sum_{j=1}^{\dim \kp} X_j \otimes \eta_j$, and we have
\begin{equation}\label{beta}
\beta = \sum_{j=1}^{\dim \kp}  c(X_j) \otimes \eta_j. 
\end{equation}
Consider the operator
\[
D_{f} := D_\p \otimes 1-  1\otimes \ii f \cdot \beta
\]
on $\Gamma^{\infty}(\cE_{\kp} \otimes \overline{S_\p})$, and the bounded operator
\[
F_{f} := \frac{D_{f}}{\sqrt{1+D_{f}^2}}
\]
on $L^2(\p, \cE_\p \otimes \overline{S_\p})$.
\begin{lemma}
\label{index pair}
There is a real-valued function $\rho \in C^{\infty}(\p)^K$, such that if $f$ is $\rho$-admissible, the operator 
$F_f$
is Fredholm, and its index is given by the trivial $K$-representation. 
\end{lemma}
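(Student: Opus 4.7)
The plan is to treat this operator as a Callias-type construction on Euclidean space: Fredholmness will follow from Proposition~\ref{prop DT K Fred}, and the index will be identified with the Kasparov product of the equivariant Bott element and the equivariant Dirac class, which equals the trivial representation by equivariant Bott periodicity.

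For Fredholmness, I would invoke Proposition~\ref{prop DT K Fred} with the group $K$ acting on $N := \p$ by the adjoint representation, with $\psi = 0$, and with $T := -\ii \beta$. The endomorphism $T$ is odd and fibrewise self-adjoint, and $T^{2} = \|X\|^{2} \cdot 1$ is invertible outside the compact set $\{0\} \subset \p$. A direct local computation in the orthonormal basis $\{X_{1}, \dots, X_{\dim \p}\}$ gives
\[
\{D_{\p}, f T\} = -\ii\, c(df)_{1}\, \beta \;-\; \ii f\, \Gamma, \qquad \Gamma := \sum_{j=1}^{\dim \p} c(X_{j})_{1}\, c(X_{j})_{2},
\]
where the subscripts indicate the factor of $S_{\p} \otimes S_{\p}$ on which Clifford multiplication acts. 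Since $\Gamma$ is a bounded constant endomorphism and $\|\beta(X)\| = \|X\|$, the pointwise norm of the anticommutator is bounded above by $\Theta(\|df\| + f)$ for a smooth $K$-invariant function $\Theta$ of linear growth. The hypotheses of Proposition~\ref{prop DT K Fred} are therefore met, producing a $\rho \in C^{\infty}(\p)^{K}$ such that $F_{f}$ is $K$-Fredholm whenever $f$ is $\rho$-admissible.

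For the index computation, my preferred route is to identify $F_{f}$ with an unbounded Kasparov representative of $[\beta] \otimes_{C_{0}(\p)} [D_{\p}] \in KK^{K}(\C, \C) = R(K)$, which equals the trivial representation by Atiyah's equivariant Bott periodicity; the deformation by $f$ is the standard Callias device to make the operator have compact resolvent on each $K$-isotypic component. Alternatively, I would compute the kernel directly by reducing to $f \equiv 1$: in that case
\[
D_{1}^{2} = D_{\p}^{2} + \|X\|^{2} - \ii \Gamma
\]
is a Bott--Dirac harmonic oscillator whose $L^{2}$-kernel is spanned by $X \mapsto e^{-\|X\|^{2}/2} \eta_{0}$, with $\eta_{0} \in S_{\p} \otimes S_{\p}$ the joint vacuum characterised by $(c(Y)_{1} + \ii c(Y)_{2}) \eta_{0} = 0$ for all $Y \in \p$. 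This line is $K$-invariant (the annihilation conditions being $K$-equivariant under $\widetilde{\Ad}$) and lies in the even part of the grading, giving $\ind F_{1} = [1_{\C}]$.

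The main obstacle I expect is technical: either making the unbounded Kasparov product description of $F_{f}$ rigorous in the equivariant setting, or verifying that the family $\{D_{(1-t) f + t}\}_{t \in [0,1]}$ consists of $K$-Fredholm operators, so as to license the operator homotopy from arbitrary $\rho$-admissible $f$ to $f\equiv 1$. Both hurdles reduce to the same uniform estimates as in the Fredholmness argument, applied along the homotopy parameter.
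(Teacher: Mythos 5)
Your proposal is correct and takes essentially the same route as the paper: Fredholmness via Proposition \ref{prop DT K Fred} (equivalently Remark \ref{remark callias}) with $\psi=0$ and $T=-\ii\beta$, whose anticommutator bound you verify, and the index identified with the Kasparov product $[\beta]\otimes_{C_0(\p)}[D_\p]\in KK^K(\C,\C)$, which is the trivial representation by equivariant Bott periodicity. The technical obstacle you flag is handled in the paper entirely in the bounded picture: one checks that $F_f$ is an $F_{\kp}$-connection for the factorisation $L^2(\p,\cE_\p\otimes\overline{S_\p})\cong\Gamma_0(\overline{S_\p})\otimes_{C_0(\p)}L^2(\p,\cE_\p)$, and replaces $[f\beta]$ by $[\beta]$ via a homotopy of classes (using $f>0$), so neither an unbounded Kasparov product nor an operator homotopy to $f\equiv 1$ is needed.
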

 \begin{proof}
Since $ \overline{S_\p}$ is a trivial vector bundle over $\p$, we have
\[
L^2(\p, \cE_\p \otimes \overline{S_\p}) \cong L^2(\p, \cE_\p)\otimes S_\p.
\]
By (\ref{beta}), the operator $D_f$ can be rewritten as
\[
D_\p \otimes 1 - \ii \sum_{j=1}^{\dim \kp} f\eta_j \otimes c(X_j).
\] 
Therefore, $[D_\p, f\beta] \in \mathrm{End}(\cE_\p \otimes \overline{S_\p})$, so that $D_{f}$ is an operator as in Remark \ref{remark callias}. That remark therefore implies that there is a function $\rho$ such that $F_f$ is Fredholm if $f$ is $\rho$-admissible. Then the index of $F_f$ is the class
\[
[D_f] := \bigl[L^2(\p, \cE_\p \otimes \overline{S_\p}), F_f \bigr] \in KK^K(\C, \C).
\]

To calculate this index, we write
\[
F_{\kp}:= \frac{D_\p}{\sqrt{1+D_\p^2}}
\]
We view $\Gamma_0(\overline{S_\p})$ as a Hilbert $C_0(\p)$-module, and consider the representation of $C_0(\kp)$ in
$L^2(\p, \cE_\p )$ given by pointwise multiplication.
Then
\[
L^2(\p, \cE_\p \otimes \overline{S_\p}) \cong \Gamma_0(\overline{S_\p}) \otimes_{C_0(\p)} L^2(\p, \cE_\p ).
\]
One can check that $F_{f}$ is an $F_{\kp}$-connection \cite[Definition 2.6]{Kasparov88}. 
This implies that the element $[D_f] \in KK^K(\C, \C)$ is the Kasparov product over $C_0(\kp)$ of the classes $[f\beta] \in KK^K(\C, C_0(\kp))$ and $[D_{\kp}] \in KK^K(C_0(\kp), \C)$.
(One can also find a proof in Lemma 3.1 in \cite{Kucerovsky01}.) 

Note that $f$ is a positive function. By a homotopy, we have $[f \cdot \beta] = [\beta]$.  By Proposition 11.4.5 in \cite{higson00}, the Kasparov product of $[D_\p]$ and $[\beta]$ is the trivial representation of $K$.  
\end{proof}

\medskip \noindent\emph{Proof of Proposition \ref{prop TGK}.}
By Theorem \ref{thm induction}, the Morita equivalence isomorphism maps  $\ind_G(\cE_V, \psi)$
to the
$K$-index of the operator
$
D_f \otimes 1_V
$
on 
\[
L^2(\p, \cE_\p \otimes \overline{S_\p} ) \otimes V.
\]  
By Lemma \ref{index pair},  this index 
equals $[V] \in \Rhat(K)$. 
\hfill $\square$

\medskip
Motivated by Proposition \ref{prop TGK}, we define a $K$-homological version of Dirac induction.
\begin{definition}
For any $V \in \hat K$,  the \emph{$K$-homological Dirac induction} of $V$ is the class
\[
\widehat{\DInd}_K^G(V):= \ind_G(\cE_V, \psi) \in KK(C_0(G/K)\rtimes G, \C).
\]
\end{definition}
Proposition \ref{prop TGK} implies that this is an isomorphism.
\begin{theorem}
$K$-homological  Dirac induction  defines an isomorphism
\[
\Rhat(K) \xrightarrow{\cong}  KK(C_0(G/K)\rtimes G, \C).
\]
\end{theorem}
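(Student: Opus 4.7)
The plan is to deduce this theorem essentially directly from Proposition \ref{prop TGK}. Recall that the Morita equivalence $C_0(G/K)\rtimes G \sim C^*K$ induces an isomorphism
\[
\Psi\colon KK(C_0(G/K)\rtimes G, \C) \xrightarrow{\cong} KK(C^*K, \C) = \Rhat(K),
\]
under which, for each $V \in \hat K$, the class $\Psi(\widehat{\DInd}_K^G(V)) = [V] \in \Rhat(K)$ by Proposition \ref{prop TGK}. So on the set $\hat K$ of irreducibles, the composition $\Psi \circ \widehat{\DInd}_K^G$ sends each $V$ to the corresponding basis element $[V]$ of $\Rhat(K)$.

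Next, I would extend $\widehat{\DInd}_K^G$, which is a priori defined only for irreducibles, to all of $\Rhat(K) = \bigoplus_{\pi \in \hat K} \Z\pi$. The only consistent such extension is the $\Z$-linear and direct-sum-continuous one, namely
\[
\widehat{\DInd}_K^G\Bigl( \bigoplus_{\pi \in \hat K} m_\pi \pi \Bigr) := \Psi^{-1}\Bigl( \bigoplus_{\pi \in \hat K} m_\pi [\pi] \Bigr).
\]
This extension agrees with the original definition on $\hat K$ by Proposition \ref{prop TGK}, and by construction is equal to $\Psi^{-1}$, hence is an isomorphism of abelian groups. Conceptually, $\{[V] : V \in \hat K\}$ is the canonical topological basis of $\Rhat(K)$, and Proposition \ref{prop TGK} says that the analytically defined classes $\widehat{\DInd}_K^G(V)$ are mapped precisely to this basis by $\Psi$, which immediately yields bijectivity of the extension.

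The only genuinely substantive point is the identification $\Psi(\widehat{\DInd}_K^G(V)) = [V]$, and this is exactly the content of Proposition \ref{prop TGK}, whose proof in turn rests on the induction theorem (Theorem \ref{thm induction}) together with the Bott periodicity computation in Lemma \ref{index pair}. Once that proposition is in hand, the theorem itself is a bookkeeping statement: there is no additional analytic work, only the observation that an isomorphism from $\Rhat(K)$ to $KK(C_0(G/K)\rtimes G, \C)$ is determined by its values on the generators $[V]$, and that these values are prescribed to be $\widehat{\DInd}_K^G(V)$.
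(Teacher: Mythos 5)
Your proposal is correct and follows essentially the same route as the paper: the paper's entire proof is the observation that Proposition \ref{prop TGK} identifies the classes $\widehat{\DInd}_K^G(V)$ with the basis elements $[V]$ of $\Rhat(K)$ under the Morita equivalence isomorphism, so the (linearly and direct-sum-continuously extended) map is forced to be that isomorphism's inverse. Your write-up merely spells out the bookkeeping that the paper leaves implicit, with all substantive content residing, as in the paper, in Proposition \ref{prop TGK} (via Theorem \ref{thm induction} and Lemma \ref{index pair}).
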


Of course,  the fact that $\Rhat(K) \cong  KK(C_0(G/K)\rtimes G, \C)$ is not new, or as deep as the Dirac induction isomorphism for $K$-theory. But
we found it interesting that this isomorphism can be described in terms of the $G$-index for a non-cocompact action. For example, this implies that any element of $KK(C_0(G/K)\rtimes G, \C)$ can be realised as a $G$-index for such an action. Furthermore, in this class of examples,  
 the image of $N = \kp$ under
 $\psi$ does not lie inside $\kk$. (Nor for any other choice of the map $M\to G/K$.) Examples with these properties are furthest removed from existing index theory.

\begin{remark}
Let $V = \C$ be the trivial representation of $K$. The map $\psi$ is a \emph{moment map} for the standard symplectic form on $T^*(G/K)$. Hence, following \cite{Zhang14, Paradan11}, one can interpret the $G$-index of $D_{f\psi}^{\C}$ as the \emph{geometric quantisation} $Q_G(T^*(G/K))$ of $T^*(G/K)$. By Propositions 2.10 in \cite{HochsSong16a} and Proposition \ref{prop TGK}, one then has
\[
Q_G(T^*(G/K)) = \bigl[ L^2(G/K), 0, \pi_{C_0(G/K)\rtimes G}\bigr] \quad \in KK(C_0(G/K)\rtimes G, \C).
\]
This looks natural, especially since the representation of $G$ in $L^2(G/K)$ is encoded in $\pi_{C_0(G/K)\rtimes G}$. However, one loses much of this information after applying the homotopy relation in $K$-homology.
 \end{remark}

\begin{example}
Consider the situation of this subsection where $G = \R$, but we allow more general equivariant maps $\psi\colon M\to \kg$ than the one used above. Then we obtain an explicit example of how the $G$-index can depend on the map $\psi$. 

In this case, the map $\psi\colon \R^2 \to \R$ is of the form  
\[
\psi(x, y) = \zeta(y)
\]
for a real-valued function $\zeta \in C^{\infty}(\R)$. The $G$-index of the Dirac operator deformed by $\psi$ now lies in
\[
KK( C_0(\R)\rtimes \R, \C) \cong KK(\C, \C) \cong \Z. 
\]
It equals the difference of the dimensions of the spaces of square-integrable functions that are scalar multiples of the functions $s_-$ and $s_+$ on $\R$, respectively, defined by
\[
s_{\pm}(y)=  e^{\pm\int^y_0 \zeta(t)dt},
\]
for $y\in \R$.

Suppose that the function $\zeta$ is nowhere vanishing over $\R$, for example, $\zeta>0$ on $\R$. Then neither $s_-$ nor $s_+$ will be square-integrable. So the $G$-index of the deformed Dirac operator
equals zero. Note that the induced vector field $v^\psi$ is nowhere vanishing over $M$ in this case. 
If $\zeta(y) = \pm y$, then $s_{\mp}$ is square-integrable, but $s_{\pm}$ is not. It follows that the $G$-index  of the deformed Dirac operator is equal to $\pm1$,  with kernel spanned by the function $y\mapsto e^{-\frac{y^2}{2}}$ (in even or odd degree, respectively).
\end{example}

\subsection{$\Spinc$-quantisation commutes with reduction} \label{sec [Q,R]=0}

In this subsection, we consider the special case where $D$ is a $\Spinc$-Dirac operator. Suppose $G$ is reductive. Suppose $M$ is even-dimensional and  has a $G$-equivariant $\Spinc$-structure. Let $\cE$ be the associated spinor bundle. Furthermore,
 that $G/K$ is even-dimensional and equivariantly $\Spin$.
As before, let $S_{\kp} \in R(K)$ be the $\Spin$ representation  of $\p$.
The Clifford module $\cE_N\to N$ as in \eqref{eq EN} is now the spinor bundle of a $K$-equivariant $\Spinc$-structure on $N$. (See Proposition 3.10 in \cite{Mathai14}.)

Let $L\to M$ be the determinant line bundle of the $\Spinc$-structure. Then $L_N := L|_N \to N$ is the determinant line bundle of the $\Spinc$-structure on $N$ whose spinor bundle is $\cE_N$. Let $\nabla^{L_N}$ be a $K$-invariant, Hermitian connection on $L_N$. It defines a \emph{$\Spinc$-moment map}
\[
\psi_N\colon N\to \kk^*
\]
by 
\[
2\sqrt{-1} \langle \psi_N, X\rangle = \calL^{L_N}_X - \nabla^{L_N}_{X_N} \quad \in \End(L_N) = C^{\infty}(N),
\]
for all $X\in \kk$. Here $\calL^{L_N}$ is the Lie derivative of sections of $L_N$. In Section 3.1 of \cite{Hochs09}, a $G$-invariant, Hermitian connection $\nabla^{L}$ on $L$ is constructed, for which the associated $\Spinc$-moment map $\psi\colon M\to \kg^*$ (defined analogously to $\psi_N$), restricts to $\psi_N$ on $N$.

Fix a $K$-invariant inner product $(\relbar, \relbar)^{\kg}$ on $\kg$. We use this to identify $\kk^* \cong \kk$, and hence to view $\psi_N$ as a map into $\kk$. Furthermore, consider the trivial vector bundle $M\times \kg \to M$, on which $G$ act as
\[
g\cdot (m, X) = (gm, \Ad(g)X),
\]
for $g\in G$, $m\in M$ and $X\in \kg$. We have a $G$-invariant metric on this bundle, defined by
\[
(X, Y)_{gn} := (\Ad(g^{-1})X, \Ad(g^{-1})Y)^{\kg},
\]
for $X, Y \in \kg$, $g\in G$ and $n\in N$. Using this to identify $M\times \kg^* \cong M\times \kg$, we view $\psi$ as a map into $\kg$.

Let $\rho, \rho_N \in C^{\infty}(M)^G$ be as in Theorems \ref{thm def Dirac G Fred} and \ref{thm induction}. Suppose $f\in C^{\infty}(M)^G$ is $\max(\rho, \rho_N)$-admissible. Consider the deformed Dirac operator $D_{f\psi}$. We will see that the image of its $G$-index in $\Rhat(K)$ decomposes into irreducible representations according to the quantisation commutes with reduction principle. In the $\Spinc$-setting, this principle was first proved for compact groups and manifolds in \cite{Paradan14}. This was generalised to noncompact manifolds, but still compact groups, in \cite{HochsSong15}.

Let us make this precise.
For any $\lambda, \nu$ in the set $\Lambda_+$ of dominant weights  used before, let $n^{\nu}_{\lambda}$ be the nonnegative integers such that
\[
V_{\lambda} \otimes S_{\kp} = \bigoplus_{\nu\in \Lambda_+} n^{\nu}_{\lambda} V_{\nu}.
\]
For $\lambda \in \Lambda_+$, let 
\[
M_{\lambda} := \psi^{-1}(\lambda/\ii)/G_{\lambda},
\]
be the reduced space at $\lambda$,
where $G_{\lambda}< G$ is the stabiliser of $\lambda$ with respect to the coadjoint action. Since $G$ is reductive, we have
\[
M_{\lambda} = N_{\lambda} := \psi_N^{-1}(\lambda/\ii)/K_{\lambda}.
\]
(See Proposition 3.13 in \cite{Mathai14}.)

Suppose $\psi$ is $G$-proper, in the sense that the inverse image of a cocompact set is cocompact. Then $M_{\lambda}$ is compact.
This space has a $\Spinc$-quantisation
\[
Q(N_{\lambda}) \in \Z,
\]
as defined in Section 5 of \cite{Paradan14}. In the sufficiently regular case, this is the index of a $\Spinc$-Dirac operator with respect to a $\Spinc$-structure induced by the one on $N$. Then this index equals the index of a corresponding $\Spinc$-Dirac operator on $M_{\lambda}$ (see Proposition 3.14 in \cite{Mathai14}). It therefore makes sense to define
\[
Q(M_{\lambda}) := Q(N_{\lambda}).
\]
\begin{corollary}[$\Spinc$-quantisation commutes with reduction]\label{thm [Q,R]=0} 
Suppose that $G$ is reductive, and that $G/K$ is even-dimensional and equivariantly $\Spin$.
Consider the multiplicities $m_{\lambda} \in \Z$ in 
\[
\ind_G(\cE, \psi) = \sum_{\lambda \in \Lambda_+} m_{\lambda}V_{\lambda} \quad \in \Rhat(K)\cong KK(C_0(G/K)\rtimes G, \C) .
\]
If the action by $G$ on $M$ has Abelian stabilisers on an open dense subset of $M$, then for all $\lambda \in \Lambda_+$,
\[
m_{\lambda} = \sum_{\nu\in \Lambda_+} n^{\nu}_{\lambda} Q(M_{\nu + \rho_K}).
\] 
In general, without an assumption on the stabilisers, we have
\[
m_{\lambda} = \sum_{\nu\in \Lambda_+} n^{\nu}_{\lambda} \Bigl( \sum_{j=1}^{k_{\lambda}} Q(M_{\nu + \rho_j}) \Bigr),
\]
for a finite set $\{\rho_1, \ldots, \rho_{k_{\lambda}}\} \subset i\kt^*$, as specified in Theorem 1.4 in \cite{Paradan14}.
\end{corollary}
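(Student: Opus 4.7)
The strategy is to reduce the statement to the compact-group $[Q,R]=0$ theorems via the induction result Corollary \ref{cor induction psi N k}. The first step is to verify that this corollary applies. By the construction of the connection $\nabla^L$ in Section 3.1 of \cite{Hochs09}, the $\Spinc$-moment map $\psi$ restricts on $N$ to $\psi_N$, which takes values in $\kk^*\cong \kk$; hence $\psi(N)\subset \kk$. By Proposition 3.10 in \cite{Mathai14}, the Clifford module $\cE_N$ from \eqref{eq EN} is the spinor bundle of an induced $K$-equivariant $\Spinc$-structure on $N$ with determinant line bundle $L_N$, and $D^{\cE_N}_{f\psi|_N}$ is precisely the associated Braverman-deformed $\Spinc$-Dirac operator. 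With the hypotheses that $G/K$ is even-dimensional and equivariantly $\Spin$, Corollary \ref{cor induction psi N k} applies and gives
\[
\ind_G(\cE, \psi) \;=\; \bigl(\ind_K(D^{\cE_N}_{f\psi|_N})\bigr) \otimes S_\kp \quad \in \Rhat(K).
\]

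The second step is to apply the existing compact-group $\Spinc$-quantisation commutes with reduction theorem to the $K$-index on $N$. When the $K$-action on $N$ has Abelian stabilisers on a dense open subset, the main theorem of \cite{HochsSong15} directly yields
\[
\ind_K(D^{\cE_N}_{f\psi|_N}) \;=\; \sum_{\nu\in\Lambda_+} Q(N_{\nu+\rho_K})\,V_\nu;
\]
in the general case, the same result combined with the shift-desingularisation in Theorem 1.4 of \cite{Paradan14} produces the analogous formula with the sum over $\rho_j$. The reductive assumption on $G$, via Proposition 3.13 of \cite{Mathai14}, gives the identification $N_\mu = M_\mu$ of reduced spaces, and $Q(M_\mu)$ is defined to equal $Q(N_\mu)$, so these are exactly the quantities appearing in the statement.

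The third step is a bookkeeping combination. Substituting the formula for the $K$-index and expanding $V_\nu \otimes S_\kp = \bigoplus_\lambda n^\lambda_\nu V_\lambda$, collecting the coefficient of $V_\lambda$ gives $m_\lambda = \sum_\nu n^\lambda_\nu Q(M_{\nu+\rho_K})$. This matches the stated form via the symmetry $n^\nu_\lambda = n^\lambda_\nu$, which follows from the (graded) self-duality of $[S_\kp]$ in $R(K)$ and Frobenius reciprocity. The main obstacle I anticipate is not analytic — the hard index-theoretic content is already packaged in Corollary \ref{cor induction psi N k} and the compact $[Q,R]=0$ theorem — but geometric bookkeeping: I would need to check carefully that the $\Spinc$-structure, Clifford connection, and determinant line bundle on $N$ induced from those on $M$ are precisely those for which the compact $[Q,R]=0$ theorem was formulated, and that the moment map driving the deformation on the slice is genuinely $\psi_N$, so that $Q(N_\nu)$ computed on the slice equals $Q(M_\nu)$ as the $\Spinc$-quantisation of the reduced space.
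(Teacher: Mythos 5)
Your proposal is correct and follows essentially the same route as the paper: establish $\psi(N)\subset\kk$, apply Corollary \ref{cor induction psi N k} to reduce to $\ind_K(D^{\cE_N}_{f\psi})\otimes S_{\kp}$, invoke the compact-group $\Spinc$-$[Q,R]=0$ theorem (Theorem 3.9 of \cite{HochsSong15}, with \cite{Paradan14} for the general case) on the slice, and sort out the $S_{\kp}$-tensoring using $S_{\kp}^*=S_{\kp}$ — exactly the self-duality fact the paper also uses for the multiplicity bookkeeping. The geometric identifications you flag (induced $\Spinc$-structure on $N$, restriction of the moment map, $M_\mu=N_\mu$) are the same inputs from \cite{Hochs09, Mathai14} that the paper assembles before stating the corollary.
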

\begin{proof} We saw that in this case, $\psi(N)\subset \kk$. Therefore, 
Corollary \ref{cor induction psi N k} implies that
\[
\begin{split}
m_{\lambda} &= \dim \bigl(\ind_K D^{\cE_N}_{f\psi} \otimes S_{\kp} \otimes V_{\lambda}^* \bigr)^K \\
 &= \sum_{\nu\in \Lambda_+} n^{\nu}_{\lambda} \dim \bigl(\ind_K D^{\cE_N}_{f\psi} \otimes  V_{\nu}^* \bigr)^K.
\end{split}
\]
Here we used the fact that $S_{\kp}^* = S_{\kp}$. By Theorem 3.9 in \cite{HochsSong15}, the multiplicity
\[
 \dim \bigl(\ind_K D^{\cE_N}_{f\psi} \otimes  V_{\nu}^* \bigr)^K
\]
is given by the desired expression.
\end{proof}

\begin{remark}
We always assumed that $\Zeroes(v^{\psi})/G$ was compact. In \cite{HochsSong15}, however, it was explained how to handle the case where this condition does not hold, if $G$ is compact. Via Corollary \ref{cor induction psi N k}, the same methods apply to noncompact $G$. Therefore, there is still a well-defined index, and Corollary \ref{thm [Q,R]=0} still holds, if $\Zeroes(v^{\psi})/G$ is noncompact. It is still essential that $\psi$ is $G$-proper.
\end{remark}

\appendix

\section{Notation and conventions} \label{app not}

\begin{itemize}
\item If $X$ is a locally compact Hausdorff space, then $C(X)$ is the algebra of continuous, complex valued functions on $X$. We will mention explicitly where we assume a function to be real-valued. We write $C_b(X)$, $C_0(X)$ and $C_c(X)$ for the algebras of bounded functions, functions vanishing at infinity, and compactly supported functions in $C(X)$, respectively. The sup-norm of a function $f \in C_b(X)$ is denoted by $\|f\|_{\infty}$.
\item
If $\cE\to X$ is a vector bundle (tacitly assumed to be continuous), then $\Gamma(\cE)$ is the space of continuous sections of $\cE$, and $\Gamma_c(\cE)\subset \Gamma(\cE)$ is the subspace of compactly supported sections. If $\cE$ is equipped with a metric, then $\Gamma_0(\cE)$ is the space of continuous sections vanishing at infinity.
If, in addition, 
 $X$ is equipped with a Borel measure, then $L^2(\cE)$ is the space of $L^2$-sections of $\cE$. If $\cE'\to X'$ is another vector bundle, then $\cE \boxtimes \cE' \to X\times X'$ is the exterior tensor product of the two.
\item Analogously, if $M$ is a smooth manifold, we have the algebras $C^{\infty}(M)$, $C^{\infty}_b(M)$, $C^{\infty}_0(M)$ and $C^{\infty}_c(M)$ of smooth, complex valued functions on $M$ with the various growth/decay properties towards infinity. The tangent bundle projection of $M$ is denoted by $\tau_M\colon TM\to M$. The space of smooth vector fields on $M$ is denoted by $\cX(M)$. The set of zeroes of a vector field $v\in \cX(M)$ is denoted by $\Zeroes(v)$.
 If $\cE\to M$ is a (smooth) vector bundle, then we have the space $\Gamma^{\infty}(\cE)$ of smooth sections, and its subspace $\Gamma^{\infty}_c(\cE)$ of compactly supported sections. 
\item If $M$ has a Riemannian metric, we will use it to identify $T^*M \cong TM$. We denote the Levi--Civita connection on $TM$ by $\nabla^{TM}$.
If, in addition,  $\cE$ has a metric, the space $L^2(\cE)$ will be defined with respect to the Riemannian density. 
\end{itemize}



\begin{small}

\bibliographystyle{plain}
\bibliography{mybib}

\end{small}

\end{document}